\newcommand{\adef}{\begin{defin}}
\newcommand{\zdef}{\end{defin}}
\newtheorem{defin}{Definition}
\newcommand{\ga}{\gamma}
\newcommand{\eps}{\varepsilon}
\newcommand{\n}{\|\cdot\|}
\newcommand{\wh}{\widehat}
\newcommand{\fin}{\operatorname{fin}}
\newcommand{\dist}{\operatorname{dist}}
\newcommand{\Ext}{\operatorname{Ext}}
\newcommand{\supp}{\operatorname{supp}}
\newcommand{\F}{\mathscr F}
\newcommand{\e}{\epsilon}
\newtheorem{thm}{Theorem}[section]
\newtheorem{cor}[thm]{Corollary}
\newtheorem{lem}[thm]{Lemma}
\newtheorem{prop}[thm]{Proposition}
\newtheorem{prob}[thm]{Problem}
\theoremstyle{definition}
\newcommand{\R}{\mathbb R}
\newcommand{\Z}{\mathbb Z}
\newcommand{\N}{\mathbb N}
\newcommand{\T}{\mathbb{T}}
\newcommand{\PO}{\mathrm{PO}}
\newcommand{\PB}{\mathrm{PB}}
\newcommand{\arrow}[1]{\overset{#1}{\longrightarrow}}
\theoremstyle{remark}
\newcommand\restr[2]{{
		\left.\kern-\nulldelimiterspace 
		#1 
		\right|_{#2} 
}}
\newcommand{\To}{\longrightarrow}
\newcommand{\mar}[1]{\marginpar{\tiny{#1}}}
\numberwithin{equation}{section}
\title{When Kalton and Peck met Fourier}
\begin{document}

\author{F\'elix Cabello S\'anchez}
\address{Instituto de Matem\'aticas\\ Universidad de Extremadura\\
Avenida de Elvas\\ 06071-Badajoz\\ Spain} \email{fcabello@unex.es, salgueroalarcon@unex.es}


\author{Alberto Salguero-Alarc\'on}

\thanks{This research has been supported in part by project MTM2016-76958-C2-1-P and Project IB16056 de la Junta de Extremadura. The second author is supported by an FPU grant 2019 by the Spanish Ministry of Education, Science and Universities.}

\subjclass[2010]{46B03, 46M40}

{\footnotesize{\today}}

\bigskip

\maketitle

\begin{abstract} The paper studies short exact sequences of Banach modules over the convolution algebra $L_1=L_1(G)$, where $G$ is a compact abelian group. The main tool is the notion of a nonlinear $L_1$-centralizer, which in combination with the Fourier transform, is used to produce sequences of $L_1$-modules $0\To L_q\To Z\To L_p\To 0$ that are nontrivial as long as the general theory allows it, namely for $p\in (1,\infty], q\in[1,\infty)$.
Concrete examples are worked in detail for the circle group, with applications to the Hardy classes, and the Cantor group.
\end{abstract}

\section{Introduction}

Most honest Banach spaces carry one or several module structures over a selected group of algebras that reflect the existence of a good supply of natural symmetries.
This applies especially to the Banach spaces of analysis.

Focusing on commutative algebras, ``pointwise'' $L_\infty$-module  
structures are often related to ``unconditionality'', while module structures over convolution algebras $L_1$ appear in connection with translation-invariant properties. The Fourier transform provides a useful mechanism entwinning these properties in function spaces on locally compact abelian groups and their dual groups.

While the homomorphisms of $L_\infty$-modules and $L_1$-modules between function spaces have attracted, under the name of multipliers, a considerable attention in the past (see for instance \cite{mali} and \cite[Section 2.5]{graf}), not much is known on the corresponding extensions. 

An extension of $X$ by $Y$ in the category of Banach modules over $A$ is a short exact sequence of Banach $A$-modules and homomorphisms $0\To Y\To Z\To X\To 0$. These are our main object of study. The first difficulty one faces when dealing with such extensions is the lack of examples, which is due to the fact that even for the simplest choices of $X$ and $Y$ the possible middle objects $Z$ are not ``pre-existing'' $A$-modules.  

Kalton introduced in \cite{k-comm} the notion of a {nonlinear centralizer} for $L_\infty$-modules and effectively used it to construct a wide variety of exact sequences of $L_\infty$-modules  
$0\To L_p\To Z\To L_p\To 0$. In \cite{c-ma} it is shown that every extension of $L_\infty$ modules $0\To L_q\To Z\To L_p\To 0$ is trivial if $p\neq q$ (so that $Z$ is naturally isomorphic to $L_q\oplus L_p$) and that the space of ``self-extensions'' of $L_p$ is basically independent of $p<\infty$.
Roughly speaking, a centralizer between two Banach $A$-modules is a mapping that ``almost intertwines'' their module structures, that is, the sums and the actions of $A$; see Section~\ref{sec:quasi} for the definition.

Replacing $L_\infty$ by the convolution algebra $L_1$ leads to {\em terra incognita}.
This paper makes the first steps in the study of extensions of $L_1$-modules. We provide explicit examples of extensions of Banach $L_1$-modules $0\To L_q\To Z\To L_p\To 0$ that are nontrivial as long as the general theory allows it, namely for $p\in (1,\infty], q\in[1,\infty)$. 
These are constructed and studied by means of $L_1$-centralizers $\mho: L_p\To L_q$ which in turn are obtained from the classical Kalton-Peck maps $\Omega$
for sequence spaces by means of the Fourier tranform.
In this way the (good) behavior of $\Omega$ with respect to the pointwise product ensures that $\mho$ almost commutes with the product of convolution and makes it an $L_1$-centralizer.
Concrete examples and applications are worked in detail for the circle group and the Cantor group.

Most of the paper deals with the case $p=\infty, q=1$ which is, by far, the most difficult and interesting one. Indeed,
a second motivation for our work has been the understanding of certain twisted sums of Banach spaces of the form $0\To L_1(\mu)\To Z\To C(K)\To 0$. All known depend one way or another on the existence of ``twisted Hilbert spaces'', and ours are not exceptions. This is quite enigmatic since the $\mathscr{L}_1$-spaces are the less injective Banach spaces there are, while those of type $\mathscr{L}_\infty$ are the less projective ones. In the end this could be just the tip of the iceberg of a quasilinear version of Grothendieck factorization theorem, as explained in Section~\ref{sec:CT}. 

\section{Preliminaries}

\subsection{Algebraic setup}
Let $A$ be a complex Banach algebra. A quasinormed module over $A$ is a quasinormed space $X$ with a continuous multiplication $A \times X \to X$ which defines a structure of $A$-module in the algebraic sense. If the space $X$ is complete, then it is called a quasi Banach module. A homomorphism between quasi Banach $A$-modules is an operator $T: X\To Y$  such that $T(ax) = aT(x)$ for every $a\in A$ and every $x\in X$. Operators and homomorphisms are always assumed to be continuous (i.e., bounded), otherwise we speak of linear maps and \emph{morphisms}.

An extension of $X$ by $Y$ is a short exact sequence of modules and homomorphisms
\begin{equation}\label{eq:yzx}
\begin{tikzcd}
0 \arrow[r] & Y \arrow[r, "\imath "] & Z \arrow[r, "\pi "] & X \arrow[r] & 0
\end{tikzcd}
\end{equation}
The open mapping theorem guarantees that $\imath$ embeds $Y$ as a closed submodule of $Z$ in such a way that $\pi$ induces an isomorphism between  $Z/\imath[Y]$ and  $X$. We say that $0\To Y\To
Z'\To X\To 0$ is {\em equivalent} to (\ref{eq:yzx}) if there is a homomorphism $u$ making commutative the diagram
$$\xymatrix{
0\ar[r] & Y \ar@{=}[d]  \ar[r] & Z  \ar[d]^u \ar[r] & X\ar[r]\ar@{=}[d] &0\\
0\ar[r] & Y \ar[r] & Z' \ar[r] & X\ar[r] &0
}
$$
The five-lemma and the open mapping theorem guarantee that such a $u$ is an isomorphism, thus revealing that it is a true relation of equivalence. 
 We say that (\ref{eq:yzx}) splits if there is a homomorphism $P:Z\To Y$ such that $P\imath={\bf I}_Y$; equivalently, if there is a homomorphism $S:X\To Z$ such that $\pi S={\bf I}_X$. 
This happens if and only if  (\ref{eq:yzx}) is {\em trivial}, that is,  equivalent to the direct sum extension $0\To Y\To Y\oplus X\To X\To 0$ with the obvious homomorphisms.

We denote by $\Ext_A(X,Y)$ the set of equivalence classes of extensions of $X$ by $Y$. By using pullbacks and pushouts this set can be given a linear structure whose zero element is the class of trivial sequences. Thus,  $\Ext_A(X,Y)=0$ 
 means that each extension of $A$-modules (\ref{eq:yzx}) is trivial.  Taking $A=\mathbb C$ one recovers extensions of quasi Banach spaces. This leads to a delicate point: every extension of modules is also an extension of quasi Banach spaces which could split in the linear category but not necessarily as an extension of modules, because the splitting operators need not to be homomorphisms. This is connected to the (Johnson) amenability of $A$, a major theme in the homology of Banach algebras. Indeed, $A$ is amenable if and only if each extension of (quasi) Banach $A$-modules (\ref{eq:yzx}) in which $Y$ is a dual module which splits as an extension of (quasi) Banach spaces also splits as an extension of $A$-modules; see for instance Runde \cite[Theorem 2.3.21]{runde}.

The (left) $A$-module structure of the dual of a (right) $A$-module $V$ is defined by
$
\langle av^*, v\rangle=  \langle v^*, va\rangle
$
for $a\in A,v^*\in V^*, v\in V$.
An important result of Johnson \cite[Theorem 2.1.10]{runde} states that a locally compact group $G$ is amenable (in the sense that $L_\infty(G)$ has an invariant mean) if and only if the convolution algebra $L_1(G)$ is amenable. Since {\em both} compact {\em and} abelian groups are amenable (the Haar measure is an invariant mean) the convolution algebra $L_1(G)$ is {\em overwhelmingly} amenable when $G$ is a compact abelian group.

And speaking about delicate points let us explain why we risk losing most of our  potential readers by considering quasinorms instead of the more popular norms. Our constructions are invariably Banach spaces and modules whose topology is described by a quasinorm which, in the end, is equivalent to a norm. However, this is not automatic: there exist exact sequences of quasi Banach spaces (\ref{eq:yzx}) in which $X$ and $Y$ are Banach spaces but $Z$ is not. That cannot happen if $X$ is either $L_p(\mu)$ for $1<p\leq\infty$ or a $C(K)$ space because the $\mathscr L_p$-spaces are $\mathscr K$-spaces for those values of $p$. See Section~\ref{sec:quasi}. To be true, only the fact that Hilbert spaces are $\mathscr K\!$-spaces is necessary here because our constructions are pullbacks/pushouts of self-extensions of Hilbert spaces. So there is no cause for concern here.



\subsection{Pushout and pullback}\label{sec:POPB}  In particular, given an extension (\ref{eq:yzx}) and homomorphisms $v:Y\To Y'$ and $v:X'\To X$ there are new extensions forming commutative diagrams
$$
\xymatrixrowsep{0.5pc}
\xymatrix{
0\ar[r] & Y \ar[dd]_v \ar[r]^-\imath &Z \ar[dd]_V \ar[r]^-\pi &X \ar@{=}[dd]  \ar[r] & 0 && 0\ar[r] & Y \ar[r]^-\imath &Z \ar[r]^-\pi &X  \ar[r] & 0\\
&&&&&\text{and}\\
0\ar[r] & Y' \ar[r]^-I &\PO \ar[r]^-\Pi &X  \ar[r] & 0 && 0\ar[r] & Y   \ar@{=}[uu]\ar[r]^-I &\PB \ar[uu]^U \ar[r]^-\Pi &X' \ar[uu]^u  \ar[r] & 0
}
$$
In the diagram on the left, the {\em pushout} space (actually $A$-module) $\PO$ is the quotient
of the direct sum $Y'\oplus Z$ by the (closed) submodule $\{(v(y),-\imath(y)): y\in Y\}$, the maps $I,V$ are the corresponding inclusions followed by the quotient map, and $\Pi$ arises from the factorization of the ``obvious'' homomorphism $(y',z)\longmapsto \pi(z)$. The pushout sequence splits if and only if $v$ extends to $Z$ (there is a homomorphism $\tilde{v}:Z\To Y'$ such that $v=\tilde{v}\imath$).

In the diagram on the right, the {\em pullback} space is $\PB=\{(z,x')\in Z\times X': \pi(z)=u(x')\}$ and the homomorphisms $U, \Pi$ are the restrictions of the projections of  $Z\times X'$ onto $Z$ and $X'$ respectively. The pullback sequence splits if and only if $u$ admits a lifting to $Z$ (there is a homomorphism $\tilde{u}: X'\To X$ such that $u=  \pi\tilde{u})$. 

The {\em explicit} use of these  constructions is marginal in this paper, although they are implicit in most arguments. In any case we remark that is the existence of pullback and pushouts what makes the assingnment $X,Y\rightsquigarrow\Ext_A(X,Y)$ into a functor. 

\subsection{Harmonic setup}

Throughout the paper $G$ denotes a compact abelian group with dual group $\Gamma$. The integral of a function $f:G\To\mathbb C$ with respect to the (normalized) Haar measure of $G$ is denoted by $\int_Gf(x)dx$ and the translation of $f$ along $y\in G$ is defined as $f_y(x)=f(xy^{-1})$. The same definition applies to functions defined on $\Gamma$. As usual, for $0<p<\infty$, we denote by $L_p(G)$ the space of (complex) $p$-integrable functions on $G$ and we identify functions that agree almost everywhere. $L_\infty(G)$ is the space of essentially bounded (measurable) functions with the essential supremum norm. It is clear that $L_\infty(G)$ is a Banach algebra with the pointwise operations and each $L_p(G)$ space is $L_\infty(G)$-module.

We denote by $C(G)$ the algebra of all continuous functions on $G$, again with the sup norm and the pointwise operations; $M(G)$ denotes the space of (bounded, complex, regular, Borel) measures on $G$ normed by the total variation. By the Riesz representation theorem the space of measures on $G$ can be identified with the dual of $C(G)$ through the pairing $\langle \mu,f\rangle=\int_Gf(x)d\mu(x)$. In particular, each $f\in L_1$ defines a measure on $G$ by the formula $A\longmapsto\int_Af(x)dx$ which allows us to consider $L_1(G)$ as a subspace of $M(G)$ when appropriate.

We denote by $P(G)$ the ``polynomials'' on $G$, that is, those functions that arise as linear combinations of characters. While $P(G)$ is dense in $L_p(G)$ for finite $p$, its closure in $L_\infty(G)$ is exactly $C(G)$, by the Stone-Weierstra\ss\ theorem. If $X$ is a space of functions on $G$, we set $X^0=X\cap P(G)$, with the norm inherited from $X$.


The space $M(G)$ is a Banach algebra under the convolution of measures given by $\mu*\nu(A)=\int_G\mu(Ay^{-1})d\nu(y)
$, where $Ay^{-1}=\{xy^{-1}: x\in A\}$. This extends the usual convolution for integrable functions, defined by 
 $(f*g)(x)=\int_G f(xy^{-1})g(y)dy$, in such a way that $L_1(G)$ is an ideal in $M(G)$. 
The spaces $L_p(G)$ for $1\leq p\le\infty$ and $C(G)$ are $L_1(G)$-modules (actually $M(G)$-modules) under convolution; see for instance Rudin \cite[Section 1.1]{rudin}.

We denote by $\F$ the Fourier transform from $G$-objects to $\Gamma$-objects, not the other way around. The largest space where $\F$ is ``naturally'' defined is $M(G)$ and for each measure $\mu$ the Fourier transform $\F(\mu)=\wh \mu$ is the bounded function defined on $\Gamma$ by the formula $\wh\mu(\gamma)=\int_G\overline\gamma(x) d\mu(x)$. The Fourier transform of $f\in L_1(G)$ is defined by $\wh f(\gamma)=\int_Gf(x) \overline\gamma(x) dx$. Quite clearly, $\wh f_y= y\wh f$, with the meaning that $(\wh f_y)(\gamma)=\gamma(y)\wh f(\gamma)$.

\subsection{Quasilinear maps and centralizers}\label{sec:quasi}
Let $X, Y$ be quasinormed spaces. A homogeneous mapping $\Phi:X\To Y$ is said to be quasilinear if $\|\Phi(x+x')-\Phi(x)-\Phi(x')\|\leq Q\big(\|x\|+\|x'\|\big)$ for some constant $Q$ and all $x,x'\in X$. If, in addition, $X,Y$ are modules over a Banach algebra $A$ we say that $\Phi$ is an $A$-centralizer if it is quasilinear and there is a constant $C$ so that $\|\Phi(ax)- a\Phi(x)\|\leq C\|a\|\|x\|$ for all $a\in A$ and all $x\in X$. Quasilinear maps are nothing else than $\mathbb C$-centralizers.

Our interest in quasilinear maps lies on the following construction. If $\Phi:X\To Y$ is a quasilinear map then the functional $\|(y,x)\|_\Phi=\|y-\Phi(x)\|+\|x\|$ is a quasinorm on $Y\times X$ and, if we denote by $Y\oplus_\Phi X$ the resulting quasinormed space, then the sequence
$$
\xymatrix{
0\ar[r] & Y \ar[r]^-\imath & Y\oplus_\Phi X \ar[r]^-\pi &X  \ar[r] & 0
}
$$
where $\imath(y)=(y,0)$ and $\pi(y,x)=x$ is exact. What is more: $\imath$ preserves the involved quasinorms and $\pi$ maps the unit ball of $ Y\oplus_\Phi X$
onto that of $X$. Now, we have:

\begin{lem}\label{lem:modiffcen}
A quasilinear map $\Phi: X\To Y$ is an $A$-centralizer if and only if the outer product $a(y,x)=(ay,ax)$ makes $Y\oplus_\Phi X$ into a quasinormed module.
\end{lem}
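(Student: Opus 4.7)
The plan is to split the equivalence into two implications, both of which reduce to a single computation involving the quantity $a\Phi(x)-\Phi(ax)$. Since the module axioms for the outer product $a(y,x)=(ay,ax)$ are inherited coordinatewise from the $A$-module structures of $Y$ and $X$, the only nontrivial content of the statement is the continuity of the action $A\times(Y\oplus_\Phi X)\to Y\oplus_\Phi X$, i.e.\ the existence of a constant $K$ such that $\|(ay,ax)\|_\Phi\le K\|a\|\|(y,x)\|_\Phi$ for every $a\in A$ and every $(y,x)\in Y\oplus X$.

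For the implication from centralizer to module, I would start from the definition of the $\Phi$-quasinorm and insert $a\Phi(x)$ as an intermediate term:
\[
\|(ay,ax)\|_\Phi=\|ay-\Phi(ax)\|+\|ax\|\le\|a\|\,\|y-\Phi(x)\|+\|a\Phi(x)-\Phi(ax)\|+\|a\|\,\|x\|.
\]
If $\Phi$ is an $A$-centralizer with constant $C$, the middle term is bounded by $C\|a\|\|x\|$, so the whole expression is at most $\|a\|\bigl(\|y-\Phi(x)\|+(C+1)\|x\|\bigr)\le(C+1)\|a\|\|(y,x)\|_\Phi$, which gives continuity with $K=C+1$.

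For the converse, the key trick is to probe the inequality $\|(ay,ax)\|_\Phi\le K\|a\|\|(y,x)\|_\Phi$ with the distinguished pair $(y,x)=(\Phi(x),x)$, whose $\Phi$-quasinorm collapses to $\|x\|$. This yields
\[
\|a\Phi(x)-\Phi(ax)\|+\|ax\|=\|(a\Phi(x),ax)\|_\Phi\le K\|a\|\|x\|,
\]
from which $\|a\Phi(x)-\Phi(ax)\|\le K\|a\|\|x\|$ drops out immediately, so $\Phi$ is an $A$-centralizer (the quasilinearity needed to make sense of $Y\oplus_\Phi X$ as a quasinormed space is given by hypothesis).

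I do not anticipate a serious obstacle: the algebraic module axioms are free, the two estimates are symmetric reformulations of each other via the triangle inequality, and the judicious choice $y=\Phi(x)$ converts module continuity back into the centralizer inequality in one line. The only mild subtlety is that ``quasinormed module'' requires joint continuity of the action, but this reduces to a homogeneous estimate of the above form because the outer product is bilinear.
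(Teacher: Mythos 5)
Your proof is correct and follows essentially the same route as the paper's: the forward direction inserts $a\Phi(x)$ as an intermediate term, and the converse probes the module inequality at the pair $(\Phi(x),x)$, whose $\Phi$-quasinorm collapses to $\|x\|$. The only cosmetic difference is that the paper keeps the two summands $\|ay-a\Phi(x)\|$ and $\|a\Phi(x)-\Phi(ax)\|$ under a single norm before estimating, which is immaterial.
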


\begin{proof}
If $\Phi$ is a centralizer, then
$$
\|(ay,ax)\|_\Phi= \|ay-\Phi(ax)\|+\|ax\| \leq  \|ay-a\Phi(x)+ a\Phi(x)-\Phi(ax)\|+\|ax\|\leq M \|a\|\|(y,x)\|_\Phi.
$$
For the converse, if $
\|(ay,ax)\|_\Phi\leq M
\|a\|\|(y,x)\|_\Phi$ for some $M$ and all $(y,x)\in Y\oplus_\Phi X$, then  letting $y=\Phi(x)$ one obtains $\|a\Phi(x)-\Phi(ax)\|+\|ax\|\leq M\|a\|\|x\|$, and so $\Phi$ is a centralizer.
\end{proof}

In practice one cannot expect to explicitly define centralizers on a quasi Banach module, at least if we insist in that they take values in $Y$. So assume $X, Y$ are quasi Banach modules and that $X_0$ is a dense submodule of $X$. Let $\Phi:X_0\To Y$ be a centralizer and construct the ``twisted sum'' $Y\oplus_\Phi X_0$ as before. Let $Z(\Phi)$ be the completion of  $Y\oplus_\Phi X_0$ (which is a quasi Banach $A$-module) and observe that the universal property of the completion provides a commutative diagram
$$
\xymatrix{
0\ar[r] & Y \ar@{=}[d] \ar[r]^-\imath & Y\oplus_\Phi X_0  \ar[d]^{\text{inclusion}}\ar[r]^-\pi &X_0  \ar[r]\ar[d]^{\text{inclusion}} & 0\\
0\ar[r] & Y \ar[r]^-\jmath & Z(\Phi) \ar[r]^-\varpi &X  \ar[r] & 0
}
$$
in which the lower row is an extension of $X$ by $Y$ called, with good reason, the extension generated by $\Phi$. These extensions admit a very simple analysis: $\Phi$ generates a trivial extension if and only if it there exists a (not necessarily continuous) morphism $\phi:X_0\To Y$ such that $\|\Phi(x)-\phi(x)\|\leq K\|x\|$ for some $K$ and all $x\in X_0$. In this case we say that $\Phi$ is a trivial centralizer. Also $\Phi, \Psi:X_0\To Y$ generate equivalent extensions if and only if $\Phi-\Psi$ is trivial.

A quasilinear map $\Phi: X \To Y$ acting between Banach spaces gives rise to a Banach space if and only if it obeys an estimate of the form $\|\Phi\big(\sum_{i\leq n}x_i\big)-\sum_{i\leq n}\Phi(x_i)\|\leq M\sum_{i\leq n}\|x_i\|$ for some $M$, all $n\in \N$ and all $x_1,\dots,x_n\in X$. Each Banach  $\mathscr K\!$-space $X$ comes with a constant $K=K(X)$ so that every quasilinear map $\Phi$ defined on a dense subspace of $X$ satisfies the above estimate with $M=K(X)Q(\Phi)$.
One has $K(X)\leq 37$ when $X$ is a Hilbert space and $K(X)\leq 200\lambda$ when $X$ is an $\mathscr L_{\infty\lambda}$-space. In particular $K(C(S))\leq 200$ for each compact space $S$; see \cite{k,k-r}.

\subsection{Existence of $L_1$-centralizers}
We now particularize to modules on the convolution algebra $L_1(G)$. We denote by $*$ the action on any $L_1(G)$-module, even if the action might have nothing to do with convolution. The following simple remark justifies our approach in the search for extensions of $L_1$-modules:

\begin{lem}\label{lem:all}
Let $1\leq p<\infty$. 
Every extension of $L_p$  by an arbitrary $L_1$-module $Y$ is equivalent to one induced by an $L_1$-centralizer $\Phi:L_p^0 \To Y$. Every extension of $C(G)$ by $Y$ is equivalent to one induced by an $L_1$-centralizer $\Phi:L_\infty^0 \To Y$.
\end{lem}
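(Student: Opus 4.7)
My plan is to convert an arbitrary extension $0 \to Y \to Z \to X \to 0$ with $X = L_p$ ($1 \leq p < \infty$) or $X = C(G)$ into an equivalent one generated by a centralizer defined on the dense submodule of trigonometric polynomials $X^0 = P(G)$. The method combines two sections of the quotient map $\pi$: a bounded homogeneous section on all of $X$ and a linear $L_1$-morphism section defined only on polynomials; their difference will be the desired centralizer.

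First, by the open mapping theorem, $\pi$ maps the unit ball of $Z$ onto a neighborhood of $0$ in $X$, so (after homogenizing) one can choose a bounded homogeneous section $B : X \to Z$ with $\|B(x)\| \leq K \|x\|$. The crucial step is to construct a (discontinuous) linear $L_1$-morphism $L : X^0 \to Z$ with $\pi L = \operatorname{id}_{X^0}$. Here I exploit the fact that every character $\gamma \in \Gamma$ is an \emph{idempotent} for convolution, $\gamma * \gamma = \gamma$ (an immediate computation from character orthogonality on compact abelian $G$): for any lift $z_\gamma$ of $\gamma$ in $Z$, the element $L(\gamma) := \gamma * z_\gamma$ is still a lift and automatically satisfies
$$
f * L(\gamma) = (f * \gamma) * z_\gamma = \widehat{f}(\gamma)\, L(\gamma) \qquad (f \in L_1).
$$
Extending by linearity over the character basis of $P(G)$, the computation $f * p = \sum c_\gamma \widehat{f}(\gamma)\gamma$ for $p = \sum c_\gamma \gamma$ yields at once $L(f * p) = f * L(p)$, so $L$ is an $L_1$-morphism on $X^0$.

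Now define $\Phi := B|_{X^0} - L : X^0 \to Y$; the difference lands in $Y = \ker \pi$ since both $B$ and $L$ are sections of $\pi$. Linearity of $L$ gives $\Phi(x+x') - \Phi(x) - \Phi(x') = B(x+x') - B(x) - B(x')$, whose norm is at most $2K(\|x\|+\|x'\|)$, establishing quasilinearity. For the centralizer estimate, $\Phi(f * x) - f * \Phi(x) = B(f * x) - f * B(x)$ because $L$ already commutes with the $L_1$-action; both $B(f * x)$ and $f * B(x)$ are lifts of $f * x$ in $Z$ of norm at most $K\|f\|_1 \|x\|$, so their difference lies in $Y$ with norm at most $2K\|f\|_1 \|x\|$. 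Hence $\Phi$ is an $L_1$-centralizer.

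For the equivalence, the map $u : Y \oplus_\Phi X^0 \to Z$ given by $u(y, x) = \iota(y) + L(x)$ is a linear $L_1$-morphism (the latter because $L$ is). The identity $\iota(y) + L(x) = \iota(y - \Phi(x)) + B(x)$ gives $\|u(y,x)\|_Z \leq \|y - \Phi(x)\| + K\|x\|$, so $u$ is continuous and extends to $\bar u : Z(\Phi) \to Z$. Since $\bar u$ is the identity on $Y$ and induces the identity on $X$ (verified on the dense subspace $Y \oplus_\Phi X^0$, hence everywhere), the five-lemma makes $\bar u$ an isomorphism of extensions. The main obstacle is the construction of $L$: without the idempotent property of the characters in $L_1(G)$, special to compact abelian $G$, it is not at all clear how to produce even a discontinuous $L_1$-morphism section on a dense submodule.
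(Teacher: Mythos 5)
Your proof is correct and follows essentially the same route as the paper's: construct a bounded homogeneous section $B$, build a (discontinuous) linear $L_1$-morphism section $L$ on $P(G)$ by exploiting the convolution-idempotent property $\gamma*\gamma=\gamma$ of characters, take $\Phi = B - L$, and show $u(y,f)=y+L(f)$ realizes the equivalence. The only cosmetic difference is that the paper pins down $z_\gamma := \gamma * B(\gamma)$ explicitly (so no extra choice is needed), whereas you phrase $L(\gamma)=\gamma*z_\gamma$ in terms of an arbitrary lift; both give the same map once one specializes to $z_\gamma = B(\gamma)$.
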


\begin{proof}
The key point is that $P(G)$ is a projective $L_1$-module in the purely algebraic sense. We do the proof for the $L_p$ case, but it is analogous for $C(G)$. Consider an extension of $L_1$-modules
$$
\begin{tikzcd}
0 \arrow[r] & Y \arrow[r,"\jmath"] & Z \arrow[r, "\varpi "] & L_p \arrow[r] & 0
\end{tikzcd}
$$
in which we assume that $Y=\ker\varpi$ and $\jmath$ is the inclusion map. Let $B:L_p\To Z$ be a bounded homogeneous section of $\varpi$, which exists by the open mapping theorem. Given $\gamma\in\Gamma$ we consider it as a polynomial and we set $z_\gamma=\gamma*B(\gamma)$; note that $\gamma*z_\gamma=z_\gamma$ and that  $\varpi(z_\gamma)= \varpi(\gamma*B(\gamma))=\gamma*\varpi(B(\gamma))=\gamma*\gamma=\wh\gamma(\gamma)\gamma=\gamma$. Moreover, the map $L: P(G)\To Z$ sending each polynomial $f=\sum_\gamma c_\gamma\gamma$ to $\sum_\gamma c_\gamma z_\gamma$ is an $L_1$-morphism since for each $a\in L_1$ and every polynomial $f=\sum_\gamma c_\gamma\gamma$ one has
\begin{align*}
L(a*f)&=L\Big( \sum_\gamma \wh a(\gamma) c_\gamma \gamma\Big) = \sum_\gamma \wh a(\gamma) c_\gamma z_\gamma,\\
a*L(f)&=a*\sum_\gamma c_\gamma z_\gamma= \sum_\gamma c_\gamma (a*z_\gamma)
= \sum_\gamma c_\gamma (a*\gamma)z_\gamma=  \sum_\gamma c_\ga \wh a(\gamma) z_\gamma.
\end{align*}
Now, the difference $\Phi=B-L$ takes values in $Y=\ker\varpi$ since $\varpi(B(f)-L(f))=f-f=0$ for every $f\in P(G)$ and $\Phi: L_p^0\To Y$ is an $L_1$-centralizer:
\begin{align*}
&\|\Phi(f+g)-\Phi f-\Phi g\|=\|B(f+g)-B(f)-B(g)\| 
\leq 2\|B\|\big(\|f\|+\|g\|\big);\\
&\|\Phi(a*f)-a*\Phi(f)\|= \|B(a*f)-a*B(f)\|\leq \|B\|\|a*f\|+\|a\|_{L(Z)}\|B\|\|f\|\leq C\|a\|\|f\|.
\end{align*}
To check that the extension generated by $\Phi$ is equivalent to the starting one, define $u: Y\oplus_\Phi L_p^0\To Z$ by $u(y,f)=y+L(f)$. Such $u$ is bounded since writing $L=B-\Phi$ one has
$$
\|u(y,f)\|=\|y+Bf-\Phi f\|\leq  \|y-\Phi f\|+\|B f\|\leq \|B\| \big( \|(y,f)\|_\Phi \big).
$$
Therefore, $u$ is a homomorphism and makes commutative the diagram
$$
\xymatrix{
0\ar[r] & Y \ar@{=}[d] \ar[r]^-\imath & Y\oplus_\Phi L_p^0 \ar[d]^{u}\ar[r]^-\pi & L_p^0  \ar[r]\ar[d]^{\text{inclusion}} & 0\\
0\ar[r] & Y \ar[r]^-\jmath & Z \ar[r]^-\varpi & L_p  \ar[r] & 0
}
$$
Extending $u$ to the completion of $ Y\oplus_\Phi L_p^0$ ends the proof.
\end{proof}

Note that if $\Gamma$ is countable (equivalently, if $G$ is metrizable) the proof is simpler since in this case $L_p$ contains a dense free submodule.
The identification of morphisms from the polynomials to modules on which the Fourier transform makes sense is very easy. They are all ``multipliers'': 

\begin{lem}\label{lem:morphisms}
Let $a: \Gamma\To \mathbb C$ be any function. Then the map $\alpha: P(G)\To M(G)$ defined by $\alpha(f)=\mathscr F^{-1}(a\wh f)$ is a morphism of $L_1(G)$-modules and every morphism arises in this way. In particular, every  morphism $ P(G)\To M(G)$ takes values in $P(G)$. 
\end{lem}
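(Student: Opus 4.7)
The plan is to exploit the fact that characters are convolution idempotents, together with the injectivity of the Fourier transform on $M(G)$. The forward direction is essentially a reformulation: for any $a:\Gamma\to\mathbb C$ and any polynomial $f=\sum_\gamma c_\gamma \gamma$, the ``Fourier support'' $\operatorname{supp}\wh f$ is finite, so $a\wh f$ is also finitely supported and $\alpha(f)=\mathscr F^{-1}(a\wh f)=\sum_\gamma a(\gamma)c_\gamma\gamma$ is a well-defined polynomial. Linearity is obvious; the intertwining with the $L_1$-action follows at once from $\widehat{g*\mu}=\wh g\,\wh\mu$ for $g\in L_1(G)$ and $\mu\in M(G)$, since $\widehat{\alpha(g*f)}=a\,\wh g\wh f=\wh g\,\widehat{\alpha(f)}=\widehat{g*\alpha(f)}$.

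For the converse, let $\alpha:P(G)\to M(G)$ be a morphism of $L_1$-modules. I would first recall the basic orthogonality $\gamma*\chi=\delta_{\gamma\chi}\gamma$ (an immediate consequence of $\int_G\overline\gamma(y)\chi(y)\,dy=\delta_{\gamma\chi}$), so that in particular each character satisfies $\gamma*\gamma=\gamma$ in $L_1(G)$. Applying $\alpha$ and using that it is a morphism,
\[
\alpha(\gamma)=\alpha(\gamma*\gamma)=\gamma*\alpha(\gamma)\qquad(\gamma\in\Gamma).
\]
Taking Fourier transforms and using $\wh\gamma=\mathbf 1_{\{\gamma\}}$, this says that $\widehat{\alpha(\gamma)}$ is supported at the single point $\gamma\in\Gamma$, hence is a scalar multiple of $\mathbf 1_{\{\gamma\}}$. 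By injectivity of the Fourier transform on $M(G)$, there is a unique $a(\gamma)\in\mathbb C$ with $\alpha(\gamma)=a(\gamma)\gamma$, which in particular lies in $P(G)$.

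This defines the required function $a:\Gamma\to\mathbb C$. Extending by linearity, every polynomial $f=\sum_\gamma c_\gamma\gamma$ is sent to $\alpha(f)=\sum_\gamma c_\gamma a(\gamma)\gamma=\mathscr F^{-1}(a\wh f)$, giving both the formula and the assertion $\alpha(P(G))\subset P(G)$. There is no real obstacle: the only slightly delicate point is noticing that one does not need $\alpha$ to be continuous, since the entire argument takes place on finite linear combinations of characters where Fourier manipulations are purely algebraic, and the only genuinely analytic input is the classical uniqueness theorem for the Fourier transform on $M(G)$.
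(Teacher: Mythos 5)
Your proof is correct and follows essentially the same route as the paper's: both use that $\gamma*\gamma=\gamma$ for characters, so the morphism property forces $\alpha(\gamma)=\gamma*\alpha(\gamma)$, and then the Fourier support (or, as the paper puts it, the translation-invariance relation $\gamma*\mu=\mu$) pins $\alpha(\gamma)$ down to a scalar multiple of $\gamma$. Your version just makes explicit the uniqueness theorem for $\mathscr F$ on $M(G)$ which the paper leaves implicit in "from here it follows."
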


\begin{proof}
The first part of the first part is trivial. The converse is easy: if $\alpha: P(G)\To M(G)$ is a morphism of $L_1$-modules and $\mu=\alpha(\gamma)$ then $\gamma*\mu=\mu$, and from here it follows that $\mu = a \gamma dx$ for some $a\in\mathbb C$ depending on $\gamma$.
\end{proof}

\subsection{The Kalton-Peck maps}\label{sec:KPmaps}
We now introduce the quasilinear maps we need to carry out the main construction.
Although our main goal is the construction of centralizers over convolution algebras,
we ask the reader to forget about groups for a moment since they play no role here. So, let $I$ be an ``index'' set (which will be countable in the applications) and consider the spaces $\ell_p(I)$ for $1\leq p\leq\infty$ with their usual norms.

It is clear that each $\ell_p(I)$ is an $\ell_\infty(I)$-module under the pointwise product and that $\ell_p^0(I)$, the subspace of finitely supported elements, is a dense submodule of $\ell_p(I)$ for $p<\infty$.
Please note that the meaning of the superscript $0$ is different in $\ell_p^0(I)$ than in $L_p^0(G)$.

 Given a Lipschitz function $\varphi:\mathbb R\To\mathbb C$ vanishing at zero, the Kalton-Peck map $\Phi:\ell_p^0(I)\To\ell_p(I)$   pointwise defined by
$$
\Phi(f)= f\varphi\left(\log\frac{\|f\|_p}{|f|}\right)
$$ 
is an $\ell_\infty(I)$-centralizer with quasilinear constant $8L_\varphi/e$ and centralizer constant $2L_\varphi/e$, where $L_\varphi$ is the Lipschitz constant of $\varphi$. This appears in \cite{k-p} and \cite{k-comm}; see \cite[Section 3.12]{2c} for a more homogeneous argument. Actually $\Phi$ is {\em strictly symmetric} meaning that it has the following properties:
\begin{itemize}
\item[($\flat$)] $\Phi(f\circ\sigma)= (\Phi f)\circ\sigma$ for every bijection $\sigma:I\To I$.
\item[($\sharp$)] If $u\in\ell_\infty(I)$ is unitary then $\Phi(uf)= u\Phi (f)$.
\end{itemize}
These will play a role later. The fact that $\Phi$ depends not only on $\varphi$ but also on $p$ will not cause any confussion and actually only the case $p=2$ will be used in the main construction.

Now assume that $\sigma: I\To I$ is an injective map, not necessarily surjective. Then $\sigma$ induces a rearrangement operator $R^\sigma$ on every $\ell_p(I)$ sending $e_n$ to $e_{\sigma(n)}$, that is,
$$
R^\sigma(f)(k)=\begin{cases}
f(\sigma^{-1}(k))  & \text{for $k$ in the range of $\sigma$;}\\
0 &\text{otherwise.}
\end{cases}
$$
We close this section with the following remark:

\begin{lem}\label{lem:abc}$\,$
\begin{itemize}
\item[(a)]Every quasilinear map $\Phi:\ell_p^0(I)\To\ell_p(I)$ having property $(\sharp)$ is an $\ell_\infty(I)$-centralizer.

\item[(b)]Every mapping $\Phi:\ell_p^0(I)\To\ell_p(I)$ having property $(\sharp)$ preserves supports in the sense that $\supp(\Phi f)\subset\supp f$ for all $f$.

\item[(c)]
Every mapping $\Phi:\ell_p^0(I)\To\ell_p^0(I)$ having property $(\flat)$ commutes with $R^\sigma$ for any injection $\sigma:I\To I$.
\end{itemize}
\end{lem}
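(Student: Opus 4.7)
The plan is to address each of the three items separately. For (a) and (b), the idea is to exploit that unitaries in $\ell_\infty(I)$ are sufficiently numerous to reconstruct arbitrary multiplications; for (c), the key observation is that the finite-support hypothesis reduces an injection to a bijection once one restricts attention to the relevant support.

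For (a), every $u\in\ell_\infty(I)$ with $\|u\|_\infty\le 1$ admits a pointwise decomposition $u=\tfrac12(u_1+u_2)$ with $|u_j(i)|=1$ for all $i$ and $j=1,2$, simply because each complex number of modulus $\le 1$ is the midpoint of two points on the unit circle. Given $f\in\ell_p^0(I)$, homogeneity of $\Phi$ turns this into $2\Phi(uf)=\Phi(u_1f+u_2f)$, whereas quasilinearity furnishes the estimate $\|\Phi(u_1f+u_2f)-\Phi(u_1f)-\Phi(u_2f)\|\le 2Q\|f\|$. Property $(\sharp)$ then identifies $\Phi(u_jf)=u_j\Phi(f)$, and the whole chain collapses to $\|\Phi(uf)-u\Phi(f)\|\le Q\|f\|$ for $\|u\|_\infty\le 1$; homogeneity in $u$ yields the centralizer estimate with constant $Q$. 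For (b) the argument is even cleaner and does not use quasilinearity: if $i_0\notin\supp f$, the element $u$ that equals $-1$ at $i_0$ and $1$ elsewhere is unitary and satisfies $uf=f$, so $(\sharp)$ forces $u\Phi(f)=\Phi(f)$, and reading the $i_0$-coordinate gives $-\Phi(f)(i_0)=\Phi(f)(i_0)$, hence $\Phi(f)(i_0)=0$.

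For (c), both $f$ and $\Phi(f)$ lie in $\ell_p^0(I)$, so $F=\supp f\cup\supp \Phi(f)$ is finite. If $I$ is finite then the injection $\sigma$ is automatically a bijection and $(\flat)$, rewritten as $\Phi R^\tau=R^\tau\Phi$ for every bijection $\tau$ (via $\tau=\sigma^{-1}$), applies directly. If $I$ is infinite, $|I\setminus F|=|I\setminus\sigma(F)|=|I|$, so the partial bijection $\sigma|_F\colon F\to\sigma(F)$ extends to a bijection $\tau\colon I\to I$. Because $\tau$ and $\sigma$ agree on $F$, the operators $R^\tau$ and $R^\sigma$ coincide on any function supported in $F$; in particular $R^\tau f=R^\sigma f$ and $R^\tau(\Phi f)=R^\sigma(\Phi f)$. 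Property $(\flat)$ then yields $\Phi(R^\sigma f)=\Phi(R^\tau f)=R^\tau(\Phi f)=R^\sigma(\Phi f)$, as required. The main obstacle is the bookkeeping in this last part: one must be careful to include $\supp\Phi(f)$ in $F$ as well as $\supp f$, or the final equality fails, which is precisely why the codomain restriction to $\ell_p^0(I)$ in (c) is essential.
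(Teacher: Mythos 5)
Your proposal is correct and follows essentially the same route as the paper on all three parts. A small point of difference in (a): the paper invokes the fact that elements of the unit ball of $\ell_\infty(I)$ are averages of \emph{four} unitaries, while you observe that in the complex case \emph{two} suffice (each $z$ with $|z|\le1$ is the midpoint of two points on the unit circle), which gives the slightly sharper centralizer constant $Q$. For (b) your coordinatewise argument (flipping one sign at a time) and the paper's global choice of $u$ (equal to $1$ on $\supp f$, $-1$ elsewhere) are the same idea; and for (c) your construction of the bijection $\tau$ extending $\sigma|_F$ with $F=\supp f\cup\supp\Phi f$ matches the paper's choice of $\varsigma$ agreeing with $\sigma^{-1}$ on $F$ — indeed $\varsigma=\tau^{-1}$ — and you are right that including $\supp\Phi f$ in $F$ is the crucial bookkeeping step.
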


\begin{proof} (a) follows from the fact that every function in the unit ball of $\ell_\infty(I)$ 
can be written as the average of four unitaries.

(b) Pick $f$ and let $u$ be the function that assumes the value 1 on $\supp f$ and $-1$ elsewhere. Then $uf=f$ and $u\Phi f=\Phi f$ implies that  
 $\supp(\Phi f)\subset\supp f$.
 
 (c) is obvious if $I$ is finite. Otherwise for each $f$ we can take a bijection $\varsigma$ that agrees with $\sigma^{-1}$ on $\supp f\cup \supp\Phi f$ and thus $R^\sigma(\Phi f)= (\Phi f)\circ\varsigma= \Phi (f\circ\varsigma)= \Phi (R^\sigma f)$.
\end{proof}

\section{Construction of $L_1$-centralizers}\label{sec:main}
Although some of the ideas of this section could be presented in a more general setting the final argument works only in the compact case since we need the containments $L_p(G)\subset L_q(G)$ for $q\leq p$. So, we invariably assume that $G$ is a compact group with dual group $\Gamma$. We emphasize the fact that $\Gamma$ is discrete by writing $\ell_p(\Gamma)$ instead of the customary $L_p(\Gamma)$.
Note that $\mathscr F$ is an isometry of $L_2^0(G)$ onto $\ell_2^0(\Gamma)$.

We will present three different constructions of $L_1$-centralizers, each with its pros and cons. The first two stem from the same idea: one fixes $p$ and $q$, takes $f\in L_p(G)$, does {\em something} with its Fourier coefficients, and moves back to $L_q(G)$.

\subsection{A global construction}\label{sec:global}
Given a quasilinear map $\Omega: \ell_2^0(\Gamma) \To  \ell_2(\Gamma)$  we 
consider the composition
$$
\xymatrixcolsep{3pc}
\xymatrix{\mho: 
 L_2^0(G) \ar[r]^{\mathscr F} & \ell_2^0(\Gamma) \ar[r]^\Omega & \ell_2(\Gamma) \ar[r]^{\mathscr F^{-1}} & L_2(G) 
}
$$
Obviously $\mho$ is quasilinear, with $Q(\mho)= Q(\Omega)$. Also, for  $1\leq q\leq 2\leq p\leq \infty$ we can consider the composition
$$
\xymatrixcolsep{3pc}
\xymatrix{\mho^{pq}: 
L_p^0(G) \ar[r]^-{\text{inclusion}} & L_2^0(G) \ar[r]^{\mathscr F} & \ell_2^0(\Gamma) \ar[r]^\Omega & \ell_2(\Gamma) \ar[r]^{\mathscr F^{-1}} & L_2(G) \ar[r]^{\text{inclusion}} & L_q(G) 
}
$$
where $\mho^{22}=\mho$. It is clear that every $\mho^{pq}$ is quasilinear, with $Q(\mho^{pq})\leq Q(\Omega)$. As pure maps all $\mho^{pq}$ agree. However, they have different properties depending on the norms one considers in the domain and range. 

Let us notice that if $\Omega$ maps finitely supported sequences to finitely supported sequences  then $\mho$ maps polynomials to polynomials. If, besides, $\Omega$ preserves supports (as do all Kalton-Peck maps and can be assumed for all $\ell_\infty(\Gamma)$-centralizers) then, given characters $\gamma_1,\dots,\gamma_k\in \Gamma$, one has $\mho[\gamma_1,\dots,\gamma_k]\subset [\gamma_1,\dots,\gamma_k]$, where the square brackets denote the linear span.

From a linear point of view $\mho$ is ``the same object'' as $\Omega$; however, things are different from the perspective of the algebras $\ell_\infty(\Gamma), L_\infty(G)$ and the convolution algebras $\ell_1(\Gamma), L_1(G)$ as we now see.
Recall that each $\eta\in \Gamma$ ``translates'' functions on $\Gamma$ by the rule $f_\eta(\gamma)=f(\gamma\eta^{-1})$. We say that $\Omega:\ell_2^0(\Gamma)\To \ell_2(\Gamma)$ commutes with translations if $\Omega(f_\eta)=(\Omega f)_\eta$ for every $\eta$ and every $f$.

\begin{lem}\label{lem:*vs.}
With the same notations as before:
\begin{itemize}
\item[(a)] If $\Omega$ is an $\ell_\infty(\Gamma)$-centralizer and $1\leq q\leq 2\leq p\leq \infty$ then $\mho^{pq}$ is an $L_1(G)$-centralizer, with $C_{L_1(G)}[\mho^{pq}]\leq C_{\ell_\infty(\Gamma)}[\Omega]$.
\item[(b)] $\Omega$ commutes with characters if and only if $\mho^{pq}$ commutes with translations.
\item[(c)] $\Omega$ commutes with translations if and only if $\mho^{pq}$ commutes with characters.
\end{itemize}
\end{lem}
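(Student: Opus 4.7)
The plan is to transfer the whole question to the Fourier side, where all three claims become routine. Three identities carry the argument: $\F(a*f)=\wh{a}\wh{f}$ (convolution by $a\in L_1(G)$ corresponds to pointwise multiplication by $\wh{a}\in\ell_\infty(\Gamma)$), $\F(f_y)(\gamma)=\gamma(y)\wh{f}(\gamma)$ (translation by $y\in G$ corresponds to multiplication by the character $y$ of $\Gamma$, using Pontryagin duality $\wh{\Gamma}=G$), and the dual $\F(\gamma f)=(\wh{f})_\gamma$ (multiplication by the character $\gamma\in\Gamma$ on $G$ corresponds to translation by $\gamma$ on $\Gamma$). Since $L_p^0(G)=P(G)$ for every $p\in[1,\infty]$, its $\F$-image is precisely the set $\ell_2^0(\Gamma)$ of finitely supported sequences that forms the common domain of $\Omega$, so the three maps $\mho^{pq}=\F^{-1}\circ\Omega\circ\F$ live on the same underlying set regardless of the function-space norms.

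For (a), I would apply $\F$ to the defect and read off
$$
\F\bigl(\mho^{pq}(a*f)-a*\mho^{pq}(f)\bigr)=\Omega(\wh{a}\wh{f})-\wh{a}\,\Omega(\wh{f}).
$$
The $\ell_\infty(\Gamma)$-centralizer hypothesis bounds the $\ell_2(\Gamma)$-norm of the right-hand side by $C_{\ell_\infty(\Gamma)}[\Omega]\,\|\wh{a}\|_\infty\,\|\wh{f}\|_{\ell_2}$. The rest is bookkeeping: because $G$ is compact (hence Haar-probability) the inclusions $L_p(G)\hookrightarrow L_2(G)$ for $p\geq 2$ and $L_2(G)\hookrightarrow L_q(G)$ for $q\leq 2$ are contractive, and combined with Plancherel $L_2(G)\cong\ell_2(\Gamma)$ and the obvious $\|\wh{a}\|_\infty\leq\|a\|_{L_1}$ one gets
$$
\|\mho^{pq}(a*f)-a*\mho^{pq}(f)\|_{L_q}\leq C_{\ell_\infty(\Gamma)}[\Omega]\,\|a\|_{L_1}\,\|f\|_{L_p}.
$$
This is precisely where the standing assumption $1\leq q\leq 2\leq p\leq\infty$ is used in an essential way.

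Parts (b) and (c) are then purely formal. For (b), the second intertwining identity gives $\F(\mho^{pq}(f_y))=\Omega(y\wh{f})$ and $\F((\mho^{pq}(f))_y)=y\,\Omega(\wh{f})$; as $\wh{f}$ sweeps out $\ell_2^0(\Gamma)$ and $y$ sweeps out $\wh{\Gamma}=G$, equality for all $f,y$ is exactly the statement that $\Omega$ commutes with every character of $\Gamma$. Part (c) is the exact dual obtained from the third identity: $\F(\gamma f)=(\wh{f})_\gamma$ converts multiplication by $\gamma\in\Gamma$ on $G$ into translation by $\gamma$ on $\Gamma$, so $\mho^{pq}$ commutes with characters iff $\Omega$ commutes with translations. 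The only substantive step is the norm bookkeeping in (a); no deeper obstacle appears.
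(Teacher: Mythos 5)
Your proposal is correct and follows essentially the same route as the paper: conjugate the defect by the Fourier transform, invoke Plancherel, use $\|\wh{a}\|_\infty\leq\|a\|_{L_1}$ together with the contractive inclusions $L_p\hookrightarrow L_2\hookrightarrow L_q$ on the compact group, and for (b),(c) read the two intertwining identities as exchanging characters and translations between $G$ and $\Gamma$. The only cosmetic difference is that the paper estimates directly against $\mu\in M(G)$ (proving the slightly stronger fact that $\mho$ is an $M(G)$-centralizer) whereas you restrict to $a\in L_1(G)$.
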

In the following proof we simply write $\mho$ for any of the maps $\mho^{pq}$.
\begin{proof}
a) Actually $\mho$ is even a centralizer over the larger convolution algebra $M(G)$: pick $\mu\in M(G)$ and $f\in L_2^0(G)$. Then
\begin{align*}
\|\mho(\mu*f)-\mu*\mho(f)\|_{L_2(G)}&=
\left\|\F^{-1}\big(\Omega(\wh{\mu\!*\!f})\big)-\mu*\F^{-1}(\Omega\wh{f})\right\|_{L_2(G)}=
\|(\Omega(\wh{\mu}\,\wh f)-\wh \mu\,\Omega(\wh{f})\|_{\ell_2(\Gamma)}\\
&\leq C_{\ell_\infty(\Gamma)}(\Omega)\|\wh\mu\|_{\ell_\infty(\Gamma)}\|\wh f\|_{\ell_2(\Gamma)}\leq C_{\ell_\infty(\Gamma)}(\Omega)\|\mu\|_{M(G)}\|f\|_{L_2(G)}.
\end{align*}

(b) The hypothesis on $\Omega$ means that $\Omega(y c)=y\Omega(c)$ for every $y\in G$ and every finitely supported $c$, where $y:\Gamma\To\mathbb C$ is defined as $y(\gamma)=\gamma(y)$. 
To prove $\implies$
we must check that $\mho(f_y)= (\mho f)_y$ for every $y\in G$ and every polynomial $f$. The point is that $\mathscr F(f_y)=y\,\mathscr F(f)$. Now,
$$
\mho(f_y)=\mathscr F^{-1}\big(\Omega(\wh f_y)\big) = \mathscr F^{-1}\big(\Omega(y\wh f)\big)= 
\mathscr F^{-1}\big(y\Omega(\wh f)\big)= (\Omega f)_y.
$$
To prove the converse note that $\Omega(c)=\mathscr F\mho( \mathscr F^{-1}(c))$ for every finitely supported $c:\Gamma\To\mathbb C$. If $c=\wh f$, we have
$$
\Omega(yc)= \mathscr F\mho( \mathscr F^{-1}(yc))
= \mathscr F\mho( f_y)=\mathscr F\big((\mho f)_y\big)=y \Omega(c).
$$ 
The proof of (c) is formally identical to that of (b).
\end{proof}
Note that the hypotheses on $\Omega$ in (a), (b) and (c) are automatic if $\Omega$ is strictly symmetric since each $y\in G$ is unitary in $\ell_\infty(\Gamma)$.
We now study the triviality of the quasilinear maps $\mho^{pq}$ for $1\leq q\leq 2\leq p \leq \infty$. Since we have the factorization
$$
\xymatrixcolsep{3pc}
\xymatrix{
\mho^{\infty 1}: L_\infty^0(G) \ar[r]^-{\text{inclusion}} & L_p^0(G) \ar[r]^{\mho_{pq}} & L_q(G) \ar[r]^{\text{inclusion}} & L_1(G) 
}
$$
we see that if $\mho^{\infty 1}$ is not trivial then neither is ${\mho^{pq}}$. The following result applies, in particular, when $\Omega:\ell_2^0(\Gamma)\To \ell_2(\Gamma)$ commutes with translations and $\Phi=\mho^{\infty 1}$:

\begin{prop}\label{prop:trivial-bounded} Let $\Phi:  L_\infty^0(G)\To L_1(G)$ be a quasilinear map such that $\Phi(\gamma f)=\gamma\,\Phi(f)$ for every $\gamma\in\Gamma$ and every $f\in  L_\infty^0(G)$. TFAE:
\begin{enumerate}
\item $\Phi: L_\infty^0(G)\To L_1(G)$ is trivial;
\item There is  $\mu\in M(G)$ so that
	$\|\Phi f - f\mu \|_{M(G)} \leq K \|f\|_\infty$ for some $K$ and every $f\in L_\infty^0(G)$.
\item $\Phi$ is bounded from $L_\infty^0(G)$ to $L_1(G)$.
\end{enumerate}
\end{prop}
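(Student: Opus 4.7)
The plan is to prove the cycle of implications through $(3)$ as a hub: $(2)\Leftrightarrow(3)$ and $(1)\Leftrightarrow(3)$. The substantive content lies entirely in $(1)\Rightarrow(3)$; the other three passages are immediate.

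For $(2)\Rightarrow(3)$ one uses the trivial estimate $\|f\mu\|_{M(G)}\leq\|f\|_\infty\|\mu\|_{M(G)}$ and the triangle inequality, keeping in mind that $L_1(G)\hookrightarrow M(G)$ is an isometric embedding and that $\Phi f\in L_1(G)$. The reverse $(3)\Rightarrow(2)$ holds with $\mu=0$. And $(3)\Rightarrow(1)$ is immediate because the zero map is an $L_1$-morphism, so $\|\Phi f-0\|_{L_1}\leq K\|f\|_\infty$ directly witnesses the triviality of the associated extension.

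For the implication $(1)\Rightarrow(3)$, triviality furnishes an $L_1$-morphism $\phi:L_\infty^0\To L_1(G)$, a priori discontinuous, with $\|\Phi f-\phi f\|_{L_1}\leq K\|f\|_\infty$. By Lemma~\ref{lem:morphisms}, $\phi$ is a multiplier, $\phi(f)=\F^{-1}(a\,\wh f)$ for some $a:\Gamma\To\mathbb C$; evaluating on a single character one reads $\phi(\gamma)=a(\gamma)\gamma$, so $\|\phi(\gamma)\|_{L_1}=|a(\gamma)|$. The key step is that the character-commuting hypothesis applied to $f=1$ yields $\Phi(\gamma)=\gamma\,\Phi(1)$, and since $|\gamma|\equiv 1$ on $G$ this produces the uniform bound $\|\Phi(\gamma)\|_{L_1}=\|\Phi(1)\|_{L_1}$ for all $\gamma\in\Gamma$. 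Combined with $\|\Phi(\gamma)-\phi(\gamma)\|_{L_1}\leq K$, this clamps $|a(\gamma)|\leq K+\|\Phi(1)\|_{L_1}$, so $a\in\ell_\infty(\Gamma)$. Now Plancherel gives $\|\phi(f)\|_{L_2}\leq\|a\|_\infty\|f\|_{L_2}$, and because $G$ carries a probability measure the inclusions $L_\infty\hookrightarrow L_2\hookrightarrow L_1$ are contractive; composing, $\phi:L_\infty^0\To L_1(G)$ is bounded, whence so is $\Phi=\phi+(\Phi-\phi)$.

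The main obstacle is precisely this quantitative step that converts an algebraic approximating morphism into a continuous one: triviality alone only promises the existence of some $\phi$, with no a priori control on its multiplier symbol $a$. It is the character-commuting hypothesis that lets us pin down $\|\Phi(\gamma)\|_{L_1}$ independently of $\gamma$ and thereby force $a\in\ell_\infty(\Gamma)$; once one has this, the compactness of $G$ does all the remaining work via the chain $L_\infty\hookrightarrow L_2\hookrightarrow L_1$.
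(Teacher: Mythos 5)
Your cycle structure is sound, and three of the four implications are handled correctly: $(2)\Rightarrow(3)$, $(3)\Rightarrow(2)$ with $\mu=0$, and $(3)\Rightarrow(1)$ via the zero morphism all go through. The genuine gap is in $(1)\Rightarrow(3)$, at the very first line: you write ``triviality furnishes an $L_1$-morphism $\phi:L_\infty^0\To L_1(G)$'', but it does not. Triviality of the \emph{quasilinear map} $\Phi$ furnishes only an (algebraic) \emph{linear} map $L$ with $\|\Phi(f)-L(f)\|\leq K\|f\|_\infty$; there is no reason for $L$ to satisfy $L(\gamma f)=\gamma L(f)$, and without that property Lemma~\ref{lem:morphisms} is simply inapplicable, so $L$ need not be a multiplier $f\mapsto\mathscr F^{-1}(a\widehat f)$. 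Your subsequent estimates on $a(\gamma)$, the Plancherel bound, and the contractive chain $L_\infty\hookrightarrow L_2\hookrightarrow L_1$ are all correct \emph{granted} that $\phi$ is a morphism; the problem is that you have assumed away precisely the hard part.

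The paper's proof fills this gap with an averaging argument. Starting from an arbitrary approximating linear map $L$, it uses the character-commuting hypothesis to observe that \emph{every} translate $f\mapsto\gamma^{-1}L(\gamma f)$ is again an approximating linear map with the same constant $K$, and then averages these translates against an invariant mean on $\Gamma$ (landing in $M(G)$ via the duality with $C(G)$). The resulting $\Lambda$ genuinely commutes with characters, hence by linearity with all polynomials, which forces $\Lambda(f)=f\mu$ for $\mu=\Lambda(1_G)$; this is statement (2), from which (3) follows by your own easy estimate. So the missing idea is this invariant-mean symmetrization: it is what converts an arbitrary linear approximant into an equivariant one, and it is unavoidable if (1) is read, as the paper intends, as triviality in the category of quasilinear maps rather than of $L_1$-centralizers. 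If you strengthened the hypothesis in (1) to ``trivial as an $L_1$-centralizer'' your argument would close, but then the proposition would be strictly weaker than the one stated.
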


\begin{proof}  Regarding (2), if $f\in C(G)$ and $\mu\in M(G)$ then $f\mu$ is the measure defined by $(f\mu)(A)=\int_Afd\mu$. Clearly, $\|f\mu\|_{M(G)}\leq \|f\|_\infty\|\mu\|_{M(G)}$.

(1)$\implies$(2)\quad If $\Phi$ is trivial then there is a linear map $L: L_\infty^0(G) \To L_1(G)$ and a constant $K$ so that $\|\Phi(f)-L(f)\| \leq K\|f\|$ for every trigonometric polynomial $f$. 
Let $m$ be an invariant mean for $\Gamma$ and let us (isometrically) move from $L_1(G)$ to $M(G)$. We kindly ask the reader to have a look at Greenleaf's booklet \cite{green} in case they do not know what an invariant mean is. Let us define a {\em new} linear map $\Lambda:  L_\infty^0(G)\To M(G)$ by the formula
$$
\langle \Lambda(f), h\rangle= m_\gamma \left( \int_G \gamma^{-1} L(\gamma f)h dx \right)\qquad(f\in   L_\infty^0(G), h\in C(G)),
$$ 
where the subscript indicates that $m$ is applied to a function of the variable $\gamma$.
We have 
\begin{align*}
\|\Lambda(f)-\Phi(f)\|_{M(G)} & \leq \sup_{\gamma\in\Gamma}\|\gamma^{-1} L(\gamma f)-\Phi(f)\|_{L_1(G)} = 
\sup_{\gamma\in\Gamma}\|\gamma^{-1} L(\gamma f)- \gamma^{-1} \Phi(\gamma f)\|_{L_1(G)}\\
&= \sup_{\gamma\in\Gamma}\| L(\gamma f)- \Phi(\gamma f)\|_{L_1(G)}\leq K\|f\|_{L_\infty(G)}.
\end{align*}

It is very easy to see that $\Lambda(\gamma f)=\gamma \Lambda(f)$ for every $\gamma\in \Gamma$ and every polynomial $f$. Since every polynomial is a linear combination of (finitely many) characters and $\Lambda$ is linear one also has $\Lambda(g f)=g \Lambda(f)$ for all polynomials $f,g$, and so 
$\Lambda(f)=\Lambda(f1_G)= f\Lambda(1_G)$, which yields (2) just taking $\mu=\Lambda(1_G)$.

(2)$\implies$(3)\quad  $\|\Phi(f)\|_{L_1(G)}\leq \|\Phi(f)-f\mu\|_{M(G)}+\| f\mu\|_{M(G)}\leq K\|f\|_\infty+\|\mu\|_{M(G)}\|f\|_\infty$.
\end{proof}
The following result is a quantitative version of the preceding one. To proceed, given a quasilinear map $\Phi: E\To F$ we define
$
\delta(\Phi)=\inf\{\|\Phi-L\|: L \text{ is a linear map from $E$ to $F$}\}.
$  
 Of course $\delta(\Phi)$ is finite if and only if $\Phi$ is trivial, as a quasilinear map. In general   $\delta(\Phi)$ quantifies the ``degree'' of non-triviality of a trivial map $\Phi$.

\begin{cor}\label{cor:distvsbound}
Let $\Phi:  L_\infty^0(G)\To L_1(G)$ be a quasilinear map commuting with the characters of $G$. Then $2\delta(\Phi)\geq \|\Phi\|-\|\Phi(1_G)\|_1$.
\end{cor}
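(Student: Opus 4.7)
The plan is to extract quantitative bounds from the argument already used in Proposition~\ref{prop:trivial-bounded}, so that every constant appearing there is controlled either by $\delta(\Phi)$ or by $\|\Phi(1_G)\|_1$.

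Fix any linear map $L: L_\infty^0(G)\To L_1(G)$ and set $K=\|\Phi-L\|$; we shall prove the inequality $\|\Phi\|-\|\Phi(1_G)\|_1\leq 2K$, whence the Corollary follows after taking the infimum over $L$. Mimicking the first step in the proof of Proposition~\ref{prop:trivial-bounded}, let $m$ be an invariant mean on $\Gamma$, identify $L_1(G)$ with its image in $M(G)$, and define
\begin{equation*}
\langle \Lambda(f),h\rangle = m_\gamma\!\left( \int_G \gamma^{-1} L(\gamma f) h\, dx \right),\qquad f\in L_\infty^0(G),\ h\in C(G).
\end{equation*}
Exactly as in the proposition, since $\Phi(\gamma f)=\gamma\Phi(f)$ one has
\begin{equation*}
\|\Lambda(f)-\Phi(f)\|_{M(G)} \leq \sup_{\gamma\in\Gamma}\|L(\gamma f)-\Phi(\gamma f)\|_{L_1(G)} \leq K\|f\|_\infty,
\end{equation*}
and the same translation-invariance argument gives $\Lambda(f)=f\mu$ with $\mu:=\Lambda(1_G)$.

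With this in hand, two triangle-inequality estimates finish the job. First, for $f\in L_\infty^0(G)$,
\begin{equation*}
\|\Phi(f)\|_{L_1(G)} \leq \|\Phi(f)-f\mu\|_{M(G)} + \|f\mu\|_{M(G)} \leq K\|f\|_\infty + \|\mu\|_{M(G)}\|f\|_\infty,
\end{equation*}
so $\|\Phi\|\leq K+\|\mu\|_{M(G)}$. Second, evaluating at $f=1_G$ in the estimate $\|\Lambda(f)-\Phi(f)\|_{M(G)}\leq K\|f\|_\infty$ gives
\begin{equation*}
\|\mu\|_{M(G)}=\|\Lambda(1_G)\|_{M(G)} \leq \|\Phi(1_G)\|_{L_1(G)} + K.
\end{equation*}
Adding these, $\|\Phi\|\leq 2K+\|\Phi(1_G)\|_{L_1(G)}$, which is exactly what was wanted.

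There is no real obstacle here; the only thing to notice is that the symmetrization $L\rightsquigarrow \Lambda$ of Proposition~\ref{prop:trivial-bounded} does not increase the sup-distance to $\Phi$, so the numerical constant $K$ propagates intact through the two estimates, yielding the sharp factor $2$ in front of $\delta(\Phi)$.
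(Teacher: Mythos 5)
Your proof is correct and follows essentially the same route as the paper's: both invoke the invariant-mean construction of Proposition~\ref{prop:trivial-bounded} to replace an approximating linear map by $\Lambda(f)=f\mu$ without increasing the distance to $\Phi$, then estimate $\|\mu\|$ by evaluating at $1_G$ and combine. Your version is a touch cleaner — you bound $\|\Phi\|$ directly by the triangle inequality instead of introducing a near-maximizer $f_\varepsilon$ and tracking $\varepsilon$'s — but it is the same argument.
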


\begin{proof}
We may assume $\delta(\Phi)$ and $\|\Phi\|$ finite. 
Fix $\varepsilon>0$  and let $f_\varepsilon$ be normalized in $L_\infty^0(G)$ so that $\|\Phi(f_\varepsilon)\|_1>\|\Phi\|-\varepsilon$. Also, let $L_\varepsilon: L_\infty^0(G)\To L_1(G)$ be a linear map so that $\|\Phi-L_\varepsilon\|<\delta(\Phi)+\varepsilon$. Use the preceding proposition to obtain a measure $\mu_\varepsilon$ so that $\|\Phi(f)-f\mu_\varepsilon\|_{M(G)}\leq \big(\delta(\Phi)+\varepsilon\big)\|f\|_\infty$ for every polynomial $f$. Now, since $\|\Phi(1_G)-\mu_\varepsilon\|_{M(G)}< \delta(\Phi)+\varepsilon$, we have
$$
\delta(\Phi)+\varepsilon\geq \|\Phi f_\varepsilon- f_\varepsilon\mu_\varepsilon\|\geq 
\|\Phi f_\varepsilon\|_1- \|f_\varepsilon\mu_\varepsilon\|_{M(G)}\geq \|\Phi\|-\varepsilon-\|\mu_\e\|\geq  \|\Phi\|-\varepsilon-\delta(\Phi)-\|\Phi(1_G)\|,
$$
which is enough.
\end{proof}

\subsection*{Nontriviality}\label{sec:circle}
Our next aim is, of course, showing
that this mechanism is able to produce nontrivial quasilinear maps.
Although we suspect that $\mho^{\infty 1}$ (and so $\mho^{pq}$ for $1\le q\le 2\le p\le\infty$) is nontrivial for every nontrivial $\Omega$ we have a proof only for some well-behaved centralizers of Kalton-Peck type. 
Let us say that a Lipschitz function $\varphi: [0,\infty)\To\mathbb R$, with $\varphi(0)=0$, is \emph{essentially concave} if it is non-negative and concave (hence non-decreasing) on some interval $[t,\infty)$. Typical examples are $t\longmapsto t^\alpha$ for $0<\alpha\leq 1$ or $t\longmapsto \log(1+t)$ and also $t\longmapsto at^\alpha-bt^\beta$ for $0<\beta<\alpha\leq 1$ and $a>0$.


\begin{thm}\label{th:main}
Let $G$ be an infinite compact abelian group with dual $\Gamma$.
If $\Omega:\ell_2^0(\Gamma)\To \ell_2^0(\Gamma)$ is the Kalton-Peck map associated to an essentially concave, unbounded function then $\mho^{\infty 1}: C^0(G)\To L_1(G)$ is not trivial and neither is $\mho^{pq}: L_p^0(G)\To L_q(G)$ for  $1\leq q\leq 2\leq p\leq  \infty$.
\end{thm}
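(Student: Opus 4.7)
The plan is to apply Corollary~\ref{cor:distvsbound}: to show $\mho^{\infty 1}:C^0(G)\to L_1(G)$ is non-trivial, it is enough to show it is unbounded, once one has verified that it commutes with characters of $G$ and that $\mho^{\infty 1}(1_G)=0$. Commutation is a direct consequence of the strict symmetry of the Kalton--Peck map: property $(\flat)$ in Section~\ref{sec:KPmaps} applied to the bijection $\gamma\mapsto\gamma\eta^{-1}$ gives $\Omega(f_\eta)=(\Omega f)_\eta$ for every $\eta\in\Gamma$, and this is transferred to commutation with characters of $G$ on the function side by Lemma~\ref{lem:*vs.}(c). That $\mho^{\infty 1}(1_G)=0$ is immediate, since $\wh{1_G}$ is a standard basis vector $e_{\gamma_0}$ and $\Omega(e_\gamma)=e_\gamma\,\varphi(\log 1)=0$.

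For unboundedness I would feed $\mho^{\infty 1}$ Rudin--Shapiro-type polynomials: for each sufficiently large $N$, pick $N$ distinct characters $\gamma_1,\ldots,\gamma_N\in\Gamma$ and signs $\epsilon_i\in\{\pm 1\}$ such that
\[
P_N:=\sum_{i=1}^N \epsilon_i\,\gamma_i,\qquad \|P_N\|_{L_\infty(G)}\le C\sqrt{N},
\]
for some absolute constant $C$. Such polynomials can be produced on every infinite compact abelian group by a case split on the structure of $\Gamma$: either $\Gamma$ contains a copy of $\mathbb Z$, giving $\mathbb T$ as a quotient of $G$ and letting the classical Rudin--Shapiro polynomials pull back, or $\Gamma$ contains finite subgroups of unbounded order, in which case $G$ has finite abelian quotients of arbitrary size onto which tensor products of cyclic Rudin--Shapiro polynomials transport back. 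The $L_\infty$-norm is preserved under pullback along any surjective continuous homomorphism.

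With $P_N$ at hand, the Kalton--Peck formula evaluates cleanly: $\wh{P_N}$ is supported on $\{\gamma_1,\dots,\gamma_N\}$ with constant modulus~$1$ and $\|\wh{P_N}\|_{\ell_2}=\sqrt{N}$, so $\Omega(\wh{P_N})=\varphi(\tfrac12\log N)\,\wh{P_N}$, whence $\mho^{\infty 1}(P_N)=\varphi(\tfrac12\log N)\,P_N$. Combining Plancherel ($\|P_N\|_2^2=N$) with the elementary estimate $\|P_N\|_1\ge\|P_N\|_2^2/\|P_N\|_\infty\ge\sqrt{N}/C$ one arrives at
\[
\frac{\|\mho^{\infty 1}(P_N)\|_1}{\|P_N\|_\infty}\ge\frac{\varphi(\tfrac12\log N)}{C^{2}},
\]
and the essential concavity and unboundedness of $\varphi$ make the right-hand side blow up with $N$. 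This gives $\|\mho^{\infty 1}\|=\infty$ and, via Corollary~\ref{cor:distvsbound}, non-triviality of $\mho^{\infty 1}$. The corresponding statement for $\mho^{pq}$ with $1\le q\le 2\le p\le\infty$ is then automatic from the factorization of $\mho^{\infty 1}$ through $\mho^{pq}$ recorded immediately before the theorem. The one delicate point in this outline is step two, the existence of Rudin--Shapiro polynomials in the required generality; for the concrete circle and Cantor cases treated later in the paper this is classical and the rest of the calculation carries over verbatim.
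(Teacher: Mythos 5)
Your route is genuinely different from the paper's, and in one respect cleaner: you feed the map a \emph{flat} polynomial $P_N=\sum_{i\le N}\epsilon_i\gamma_i$ with $|\wh{P_N}|\equiv 1$ on its support, so the Kalton--Peck formula collapses to $\mho^{\infty 1}(P_N)=\varphi(\tfrac12\log N)\,P_N$ and the lower bound $\|P_N\|_1\ge \|P_N\|_2^2/\|P_N\|_\infty$ finishes the unboundedness in two lines. The paper instead uses Riesz products built from a dissociate set $\Sigma$; those have $\|f\|_\infty=O(1)$ but $\wh f$ is \emph{not} of constant modulus, so $\Omega(\wh f)$ is a genuine sum $\sum_k\varphi(b+ka)f_k$, and one has to isolate the $k=1$ term via Pisier/Khintchine and control the remainder by the fast decay of $\|f_k\|_2$. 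Your approach trades that estimate for the existence of flat polynomials. The reduction to unboundedness via Corollary~\ref{cor:distvsbound} and the factorization through $\mho^{pq}$ are exactly as in the paper, so those parts are fine.

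The gap is precisely where you flagged it, and it is not cosmetic. \textbf{First}, the dichotomy is subtler than ``$\Gamma\supset\mathbb Z$ or finite subgroups of unbounded order.'' The second alternative is true, but the finite subgroups need not be cyclic or products of cyclic groups of bounded rank: for $\Gamma=(\mathbb Z/p\mathbb Z)^{(\infty)}$ every finite subgroup of order $\to\infty$ has rank $\to\infty$, and for $\Gamma=\mathbb Z(p^\infty)$ (Prüfer) the finite subgroups are the cyclic $\mathbb Z/p^n\mathbb Z$ only. \textbf{Second, and more seriously}, the ``tensor product of cyclic Rudin--Shapiro polynomials'' claim is wrong when the rank is unbounded: if $\|P_j\|_\infty\le C_j\sqrt{|H_j|}$ then $\|P_1\otimes\cdots\otimes P_r\|_\infty\le \big(\prod_j C_j\big)\sqrt{|H_1\times\cdots\times H_r|}$, and the constant $\prod_j C_j$ is \emph{not} uniform in $r$. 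So on $(\mathbb Z/2\mathbb Z)^r$ naive tensoring of the $\mathbb Z/2\mathbb Z$ solution gives $2^{r/2}\sqrt{|H|}$, which is useless; one needs the genuine Rudin--Shapiro \emph{recursion} on Cantor's group, which gives the uniform $\sqrt{2}$, or (for odd $p$) a Gauss-sum construction with unimodular rather than $\pm1$ coefficients, which gives $|P|\equiv\sqrt{|H|}$ exactly. Your Kalton--Peck computation only uses $|\wh{P_N}|\equiv 1$, so the switch to unimodular coefficients is painless, but as written the existence of the required $P_N$ in full generality is not established, and the specific mechanism you propose to obtain them fails. The paper sidesteps all of this: the only input it needs is that every infinite $\Gamma$ contains an infinite dissociate set, which is truly elementary, at the cost of the more involved ``leading term beats the tail'' estimate.
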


In the remainder of the section we short $L_p(G)$ to plain $L_p$. It is clear from Lemma~\ref{lem:*vs.} that $\mho$ is actually an $L_1$-centralizer. Moreover, the observation before Proposition~\ref{prop:trivial-bounded} guarantees it is enough to prove that $\mho^{\infty 1}$ is unbounded. This amounts to finding functions $f$ with controlled $L_\infty$-norm 
such that $\|\mho f\|_1$ is arbitrarily large.

The main feature here is the use of dissociate Sidon sets which include lacunary sets of characters on $\mathbb T$ and the Rademacher functions on Cantor's group $\Delta$.
To be true we came to Sidon sets only after realizing that they provided a common framework for our earlier proofs for some particular cases of the theorem.
A set $\Sigma\subset  \Gamma$ is called {\em dissociate} if $1\notin\Sigma$ and for finitely many $\gamma_1,\dots,\gamma_k$ different elements of $\Sigma$ and integers $n(j)\in\{0,\pm 1,\pm 2\}$ the implication
$$
\prod_{1\leq j\leq k}\gamma_j^{n(j)}=1\quad\implies  \quad \gamma_j^{n(j)}=1\quad\forall j
$$
holds; see \cite[Definition 2.5]{lr} or \cite[37.12 Definition]{hr}. This notion makes sense in any group and actually 
 infinite dissociate sets of exist in every infinite group: \cite[37.26 Remark (b)]{hr} or \cite[Theorem 2.8]{lr}.
Also, $\Sigma\subset \Gamma$ is said to be a \emph{Sidon} set if every $c\in c_0(\Sigma)$ has the form $\wh f|_\Sigma$ for some $f\in L_1$, in which case there exists $S$ (the Sidon constant of $\Sigma$) such that one can choose $f$ so that $\|f\|_1\leq S\|c\|_\infty$.
Every dissociate set is a Sidon set; actually if $\Sigma$ is dissociate, then $\Sigma\cup \Sigma^{-1}$ is a Sidon set (\cite[37.15 Corollary and 37.27]{hr} or \cite[Corollary 2.9]{lr}).
Sidon sets behave in many respects as the Rademacher system $(r_n)_{n\geq 1}$ as witnessed by the following particular case of a result of Pisier \cite[Th\'eor\`eme 2.1]{pi-sidon}: if $\Sigma$ is a Sidon set, then there is a constant $c$ depending only on $S(\Sigma)$ such that 
\begin{equation}\label{eq:pisier}
c^{-1} \left\|\sum\nolimits_{i\leq k}a_i r_i\right\|_{L_p(\Delta)}\leq 
 \left\|\sum\nolimits_{i\leq k}a_i \gamma_i\right\|_{L_p(G)} \leq 
c \left\|\sum\nolimits_{i\leq k}a_i r_i\right\|_{L_p(\Delta)}
\end{equation}
for all $k$, all scalars $a_i$ and all $\gamma_i\in \Sigma$.
In its turn Khintchine's inequalities state that for every finite $p$ there exists constants $A_p, B_p$ so that
\begin{equation}\label{eq:khintchine}
A_p\left(\sum\nolimits_{i\leq k}|a_i|^2\right)^{1/2} \leq 
\left\|\sum\nolimits_{i\leq k}a_i r_i\right\|_{L_p(\Delta)}\leq 
B_p\left(\sum\nolimits_{i\leq k}|a_i|^2\right)^{1/2}.
\end{equation}
The exact values of these constants  were computed by Haagerup in \cite{haagerup}. We shall use later that $A_1=2^{-1/2}$.


We now fix an infinite dissociate set of characters $\Sigma$. Note that, for any two finite subsets $E,F\subset \Sigma$ and functions $\epsilon:E\To\{\pm1\}, \delta:F\To\{\pm1\}$ so that
$$
\prod_{\gamma\in E}\gamma^{\epsilon(\gamma)}= \prod_{\eta\in F}\eta^{\delta(\eta)}.
$$
we have $E=F$ and $\gamma^{\epsilon(\gamma)}= \gamma^{\delta(\gamma)}$ for all $\gamma\in E$. In particular $\epsilon(\gamma)= \delta(\gamma)$ if $\gamma$ does not have order 2 --and the sign of $\epsilon(\gamma)$ is irrelevant if $\gamma$ has order 2. Let us now consider a sequence $(\gamma_n)_{n\geq 1}$ of different elements of $\Sigma$. Given $\alpha>1$, which will be fixed later, and $N\in \mathbb N$ consider the following Riesz product, depending on $N$ and $\alpha$:
\begin{equation}\label{eq:riesz}
f = \prod_{1\leq j \leq N} \left[1+\frac{i}{\alpha\sqrt{N}}\left(\frac{\gamma_j + \gamma_j^{-1}}{2}\right)\right]
\end{equation}
(see \cite[Section~2.4]{lr} for a discussion on Riesz products).
Let $\Gamma_N$ be the set of characters that can be written as
$\gamma=\prod_{j\leq N} \gamma_j^{\e_j}$ with each $\e_j\in\{-1,0,1\}$. For every  $\gamma\in \Gamma_N$ we define the \emph{length} of $\gamma$ (relative to $\Sigma$) as the number of nonzero exponents in that expression, which is unique up to the signs of the exponents of the elements of order 2 by virtue of dissociateness.

To compute the Fourier coefficients of $f$ one should keep track of the characters of order 2.
Passing to a subset if necessary we only need to distinguish two cases, namely:
\begin{itemize}
	\item[$(\dagger)$] $\Sigma$ has no elements of order $2$. 
	\item[$(\ddagger)$] Every element in $\Sigma$ has order $2$.
\end{itemize}
In the first case, the Fourier coefficients of $f$ are, for $\gamma\in \Gamma_N$,
$$ \wh f(\gamma) = \left(\frac{i}{2\alpha\sqrt{N}}\right)^{\ell(\gamma)}$$
and zero otherwise.
Since $\wh f(\gamma)$ only depends on the length of $\gamma$, we have
$$f = f_0 + f_1 + \cdots + f_N, \quad\text{where}\quad f_k = \sum_{\ell(\gamma)=k} \wh f_k(\gamma) \, \gamma = 
\left(\frac{i}{2\alpha\sqrt{N}}\right)^{k} \sum_{\ell(\gamma)=k} \gamma\quad\text{for}\quad0\leq k\leq N.$$
Note that $f_0=1$, that the sum is taken over $\Gamma_N$ and that there are exactly $\binom{N}{k}2^k$ characters of length $k$ in $\Gamma_N$.
Assuming $(\ddagger)$ instead one has
 \begin{equation} \label{eq:rieszII} f= \prod_{1\leq j\leq N} \left(1+\frac{i\, \gamma_j}{\alpha\sqrt{N}}\right)
 \end{equation}
and the Fourier coefficients of $f$ are different: for $\gamma\in \Gamma_N$, we have
$ \wh f(\gamma) = \left(\frac{i}{\alpha\sqrt{N}}\right)^{\ell(\gamma)}
$ 
and zero otherwise. Besides, now there are $\binom{N}{k}$ characters of length $k$ in $\Gamma_N$ and the decomposition we get is slightly different:
$$f = f_0 + f_1 + \cdots + f_N, \quad\text{where}\quad f_k = \sum_{\ell(\gamma)=k} \wh f_k(\gamma) \, \gamma = 
\left(\frac{i}{\alpha\sqrt{N}}\right)^{k} \sum_{\ell(\gamma)=k} \gamma\quad\text{for}\quad 0\leq k\leq N.$$

 The following result collects some useful properties of these functions: 

\begin{lem} \label{lem:riesz}
 For every $N$ and every $0\leq k\leq N$ the following hold:
\begin{enumerate}
	\item[(a$\dagger$)]  If  $\Sigma$ does not contain elements of order $2$, then  $\|f_k\|_{L_2} \leq \frac{1}{\alpha^k} \left(\frac{1}{2^kk!}\right)^{1/2}$, and therefore, $1\leq \|f\|_\infty \leq \|f\|_{L_2} \leq \left(1+\frac{1}{2 \alpha^2 N}\right)^{N/2} \leq e^{1/(4\alpha^2)}$.
	\item[(a$\ddagger$)] If every element of  $\Sigma$  has order $2$, then  $\|f_k\|_{L_2} \leq \frac{1}{\alpha^k} \left(\frac{1}{k!}\right)^{1/2}$, and therefore, $1\leq \|f\|_\infty \leq \|f\|_{L_2} \leq \left(1+\frac{1}{\alpha^2 N}\right)^{N/2} \leq e^{1/(2\alpha^2)}$.
	\item[(b)] In any case, if $k$ is even then $f_{k}$ is real; if $k$ is odd, $f_{k}$ is purely imaginary.
\end{enumerate}
\end{lem}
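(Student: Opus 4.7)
The plan is to expand each Riesz product into its Fourier series, use the dissociateness of $\Sigma$ to guarantee that all characters produced are pairwise distinct (so that Parseval applies cleanly on the disjoint pieces), and then carry out routine counting and binomial-theorem calculations.

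For (a$\dagger$) and (a$\ddagger$) I start from the decomposition $f=f_0+\cdots+f_N$ already recorded in the preamble of the lemma. Expanding the product, each factor contributes at most one character from $\{1,\gamma_j^{\pm 1}\}$ in case $(\dagger)$ or from $\{1,\gamma_j\}$ in case $(\ddagger)$; by dissociateness, no two distinct multiindex choices produce the same character, so the characters that appear in $f$ are pairwise orthogonal in $L_2(G)$ and Parseval gives
\[
\|f_k\|_{L_2}^2 \;=\; \#\{\gamma\in\Gamma_N:\ell(\gamma)=k\}\cdot |\wh f(\gamma)|^2,
\]
which evaluates to $\binom{N}{k}(2\alpha^2 N)^{-k}$ in case $(\dagger)$ and to $\binom{N}{k}(\alpha^2 N)^{-k}$ in case $(\ddagger)$. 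The first displayed bound in each case then follows from the crude estimate $\binom{N}{k}\leq N^k/k!$. Summing on $k$ and invoking the binomial theorem yields the \emph{exact} identities
\[
\|f\|_{L_2}^2 \;=\; \Bigl(1+\tfrac{1}{2\alpha^2 N}\Bigr)^{N}\quad\text{and}\quad \|f\|_{L_2}^2 \;=\; \Bigl(1+\tfrac{1}{\alpha^2 N}\Bigr)^{N},
\]
to which $\log(1+x)\leq x$ delivers the exponential majorant $e^{1/(4\alpha^2)}$ or $e^{1/(2\alpha^2)}$. The lower bound $1\leq\|f\|_\infty$ comes from $\int_G f\,dx=\wh f(1)=1$ together with $\|f\|_\infty\geq\|f\|_1\geq |\int_G f\,dx|$.

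For (b), every Fourier coefficient of $f_k$ is $i^k$ times a positive real, so it suffices to verify that the ``unsigned'' part $\sum_{\ell(\gamma)=k}\gamma$ is real-valued as a function on $G$. In case $(\ddagger)$ each $\gamma_j$ has order $2$, hence is real-valued, and so is every product of $\gamma_j$'s. In case $(\dagger)$ no character of length $k$ has order $2$ (by dissociateness), and the involution $\gamma\mapsto\gamma^{-1}=\bar\gamma$ has no fixed points on $\{\gamma:\ell(\gamma)=k\}$; pairing each $\gamma$ with its conjugate shows that the sum is fixed by complex conjugation. Multiplying by $i^k$ then yields the parity claim.

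I expect no serious obstacle: the work is essentially bookkeeping. The real content is the use of dissociateness to certify that the characters produced by distinct choices in the Riesz product are genuinely distinct (so Parseval splits the sum with no cross terms) and to pin down the exact counts $\binom{N}{k}2^k$ and $\binom{N}{k}$ of characters of length $k$ in the two cases.
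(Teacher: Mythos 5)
Your argument is correct and follows essentially the same route as the paper: identify the Fourier coefficients of each $f_k$, count characters of length $k$ in $\Gamma_N$ using dissociateness (namely $\binom{N}{k}2^k$ in case $(\dagger)$ and $\binom{N}{k}$ in case $(\ddagger)$), apply Parseval to get the exact $\|f_k\|_2$, bound $\binom{N}{k}$ by $N^k/k!$, and close via the binomial theorem for $\|f\|_2$. Your argument for (b) is a bit more elaborate than necessary: you needn't separate cases or worry about fixed points of $\gamma\mapsto\gamma^{-1}$. Since $\wh f_k(\gamma)=\bigl(\tfrac{i}{2\alpha\sqrt N}\bigr)^k$ (resp. $\bigl(\tfrac{i}{\alpha\sqrt N}\bigr)^k$) is constant on $\{\ell(\gamma)=k\}$ and inversion is a bijection of that set, one gets directly $\overline{f_k}=(-1)^k f_k$, even when some character equals its own inverse; this is the one-line argument the paper uses.
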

\begin{proof} (a$\dagger$) In this case the number of characters of length $k$ in $\Gamma_N$ is $\binom{N}{k}2^k$. Therefore:
\[ \|f_k\|_{L_2} = \frac{1}{\alpha^k} \left[\binom{N}{k}(2N)^{-k}\right]^\frac12 = \frac{1}{\alpha^k} \left[\frac{1}{2^k k!} \left(1-\frac{1}{N}\right) \cdots \left(1-\frac{k+1}{N}\right) \right]^\frac12\]
which is, for a fixed $k$, an increasing sequence converging to $\frac{1}{\alpha^k}\left(\frac{1}{2^kk!}\right)^{1/2}$. Also,
\[ \|f\|_{L_2}^2 = \sum_{k=0}^N \binom{N}{k} (2N\alpha^2)^{-k} = \left(1+\frac{1}{2N\alpha^2}\right)^N\leq e^{1/(2a^2)}.\]

(a$\ddagger$) In this case we have
$$
\|f_k\|_2=(\alpha\sqrt{N})^{-k} \binom{N}{k}^{1\over 2}\leq \frac{1}{\alpha^k}\left(\frac{1}{k!}\right)^{1\over 2}\qquad\implies\qquad \|f\|_\infty \leq \|f\|_{L_2} = \left(1+\frac{1}{\alpha^2 N}\right)^{N\over 2} \leq e^{\frac{1}{2\alpha^2}}
$$ 
Part (b) is obvious since $\gamma$ and $\gamma^{-1}$ have the same length.
\end{proof}

\begin{proof}[End of the proof of Theorem~\ref{th:main}]
When $f$ is a Riesz product of the form (\ref{eq:riesz}), its $L_\infty$-norm is bounded by $e^{1/(2\alpha^2)}$, according to the previous lemma. We now show that the $L_1$-norm of $\mho(f)$ goes to infinity with $N$.  Since
	$$\wh f = \wh 1 + \wh{f_1}+\cdots+\wh{f_N}$$
and the $\wh f_k$'s have disjoint support in $\Gamma$ one obtains
\begin{align*} \Omega \wh f &= 
		\wh f \cdot\varphi\left(\log \frac{\|f\|_2}{|\wh f|}\right) =
		\big(\wh 1 + \wh{f_1}+\cdots+\wh{f_N}\big) \cdot\varphi\left(\log \frac{\|f\|_{L_2}}{|\wh 1 + \wh{f_1}+\cdots+\wh{f_N}|}\right) = \\[2mm]
		&= 
		\wh 1 \,\varphi\left(\log \frac{\|f\|_{L_2}}{\wh 1}\right) + \wh{f_1}\,\varphi\left(\log\frac{\|f\|_{L_2}}{|\wh f_1|}\right) + \cdots + \wh{f_N}\,\varphi\left(\log\frac{\|f\|_{L_2}}{|\wh f_N|}\right).
	\end{align*}
	Assuming ($\dagger$) and taking the inverse Fourier transform we see that the imaginary part of $\mho f$ is given by 
	$$
	f_1  \varphi\left(\log\big(\|f\|_{L_2} 2\alpha\sqrt{N}\big)\right)+ f_3  \varphi\left(\log \big({\|f\|_{L_2}}\big(2\alpha\sqrt{N}\big)^3\big)\right)+ \dots = \sum_{ k\leq N  \text{ odd}}  \! f_k  \varphi\left(\log\big(\|f\|_{L_2} (2\alpha\sqrt{N})^k\big)\right)
	$$
The $L_1$-norm of this sum is large because the $L_1$-norm of the first summand is large and the others have small $L_2$-norms. The details are as follows. 
Letting $b=\log\|f\|_2$ and $a=\log(2\alpha\sqrt{N})$, we have  	
\begin{align*}
\|\mho&(f)\|_{L_1}\geq 
 \left\|\sum_{ k\leq N  \text{ odd}} \varphi(b+ka)\, f_k \right\|_{L_1}
 \geq \varphi(b+a)\|f_1\|_{L_1}- \left\|\sum_{ k\geq 3 \text{ odd}} \varphi(b+ka)\, f_k \right\|_{L_1}\\
& \geq   \varphi(b+a)\|f_1\|_{L_1}-\sum_{ k\geq 3 \text{ odd}}  \varphi(b+ka) \|f_k\|_{2}
 \geq   \varphi(b+a)\|f_1\|_{L_1}-\sum_{ k\geq 3 \text{ odd}}  \big(\varphi(b)+k\varphi(a)\big) \|f_k\|_{2}.
\end{align*}
for each given $\alpha$ provided $N$ is large enough so as to $\varphi$ is concave on $[b+a,\infty)$.
Now, as $\Sigma\cup \Sigma^{-1}$ is a Sidon set, applying first the Pisier inequality and then Khintchine inequality we get a constant $C$, depending only on $S(\Sigma\cup \Sigma^{-1})$ so that, for all $\alpha$ and all $N$,
$$
\|f_1\|_{L_1} \geq \frac{1}{\sqrt{2}C\alpha} \quad\text{since}\quad f_1=\frac{i}{2\alpha\sqrt{N}}\sum_{1\leq j\leq N}(\gamma_j+\gamma_j^{-1}).
$$
%
%
To get an upper bound of
the ``remainder'', note that $b\leq L(\varphi)/(4\alpha^{2})$, so
\begin{align*}
\sum_{ k\geq 3 \text{ odd}}  \big(\varphi(b)+k\varphi(a)\big) \|f_k\|_{2}
\leq \left(\frac{L_\varphi}{4\alpha^2}+\varphi(a)\right)\sum_{k\geq3\text{ odd}} k \|f_k\|_{L_2}
\end{align*}
It is therefore enough to see that the sum is of the order $\alpha^{-3}$. Using that $k! > 2^k $ for $k>3$, we obtain
$$ \sum_{ k\geq 3  \text{ odd}}  k \|f_k\|_{L_2} 
\leq 
\sum_{ k\geq 3  \text{ odd}} \frac{k}{\alpha^k}\sqrt{\frac{1}{2^kk!}} 
\leq 
\frac{\sqrt{3}}{4\alpha^3} + \sum_{ k\geq 5  \text{ odd}} \frac{k}{(2\alpha)^{k}} 
\leq
\frac{\sqrt{3}}{4\alpha^3} + \frac{17}{216\alpha^3}\leq \frac{1}{\alpha^3}.
$$
Combining,
$$
\|\mho(f)\|_{L_1}\geq \frac{\varphi(a)}{\sqrt{2}C\alpha} -\frac{ \varphi(a)}{\alpha^3}-\frac{L_\varphi}{4\alpha^5}
=
\frac{\varphi(a)}{\sqrt{2} \alpha}\overbrace{\left[ \frac{1}{C} -\frac{\sqrt{2}}{\alpha^2}\right]}^{(\star)}-\frac{L_\varphi}{4\alpha^5}.
$$
Thus, we only need to take $\alpha$ so that $(\star)>0$ and then let $N\To\infty$ (hence $a, \varphi(a)\To\infty$). This ends the proof under the assumption ($\dagger$).

If, instead, one assumes ($\ddagger$) the imaginary part of $\mho f$ is
$$ \sum_{ k\leq N  \text{ odd}}  \! f_k  \varphi\left(\log\big(\|f\|_{L_2} (\alpha\sqrt{N})^k\big)\right).
$$
One then has
\begin{align*}
\|\mho&(f)\|_{L_1}
 \geq   \varphi(c+d)\|f_1\|_{L_1}-\sum_{ k\geq 3 \text{ odd}}  \big(\varphi(c)+k\varphi(d)\big) \|f_k\|_{2}.
\end{align*}
where $c=\log\|f\|_2\leq L_\varphi/(2\alpha^2)$ and $d=\log\big(\alpha\sqrt{N}\big)$ for each $\alpha\geq 1$ if $N$ is large enough to guarantee  concavity of $\varphi$ on $[c+d,\infty)$. Using the estimates in (a$\ddagger$) an entirely analogous reasoning yields unboundedness of $\mho$:
first, we have 
$$
f_1=\frac{1}{\alpha\sqrt{N}}\sum_{j\leq N}\gamma_j\qquad\implies \qquad\|f_1\|\geq \frac{1}{\alpha C},
$$
where $C$ depends only on the Sidon constant of $\Sigma=\Sigma^{-1}$. The other inequality we need is
$$
\sum_{k\geq 3 \text{ odd}}k\|f_k\|_2= \frac{3}{\sqrt{6}\alpha^3}+ 
\sum_{k\geq 5 \text{ odd}} \frac{k}{\alpha^k \sqrt{k!}}\leq 
\frac{3}{\sqrt{6}\alpha^3}+ 
\sum_{k\geq 5 \text{ odd}} {\alpha^{-k}}= 
\frac{3}{\sqrt{6}\alpha^3}+ \frac{1}{\alpha^3(\alpha^2-1)}.
$$
Combining,
\begin{equation} \label{eq:esti}
	\|\mho(f)\|_{L_1}\geq \frac{\varphi(d)}{\alpha}\bigg[\overbrace{\frac{1}{C}- \frac{1}{\alpha^2}\left(\frac{3}{\sqrt{6}}+ \frac{1}{\alpha^2-1}\right)}^{(\star)}\bigg]-
	\frac{L_\varphi}{2\alpha^5} \left(\frac{3}{\sqrt{6}}+ \frac{1}{\alpha^2-1}\right)
\end{equation}
and taking $\alpha$ so that $(\star)>0$ ends the proof.
\end{proof}

\subsection{Other ``quasimultipliers''} 
We now consider the existence of extensions of $L_1$-modules $0\To L_q\To Z\To L_p\To 0$ for arbitrary $1\le p,q\le\infty$. Our conclusion will be that they exist as long as the general theory allows it.

Let $\Sigma\subset\Gamma$ be a Sidon set. Combining (\ref{eq:pisier}) with Khintchine inequality
it is clear that the ``summation'' operator $S: \ell_2(\Sigma)\To L_p(G)$ given by $S(c)=\sum_\gamma c(\gamma)\gamma$ is an isomorphic embedding for finite $p$. An obvious duality argument then shows that the operator  $M:L_q(G)\To \ell_2(\Sigma)$ given by $M(f)=\wh f|_\Sigma$ is bounded and surjective for $q>1$ and also that $MS={\bf I}_{\ell_2(\Sigma)}$ so that the range of $S$ is a complemented subspace of $L_p(G)$ for $1<p<\infty$.

\begin{prop}\label{prop:LpLq} Let $\Sigma\subset \Gamma$ be a Sidon set and let $\Omega:\ell_2^0(\Sigma)\To \ell_2(\Sigma)$ be a nontrivial $\ell_\infty(\Sigma)$-centralizer. Given $p\in(1,\infty], q\in[1,\infty)$ the map
$\mho_\Sigma^{pq}: L_p^0(G)\To L_q(G)$ defined by  $\mho_\Sigma^{pq}(f)=S\Omega(\wh f|_\Sigma)$ is a nontrivial $L_1(G)$-centralizer.
\end{prop}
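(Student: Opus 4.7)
My plan is to verify that $\mho_\Sigma^{pq}$ is an $L_1(G)$-centralizer and then use the identity $MS=I_{\ell_2(\Sigma)}$ to pull any hypothetical trivialization of $\mho_\Sigma^{pq}$ back to a trivialization of $\Omega$, contradicting the hypothesis.

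For the centralizer property I will first note that both $M:L_p(G)\to\ell_2(\Sigma)$, $M(f)=\wh f|_\Sigma$, and $S:\ell_2(\Sigma)\to L_q(G)$, $S(c)=\sum_\gamma c(\gamma)\gamma$, are $L_1(G)$-morphisms once $\ell_2(\Sigma)$ is equipped with the pointwise module structure $a\cdot c=\wh a|_\Sigma\cdot c$. Both identities $M(a*f)=\wh a|_\Sigma\cdot M(f)$ and $a*S(c)=S(\wh a|_\Sigma\cdot c)$ reduce to $a*\gamma=\wh a(\gamma)\gamma$ on characters plus density. Since $\|\wh a|_\Sigma\|_{\ell_\infty}\le\|a\|_{L_1}$ and $\Omega$ is an $\ell_\infty(\Sigma)$-centralizer,
\[
\mho_\Sigma^{pq}(a*f)-a*\mho_\Sigma^{pq}(f)\;=\;S\bigl[\Omega(\wh a|_\Sigma\cdot M f)-\wh a|_\Sigma\cdot\Omega(M f)\bigr]
\]
has $L_q$-norm at most $\|S\|\,C(\Omega)\,\|M\|\,\|a\|_{L_1}\,\|f\|_{L_p}$, which is the desired $L_1$-centralizer estimate; quasilinearity passes through the sandwich $S\circ\Omega\circ M$ automatically.

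Turning to nontriviality, suppose for contradiction there is an $L_1(G)$-morphism $L:L_p^0(G)\to L_q(G)$ with $\|\mho_\Sigma^{pq}(f)-L(f)\|_{L_q}\le K\|f\|_{L_p}$ on polynomials. By Lemma~\ref{lem:morphisms} such an $L$ is a multiplier $L(f)=\F^{-1}(a\wh f)$ for some function $a:\Gamma\to\mathbb C$. Testing on $f=S(c)$ for $c\in\ell_2^0(\Sigma)$, one has $\wh{Sc}|_\Sigma=c$, so $L(Sc)=S(a|_\Sigma\cdot c)$ and therefore
\[
\mho_\Sigma^{pq}(Sc)-L(Sc)\;=\;S\bigl(\Omega(c)-a|_\Sigma\cdot c\bigr),
\]
a polynomial whose spectrum lies inside $\Sigma$. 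Combining Pisier's comparison (\ref{eq:pisier}) with Khintchine's inequality (\ref{eq:khintchine}) then gives $\|S(x)\|_{L_q}\sim\|x\|_{\ell_2(\Sigma)}$ for every finite $q$; on the domain side $\|Sc\|_{L_p}\le C_p\|c\|_{\ell_2(\Sigma)}$ for every finite $p$. Chaining these two Sidon estimates yields
\[
\|\Omega(c)-a|_\Sigma\cdot c\|_{\ell_2(\Sigma)}\;\le\;C_{p,q}K\,\|c\|_{\ell_2(\Sigma)}
\]
for every $c\in\ell_2^0(\Sigma)$, and since $c\mapsto a|_\Sigma c$ is an $\ell_\infty(\Sigma)$-morphism on finitely supported vectors, this exhibits $\Omega$ as a trivial centralizer, contradicting the hypothesis.

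The main obstacle I anticipate is the extreme case $p=\infty$, where the Sidon upper estimate only provides $\|Sc\|_\infty\sim\|c\|_{\ell_1(\Sigma)}$ rather than $\|c\|_{\ell_2(\Sigma)}$, so the chain above degrades to $\|\Omega(c)-a|_\Sigma\cdot c\|_{\ell_2}\lesssim\|c\|_{\ell_1}$, which is strictly weaker than triviality of $\Omega$ in the $\ell_2$-sense. I expect this endpoint to require a separate argument, exploiting the dual Sidon inequality $\|\wh f|_\Sigma\|_{\ell_1}\le S(\Sigma)\|f\|_\infty$ in order to test on more economical lifts of prescribed Fourier data on $\Sigma$, or a reduction using that $L_\infty^0=P(G)$ coincides with $L_p^0$ as a set so that information gathered for finite $p$ can be carried over. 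That bookkeeping is where I expect the proof to demand genuinely more care than in the bulk of the argument sketched above.
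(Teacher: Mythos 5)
Your verification that $\mho_\Sigma^{pq}$ is an $L_1(G)$-centralizer is correct and matches the paper's approach (both rest on the intertwining identities $M(a*f)=\wh a|_\Sigma\cdot Mf$ and $a*S(c)=S(\wh a|_\Sigma c)$ and the bound $\|\wh a|_\Sigma\|_\infty\le\|a\|_1$). For nontriviality your route is genuinely different, and for $p<\infty$ it is both valid and arguably cleaner than the paper's. Instead of the paper's composition trick ($\Omega = M\circ\mho\circ S$ with $M:L_q\to\ell_2(\Sigma)$ bounded, which forces $q>1$), you invoke Lemma~\ref{lem:morphisms} to write any hypothetical splitting morphism $L$ as a multiplier $f\mapsto\F^{-1}(a\wh f)$, then test on $f=Sc$: since $L(Sc)=S(a|_\Sigma c)$ automatically lands in $S[\ell_2(\Sigma)]$, the two-sided Sidon estimate $\|Sx\|_{L_q}\sim\|x\|_{\ell_2(\Sigma)}$, valid for every finite $q\ge1$, transfers the bound back to $\ell_2(\Sigma)$. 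This handles $q=1$ painlessly; the paper cannot do this directly because a general linear (not morphism) approximant need not map into the range of $S$, which is why it falls back on a cotype-2 argument with $Z(\Omega)$ and $Z(S\circ\Omega)$ for the $q=1$ case.

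The case $p=\infty$, however, is a real gap, and the paper does genuinely more work there. You correctly diagnose that $\|Sc\|_{L_\infty}\sim\|c\|_{\ell_1(\Sigma)}$, so your chain only yields $\|\Omega(c)-a|_\Sigma c\|_{\ell_2}\lesssim\|c\|_{\ell_1}$, strictly weaker than centralizer-triviality of $\Omega$. Neither of your two suggested repairs closes the gap. The dual Sidon inequality $\|\wh f|_\Sigma\|_{\ell_1}\le S(\Sigma)\|f\|_\infty$ is a lower bound on $\|f\|_\infty$ in terms of $\|\wh f|_\Sigma\|_{\ell_1}$; it tells you that any $f$ with prescribed coefficients $c$ on $\Sigma$ has $\|f\|_\infty\gtrsim\|c\|_{\ell_1}$, so there is no ``economical'' lift in $\ell_2$-norm to exploit. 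The proposed reduction to finite $p$ also runs the wrong way: since $\|f\|_{L_p}\le\|f\|_\infty$, a trivializing estimate $\|\mho(f)-L(f)\|_{L_q}\le K\|f\|_\infty$ is \emph{weaker} than the one with $\|f\|_{L_p}$ on the right, so triviality at $p=\infty$ does not imply triviality at finite $p$. The paper resolves the endpoint by a different mechanism entirely: the $p=\infty$, $q=1$ case is reduced to Proposition~\ref{prop:ccky}, whose proof passes through finite cotype of the pushout, $p$-summing operators, Pietsch factorization through some $L_p(\mu)$, and Maurey's extension theorem. None of that machinery is present in your sketch, so the endpoint $p=\infty$ remains open in your argument.
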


\begin{proof}[Sketch of the Proof]
The proof that $\mho_\Sigma^{pq}$ is an $L_1$-centralizer is roughly the same as that of Lemma~\ref{lem:*vs.}(a), using the identities $M(a*f)=(Ma)(Mf)$ for $a,f\in L_1$. In particular, note that if $f$ is a polynomial we have $a*S(c)=S((Ma)c)$ for $a\in L_1$ and finitely supported $c$. The nontrivial character of  $\mho_\Sigma^{pq}$ is clear when $1<p,q<\infty$: if $\mho_\Sigma^{pq}$ is trivial as a quasilinear map then so is the composition 
$$
\xymatrixcolsep{4pc}
\xymatrix{
\Omega: \ell_2^0(\Sigma) \ar[r]^S_{\text{bounded}} & L_p^0(G) \ar[r]^{\mho_\Sigma^{pq}} & L_q(G) \ar[r]^M_{\text{bounded}} & \ell_2(\Sigma),
}
$$
 which is not the case. 

For finite $p$ and $q=1$  one can use the following argument: since $S:\ell_2(\Sigma)\To L_p$ is bounded it suffices to see that $S\circ\Omega=\mho_\Sigma^{pq}\circ S$ is not trivial from $\ell_2(\Sigma)$ to $L_1$. But $S:\ell_2\To L_1$ is an (isomorphic) embedding and so $Z(\Omega)$ is isomorphic to a subspace of
$Z(S\circ\Omega)$: just map each $(c,d)$ in $\ell_2(\Sigma)\oplus_\Omega \ell_2^0(\Sigma)$ to $(S(c), d)$ in  $L_1(G)\oplus_{S\circ\Omega}\ell_2^0(\Sigma)$. This implies that $Z(S\circ\Omega)$ cannot be isomorphic to the direct sum $L_1(G)\oplus\ell_2(\Sigma)$ since this space has cotype 2 and $Z(\Omega)$ does not. 

The case $p=\infty, 1<q<\infty$ follows by duality. The case $p=\infty, q=1$ (which  is closely related to the material of Section~\ref{sec:main} and implies all cases) follows from the proof of 
\cite[Theorem 5.1]{ccky} and requires some skill in absolutely summing operators. The argument is given later in the proof of Proposition~\ref{prop:ccky} for the sake of completeness.
%
\end{proof}
Though nontrivial, these ``new'' constructions have much less symmetries than that appearing in Section~\ref{sec:global}: $\mho_\Sigma$ cannot commute with the product by characters unless $G$ is finite, although it  commutes with the translations of $G$ if $\Omega$ commutes with the unitaries of $\ell_\infty(\Sigma)$. 

Finally, the case $p=1$ is resolved with quite general arguments:

\begin{prop}\label{prop:ExtL1}
$\Ext_{L_1}(L_1, Y)=0$ in the following cases: $Y$ is a dual module, in particular when $Y=L_p$ for $1<p\leq\infty$, or if $Y=L_1, C(G)$.
\end{prop}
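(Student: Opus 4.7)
The plan is to exploit the amenability of $L_1(G)$ for a compact abelian group $G$, as recalled in the preliminaries. Throughout, fix an extension of $L_1$-modules $0\to Y\to Z\to L_1(G)\to 0$ and write $\pi$ for the quotient map.

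For $Y$ a dual module (which already covers $Y=L_p$ with $1<p\le\infty$, since $L_\infty=L_1^*$ and $L_p=(L_{p'})^*$ for $1<p<\infty$), I would invoke the amenability criterion from the preliminaries and reduce the problem to producing a Banach-space splitting. I would begin with a bounded homogeneous section $B:L_1\to Z$ and, mirroring the construction in Lemma~\ref{lem:all}, set $L(f)=g*B(h)$ for a Cohen factorization $f=g*h$ provided by the bounded approximate identity of $L_1$. The resulting $L$ is bounded, lifts $f$, and depends on the factorization only up to an error in $Y$; averaging these choices against a virtual diagonal of $L_1$ (which exists by amenability) and using the dual structure of $Y$ to realize the average inside $Y$ would turn $L$ into a bona fide bounded linear section. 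Amenability then promotes this Banach splitting to a module splitting.

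For $Y=L_1(G)$, I would push out along the isometric inclusion of $L_1$-modules $\imath:L_1(G)\hookrightarrow M(G)=C(G)^*$. Since $M(G)$ is a dual $L_1$-module, the previous case ensures that the pushout extension $0\to M(G)\to Z'\to L_1\to 0$ splits, which amounts to saying that $\imath$ extends to an $L_1$-morphism $\tilde\imath:Z\to M(G)$. The remaining point is that $\tilde\imath$ actually takes values in $L_1(G)$, so it provides the desired module retraction onto $Y$. To see this, first observe that $Z=L_1*Z$: the subspace $L_1*Z$ contains $L_1*Y=L_1*L_1=L_1=Y$ (Cohen factorization applied to $L_1$) and surjects onto $L_1*L_1=L_1$ through $\pi$, so it already equals $Z$. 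Cohen factorization in the module $Z$ then gives $z=a*z_0$ for every $z\in Z$, and $\tilde\imath(z)=a*\tilde\imath(z_0)\in L_1*M(G)\subset L_1(G)$ because $L_1(G)$ is an ideal of $M(G)$.

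For $Y=C(G)$ the same recipe applies, now with the isometric embedding $C(G)\hookrightarrow L_\infty(G)=L_1(G)^*$ of $L_1$-modules. The pushout extension is by the dual module $L_\infty$, it splits by the first case, and yields $\tilde\jmath:Z\to L_\infty$ extending the inclusion. Essentiality of $Z$ gives $\tilde\jmath(z)=a*\tilde\jmath(z_0)\in L_1*L_\infty\subset C(G)$, using the classical fact that the convolution of an $L_1$ and an $L_\infty$ function is continuous. The main technical difficulty is the Banach-space splitting in the dual case; once that is at hand, the other two cases are formal reductions resting on the absorption properties $L_1*M(G)\subset L_1$ and $L_1*L_\infty\subset C(G)$.
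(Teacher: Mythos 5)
Your treatment of the cases $Y=L_1$ and $Y=C(G)$ is a genuinely different route from the paper's, and it works. You push out along the isometric module embeddings $L_1\hookrightarrow M(G)$ and $C(G)\hookrightarrow L_\infty$, invoke the dual-module case to split the pushout (which is equivalent to the embedding extending to a homomorphism $\tilde\imath:Z\To M(G)$, resp.\ $\tilde\jmath:Z\To L_\infty$), and then use essentiality of $Z$ plus Cohen factorization together with the absorption properties $L_1*M(G)\subset L_1$ and $L_1*L_\infty\subset C(G)$ to force the extending homomorphism to land in $Y$. The paper instead argues at the level of centralizers: by Lemma~\ref{lem:all} a self-extension of $L_1$ comes from a centralizer $\Phi:L_1^0\To L_1$; composing with the inclusion into $M(G)$ yields a trivial centralizer, hence a morphism $\phi:P(G)\To M(G)$ within bounded distance of $\Phi$, and Lemma~\ref{lem:morphisms} shows $\phi$ already lands in $P(G)\subset L_1$ (same idea with $L_\infty$ in place of $M(G)$ for $C(G)$). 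Both arguments pivot on the same absorption phenomenon; yours is more homological, the paper's more harmonic-analytic, and each is a legitimate alternative.

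Your first case, however, contains a gap. The paper disposes of the dual-module case by citing Lindenstrauss' lifting: the Banach-space sequence $0\To Y\To Z\To L_1\To 0$ splits because $Y$ is a dual space and $L_1$ is an $\mathscr L_1$-space, and then amenability of $L_1(G)$ upgrades the Banach splitting to a module splitting. Your proposed direct construction does not yield a bounded \emph{linear} section. The assignment $L(f)=g*B(h)$ is not linear in $f$: there is no way to choose the Cohen factorization $f=g*h$ linearly, and $B$ is only homogeneous. The ``errors in $Y$'' do parametrize the indeterminacy, but averaging those choices against a virtual diagonal is not a well-posed operation on that family; and a section has to take values in $Z$, not in $Y$, so ``using the dual structure of $Y$'' does not bring a weak-$*$ average back into the right space. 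The clean route is to invoke Lindenstrauss as a black box; and since the other two cases of your argument lean on the dual-module case, the whole proposal depends on closing this gap.
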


\begin{proof} The first part is clear: if $Y$ a dual Banach space, then each exact sequence  of Banach spaces $0\To Y\To Z\To L_1\To 0$ splits (in the category  of Banach spaces) by Lindenstrauss' lifting \cite{lind} and then the amenability of $L_1$ implies the same in the category of $L_1$-modules.

The case $Y=L_1$ is as follows: we know from Lemma~\ref{lem:all} that each self-extension of $L_1$ comes from a centralizer $\Phi:L_1^0\To L_1$. If we consider $L_1$ as a submodule of $M(G)$, then the composition
$$
\xymatrixcolsep{3.75pc}
\xymatrix{
L_1^0\ar[r]^\Phi & L_1 \ar[r]^-{\text{``inclusion''}} & M(G)
}
$$
is a centralizer, which is necessarily trivial since $\Ext_{L_1}(L_1,M(G))=0$ by the previous case. Hence there is a morphism $\phi:L_1^0\To M(G)$ such that $\|\Phi(f)-\phi(f)\|_{M(G)}\leq M\|f\|_{L_1}$ for every $f\in L_1^0$. By Lemma~\ref{lem:morphisms}, $\phi$ takes values in $L_1$ and so $\Phi-\phi$ is bounded from $L_1^0$ to $L_1$. This shows that $\Ext_{L_1}(L_1,L_1)=0$.

The case $Y=C(G)$ is analogous, replacing $M(G)$ by $L_\infty(G)$.
\end{proof} 
It is not true in general that $\Ext_{L_1}(X, M(G))=0$ implies  $\Ext_{L_1}(X, L_1)=0$.
To see this consider the inclusion $L_1\To M(G)$ which has complemented range thanks to Lebesgue decomposition theorem (see \cite[Theorem 4.3.2]{cohn}), i.e., the sequence
\begin{equation}\label{eq:L1M}
\xymatrix{
0\ar[r] &L_1\ar[r] & M(G) \ar[r] &M(G)/L_1 \ar[r] & 0
}
\end{equation}
splits in the category of Banach spaces. Set $X= M(G)/L_1$. Then $X$ is an $\mathscr L_1$-space and so $\Ext_{L_1}(X, M(G))=0$ since $\Ext(X, M(G))=0$ (again by Lindenstrauss's lifting) and $L_1$ is amenable. However $\Ext_{L_1}(X, L_1)\neq 0$ because (\ref{eq:L1M}) does not split in the category of $L_1$-modules. Indeed, if $P:M(G)\To L_1$ were an $L_1$-module projection and $f=P(\delta_e)$, where $\delta_e$ is the unit mass at the unit of $G$, then $
g=Pg=P(g*\delta_e)=g*f
$ for every $g\in L_1$,
which cannot be unless $G$ is discrete (hence finite). 

The algebra $L_1$ {\em itself} is not a projective module, as can be seen in the following {\em generic} examples. Let $\fin(\Gamma)$ be family of finite subsets of $\Gamma$ and for each $F\in \fin(\Gamma)$ let $P_F\subset L_1$ be the subspace of polynomials spanned by $F$. Consider the amalgam $\ell^1(\fin(\Gamma), P_F)$ with the obvious structure of $L_1$-module: $a*(f_F)_F= (a*f_F)_F$. The sum operator $\operatorname{sum}(f_F)=\sum_F f_F$ is a surjective homomorphism from $\ell^1(\fin(\Gamma), P_F)$ to $L_1$ and we have an extension
$$
\xymatrixcolsep{3pc}
\xymatrix{
0\ar[r] &\ker(\operatorname{sum})\ar[r] & \ell^1(\fin(\Gamma), P_F) \ar[r]^-{\text{sum}} &L_1 \ar[r] & 0
}
$$
This sequence does not split because the space $L_1$, which lacks the Radon-Nikod\'ym property, cannot be isomorphic to a subspace of $\ell^1(\fin(\Gamma), P_F)$, which has it.

A related example is the following. With the same notations, consider the amalgam of bounded families $c(\fin(\Gamma), P_F)=\{(f_F): \lim_F f_F \text{ exits in } L_1\}$ with the sup norm and the obvious $L_1$-module structure. Since the polynomials are dense in $L_1$ the limit operator $\operatorname{lim}: c(\fin(\Gamma), P_F)\To L_1$, which is clearly a homomorphism, is onto and we have an extension
$$
\xymatrixcolsep{3pc}
\xymatrix{
0\ar[r] & c_0(\fin(\Gamma), P_F) \ar[r] & c(\fin(\Gamma), P_F) \ar[r]^-{\text{lim}} &L_1 \ar[r] & 0.
}
$$
This sequence splits in the Banach category because $L_1$ has the BAP. However if $S: L_1\To c(\fin(\Gamma), P_F)$ were a right inverse homomorphism for the limit map, then writing $S(f)=(S_F(f))$ one easily infers that $S_F(f)=\sum_{\gamma\in F}\wh f(\gamma)\gamma$ and that $\|S_F\|\leq \|S\|$ for all finite $F\subset \Gamma$, which implies that $G$ is finite.


\subsection{Quasilinear maps commuting with translations and $L_1$-centralizers} 
It this section we prove that $L_\infty$-centralizers on $L_p$ that commute with translations are also $L_1$-centralizers when $1<p<\infty$.
Let, once again $G$ be a compact group and write $L_p^\infty=L_p\cap L_\infty$ for the subspace of all essentially bounded functions in $L_p$, which is a dense $L_\infty$-submodule of $L_p$ for all $p$ as well as an $L_1$-submodule for $1\leq p<\infty$.
As the reader can imagine we say that
a (quasilinear) map $\Phi: L_p^\infty\To L_p$ commutes with translations if $\Phi(f_y)=(\Phi f)_y$ for every $y\in G$ and every $f\in L_p^\infty$. 
The simplest examples are provided by
the ``continuous'' versions of the Kalton-Peck maps: if $\varphi:\R\To\mathbb C$ is a Lipschitz funtion vanishing at $0$ we define $\Phi(f)=f\varphi\big(\log\big(\|f\|_p/|f| \big)	\big)$ pointwise. These maps are actually $L_\infty$-centralizers and it is not hard to see that $\Phi$ is trivial if and only if $\varphi$ is bounded on $(-\infty, 0]$. See \cite[Theorem 3.1]{k-comm} for more general examples and note that, in view of the results in \cite{c-ma}, there is no point in considering two different exponents $p,q$ in the ensuing result.

\begin{prop}\label{prop:LooL1}
Let $1<p<\infty$.
Every $L_\infty$-centralizer  $\Phi: L_p^\infty\To L_p$ that commutes with translations is also an $L_1$-centralizer and, therefore, $Z(\Phi)$ is both an $L_\infty$-module and an $L_1$-module.
\end{prop}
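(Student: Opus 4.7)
My plan is to view convolution as the vector-valued average of translations:
\[
a*f \;=\; \int_G a(z)\, f_z\, dz \qquad\text{in } L_p(G).
\]
This Bochner integral is well-defined because $z\mapsto f_z$ is continuous from $G$ to $L_p$ for $p<\infty$. Since $\Phi$ is homogeneous and commutes with translations, $\Phi(\lambda f_z) = \lambda (\Phi f)_z$, so morally $\Phi$ passes through this integral with an error one can control via quasilinearity plus the $\mathscr K$-space property of $L_p$, which is available for $1<p<\infty$.

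To execute this I would discretize: choose a measurable partition $\{E_k\}_{k=1}^N$ of $G$ with $z_k\in E_k$ and weights $\lambda_k = a(z_k)|E_k|$, and consider the Riemann sum $x_N=\sum_k \lambda_k f_{z_k}\in L_p^\infty$. Applied to this finite sum, the $\mathscr K$-space bound gives
\[
\Bigl\|\Phi(x_N) - \sum_k \lambda_k (\Phi f)_{z_k}\Bigr\|_p \;\le\; K(L_p)\, Q(\Phi)\sum_k|\lambda_k|\,\|f\|_p,
\]
after using homogeneity and translation invariance to write $\Phi(\lambda_k f_{z_k}) = \lambda_k (\Phi f)_{z_k}$. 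Refining the partition, one has $\sum_k|\lambda_k|\to\|a\|_1$, $x_N\to a*f$ in $L_p$ and $\sum_k\lambda_k (\Phi f)_{z_k} \to a*\Phi(f)$ in $L_p$, so passing to the limit yields the desired
\[
\|\Phi(a*f) - a*\Phi(f)\|_p \;\le\; K(L_p)\, Q(\Phi)\, \|a\|_1\, \|f\|_p,
\]
after which Lemma~\ref{lem:modiffcen} delivers the $L_1$-module structure on $Z(\Phi)$.

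The main obstacle is the passage to the limit $\Phi(x_N)\to\Phi(a*f)$, because $\Phi$ is not continuous on $L_p^\infty$. The key auxiliary observation I would use is that $\Phi$ is bounded as a map $L_\infty(G)\to L_p(G)$: the $L_\infty$-centralizer identity with $f=1_G$ and $b\in L_\infty$ gives
\[
\|\Phi(b)\|_p \;\le\; \|b\,\Phi(1_G)\|_p + C\|b\|_\infty \;\le\; (\|\Phi(1_G)\|_p + C)\|b\|_\infty.
\]
For continuous data $a,f\in C(G)$ the map $z\mapsto a(z)f_z$ is continuous into $L_\infty$, so $x_N\to a*f$ uniformly; this $L_\infty$-convergence combined with the $L_\infty\to L_p$ boundedness of $\Phi$ and one further quasilinearity estimate yield $\|\Phi(a*f) - \Phi(x_N)\|_p\to 0$ and close the argument for continuous data. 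The extension to arbitrary $a\in L_1$ and $f\in L_p^\infty$ follows by density, using Fej\'er smoothings $f_n = K_n*f$ (which preserve $\|f\|_\infty$) together with the bound $\|a*(f-f_n)\|_\infty = \|(a - a*K_n)*f\|_\infty \le \|a - a*K_n\|_1\|f\|_\infty \to 0$ to once again reduce matters to the $L_\infty\to L_p$ boundedness of $\Phi$ and quasilinearity. This last step is where I expect the technicalities to concentrate.
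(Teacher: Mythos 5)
Your discretization strategy---write $a*f=\int_G a(z)f_z\,dz$, take Riemann sums $x_N=\sum_k\lambda_kf_{z_k}$, hit the finite sum with the $\mathscr K$-space estimate, and pass to the limit---is genuinely different from the paper's route. The paper instead works abstractly in the twisted sum: it shows $y\mapsto (g_y,f_y)$ is continuous into $L_p\oplus_\Phi L_p^\infty$, establishes that the Bochner integral $\int_G a(y)(g_y,f_y)\,dy$ exists in $Z(\Phi)$ (using that translations are \emph{isometries} for $\|\cdot\|_\Phi$ because $\Phi$ commutes with them), and then identifies that integral with $(a*g,a*f)$ by composing with the continuous inclusion $L_p\oplus_\Phi L_p^\infty\to L_1\oplus_1 L_1$, which in turn rests on the nonlinear rigidity result of \cite{c-ma} to get $\|\Phi(f)-cf\|_q\lesssim\|f\|_p$ for $q<p$. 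You reach directly for the centralizer inequality, and your key auxiliary facts (boundedness of $\Phi:L_\infty\to L_p$, uniform continuity of $z\mapsto a(z)f_z$ into $L_\infty$ for continuous data) are exactly right.

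Two problems, one minor and one not. The minor one: $\|\Phi(a*f)-\Phi(x_N)\|_p\to 0$ is simply not true---quasilinearity only gives $\|\Phi(a*f)-\Phi(x_N)-\Phi(a*f-x_N)\|_p\leq Q(\|x_N\|_p+\|a*f-x_N\|_p)$, whose right side stays of order $Q\|a*f\|_p$ rather than vanishing. But that's fine: you don't need convergence, only a bound linear in $\|a\|_1\|f\|_p$, and rearranging the chain gives exactly that, so your continuous-data argument survives after a cosmetic repair. The genuine gap is the final density step. Passing from continuous $f$ to $f\in L_p^\infty$ via $f_n=K_n*f$, you must control $\|a*\Phi(f_n)-a*\Phi(f)\|_p\leq\|a\|_1\|\Phi(f_n)-\Phi(f)\|_p$. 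But $\Phi$ is not $L_p$-continuous, and the only available estimate is
\[
\|\Phi(f_n)-\Phi(f)\|_p\;\leq\; Q\big(\|f_n\|_p+\|f-f_n\|_p\big)+\|\Phi(f-f_n)\|_p
\;\leq\; Q\big(\|f_n\|_p+\|f-f_n\|_p\big)+C'\|f-f_n\|_\infty ,
\]
and $\|f-f_n\|_\infty$ does \emph{not} tend to zero for discontinuous $f\in L_p^\infty$ (indeed $C(G)$ is $L_\infty$-closed). So you end with an unremovable $\|a\|_1\|f\|_\infty$ term, which is not a centralizer bound. The paper avoids this precisely because the twisted quasinorm $\|(g,f)\|_\Phi=\|g-\Phi(f)\|_p+\|f\|_p$ absorbs the discontinuity of $\Phi$: the approximation $(g',f')=(g-\Phi(f-f'),f')$ moves the error $\Phi(f-f')$ into the first coordinate and makes $L_p\oplus_\Phi C(G)$ dense in $L_p\oplus_\Phi L_p^\infty$, while translation-invariance of $\|\cdot\|_\Phi$ extends the continuity of $y\mapsto (g_y,f_y)$ from a dense set to all of $L_p\oplus_\Phi L_p^\infty$. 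To close your argument for general $f$ you would need some version of this cancellation (or the $\mathscr L$-splitting from \cite{c-ma} that the paper uses to embed $Z(\Phi)$ into $L_1\oplus L_1$); without it, the direct density argument does not deliver the bound on all of $L_p^\infty$.
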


\begin{proof}[Sketch of the Proof]
The proof consists in showing that $L_p\oplus_\Phi L_p^\infty$ is an $L_1$ module under the product $a*(g,f)=(a*g,a*f)$.  This implies that $\Phi$ is an $L_1$-centralizer; see Lemma~\ref{lem:modiffcen}.
 We proceed as in the standard proofs of the inequality   $\|a*f\|_B\leq \|a\|_1\|f\|_B$ when $B$ is a homogenenous Banach space, using vector-valued integration; see for instance Katznelson \cite[p. 203]{katz}.
 
Let us first check that:
\begin{itemize}
\item[(a)] The inclusion $L_p\oplus_\Phi L_p^\infty\To L_1\oplus_1 L_1$ is continuous.
\item[(b)] The quasinorm $\|(\cdot,\cdot)\|_\Phi$ is equivalent to a norm on $L_p\oplus_\Phi L_p^\infty$.
\item[(c)] $(g_y,f_y)\To (g_x,f_x)$ in $L_p\oplus_\Phi L_p^\infty$ as $y\To x$ in $G$.
\end{itemize}
To see (a) pick $1<q<p$ and consider $\Phi$ as an $L_\infty$-centralizer $L_p^\infty\To L_q$. By the main result of \cite{c-ma}, $\Phi$ is then trivial and since every morphism of $L_\infty$ modules $L_p^\infty\To L_q$ has the form $f\longmapsto hf$ for some $h\in L_q$ we have $\|\Phi(f)-hf\|_q\leq M\|f\|_p$ for some constant $M$. Averaging over $G$, using the symmetries of $\Phi$ and the reflexivity of $L_q$ one obtains that $\|\Phi(f)-cf\|_q\leq M\|f\|_p$, where $c=\int_Gh(x)dx$, and we are done. (b) is clear since $L_p$ is a $\mathscr K\!$-space for $p>1$. To prove (c) we first observe that since $(g,f)\longmapsto (g_y,f_y)$ is an ``isometric'' automorphism of $L_p\oplus_\Phi L_p^\infty$ it suffices to consider the case in which $x$ is the unit of $G$ and then prove that $(g_y,f_y)\To (g,f)$ for $(g,f)$ in any dense subset of $L_p\oplus_\Phi L_p^\infty$. Next, we observe that the points $(g,f)$ with $f$ continuous form a dense set: pick $(g,f)\in L_p\oplus_\Phi L_p^\infty$ and $\varepsilon>0$. Take $f'\in C(G)$ such that $\|f-f'\|<\varepsilon$ and then set $g'=g-\Phi(f-f')$. We have 
$
\|(g,f)-(g',f')\|_\Phi=\|g-g'-\Phi(f-f')\|_p+\|f-f'\|_p<\varepsilon.
$
To finish, observe that $\Phi$ is bounded from $L_\infty$ to $L_p$ (this is almost trivial) and so $\|\Phi(f)\|_p\leq C\|f\|_\infty$ for some constant $C$ independent of $f$. Take $(g,f)\in L_p\oplus_\Phi L_p^\infty$ with $f$ continuous. One has
$$
\|(g_y,f_y)- (g,f)\|_\Phi=\|g_y-g-\Phi(f_y-f)\|_p+\|f_y-f\|_p.
$$
Now, if $y$ tends to the identity of $G$, then $\|g_y-g\|_p$ and $\|f_y-f\|_p$ go to zero and $\|\Phi(f_y-f)\|_p\leq C\|f_y-f\|_\infty\To 0$ since $f$ is uniformly continuous.

To complete the proof, pick $f\in L_p^\infty, g\in L_p, a\in L_1$.
 We want to see that
\begin{equation}\label{eq:bochner}
(a*g,a*f)=
 {\int_Ga(y)(g_y, f_y)\,dy}
\end{equation}
where the integral is taken in $Z(\Phi)$ 
in the Bochner sense. The function $y\in G\longmapsto a(y)(g_y,f_y)\in L_p\oplus_\Phi L_p^\infty$ is measurable since $a$ is and $y\longmapsto (g_y,f_y)$ is continuous. To check integrability,
let $\|(\cdot,\cdot)\|$ be a norm on $ L_p\oplus_\Phi L_p^\infty$ so that $\|(\cdot,\cdot)\|\leq \|(\cdot,\cdot)\|_\Phi\leq  M\|(\cdot,\cdot)\|$. Then 
$$
\int_G\|a(y)(g_y, f_y)\|\,dy =  \int_G|a(y)|\|(g_y, f_y)\|\,dy\leq 
\int_G|a(y)|\|(g_y, f_y)\|_\Phi\,dy= \|a\|_{L_1}\|(g,f)\|_\Phi.
$$
Thus the integral in (\ref{eq:bochner}) makes sense in $Z(\Phi)$ and 
$
\big\|\int_G a(y)(g_y, f_y)\,dy\big\|_\Phi\leq M \|a\|_{L_1}\|(g,f)\|_\Phi
$. Finally, as the formal inclusion of $L_p\oplus_\Phi L_p^\infty$ into $L_1\oplus L_1$ is continuous one has
$$
\overbrace{\int_Ga(y)(g_y, f_y)\,dy}^{\text{integration in $Z(\Phi)$}}=
\overbrace{\int_Ga(y)(g_y, f_y)\,dy}^{\text{integration in $L_1\oplus L_1$}}= \Big(\overbrace{\int_Ga(y)g_y\,dy}^{\text{integration in $L_1$}}, \overbrace{\int_Ga(y)f_y\,dy}^{\text{integration in $L_1$}}\Big)=(a*g,a*f)\qedhere
$$
\end{proof}

We do not known if every quasilinear map $L_p^\infty\To L_q$ that commutes with translations is an $L_1$-centralizer for $1<p,q<\infty$.



\section{$L_1$-centralizers on the Hardy classes}

When specialized to $G=\mathbb T$ the preceding constructions provide extensions of the Hardy classes in the category of $L_1$-modules. We refer the reader to Duren \cite[Chapter~2]{duren} for the definition and properties of the Hardy classes $H_p$ and much more. For our purposes we can define
$$
H_p=\{f\in L_p: \wh f(n)=0 \text{ for all } n<0\}\qquad(1\leq p\le\infty)
$$
and we set $H_p^0=H_p\cap L_p^0$.
If $1\leq p<\infty$, then $H_p$ agrees with the closure  of the ordinary polynomials $a_0+a_1z+\cdots+a_kz^k$ (with $k\geq 0$) in $L_p=L_p(\mathbb T)$.
For $p=\infty$ the closure of the polynomials in $L_\infty(\mathbb T)$ is the disc algebra, denoted by $A$, which is much smaller than $H_\infty$. 

In the following result we say that an $L_\infty$-centralizer $\Phi:L_p^\infty\To L_p$ (or $L_p^0\To L_p$)  is {\em symmetric} if for every measure preserving
bijection $\sigma:G\To G$ one has $\Phi(f\circ\sigma)=   (\Phi f)\circ\sigma$. Also, we denote by $\gamma_n:\T\To\T$ the character defined by $\gamma_n(z)=z^n$.

\begin{thm}$\,$
\begin{itemize}
\item[{\rm (a)}] Let $\Sigma\subset \mathbb Z_+$ be a Sidon set and let $\Omega:\ell_2^0(\Sigma)\To \ell_2(\Sigma)$ be a nontrivial $\ell_\infty(\Sigma)$-centralizer. Given $p\in(1,\infty), q\in[1,\infty)$, the map
$\mho_\Sigma^{pq}: H_p^0\To H_q$ defined by  $\mho_\Sigma^{pq}(f)=S_q\Omega(\wh f|_\Sigma)$ is a nontrivial $L_1$-centralizer.

\item[{\rm (b)}] Let $\Omega: L_p^0\To L_p$ be a  symmetric $L_\infty$-centralizer, $1<p<\infty$. Then there is an $L_1$-centralizer $\Omega_+:  H_p^0\To H_p$ such that $\|\Omega(f)-\Omega_+(f)\|_p\leq M\|f\|_p$ for some constant $M$ and all $f\in H^0_p$. Any other quasilinear map satisfying the corresponding estimate is strongly equivalent to $\Omega_+$. Moreover, if $1<p<\infty$, then $\Omega_+$ is trivial (as a quasilinear map) if and only if $\Omega$ is. 

\item[{\rm (c)}] Let $\varphi$ be an essentally concave unbounded Lipschitz function and let $\Omega:\ell_2^0(\mathbb N)\To \ell_2(\mathbb N)$ be the induced Kalton-Peck map. Then, for every $1\leq q\leq 2\leq p\leq \infty$ the map $\mho^{pq}: H_p^0\To H_q$ defined by  $\mho^{pq}(f)=S_q\Omega(\wh f)$ is a nontrivial $L_1$-centralizer.
\end{itemize}
\end{thm}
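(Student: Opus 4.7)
My plan is to handle the three parts separately, each leveraging earlier results of the paper.

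For part (a), I rerun the proof of Proposition~\ref{prop:LpLq} while observing that the containment $\Sigma\subset\Z_+$ keeps everything inside the Hardy classes: the summation operator $S_q(c)=\sum_{n\in\Sigma}c(n)z^n$ lands in $H_q$ and is still an isomorphic embedding of $\ell_2(\Sigma)$ by Pisier plus Khintchine, while the Fourier restriction $M_p:H_p\to\ell_2(\Sigma)$, $f\mapsto\wh f|_\Sigma$, is bounded and surjective for $1<p<\infty$ with $M_qS_q=\mathbf I$. The $L_1$-centralizer estimate follows from the same computation as Proposition~\ref{prop:LpLq}(a), using $\widehat{a*f}=\wh a\,\wh f$. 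Nontriviality for $1<p,q<\infty$ is obtained from the factorization $\Omega=M_q\circ\mho_\Sigma^{pq}\circ S_p$; the case $q=1$ is handled by the cotype argument from the proof of Proposition~\ref{prop:LpLq}, since $S_1:\ell_2(\Sigma)\to H_1$ is still an isomorphic embedding (the Pisier estimate holds in $L_1$ and therefore in $H_1$).

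For part (b), the candidate is $\Omega_+:=P_+\circ\Omega|_{H_p^0}$, where $P_+:L_p\to H_p$ is the Riesz projection, bounded for $1<p<\infty$. Quasilinearity with constant $\le\|P_+\|Q(\Omega)$ is immediate. For the $L_1$-centralizer property I combine two facts: symmetry of $\Omega$ implies translation invariance, so by Proposition~\ref{prop:LooL1} $\Omega$ is already an $L_1$-centralizer on $L_p$; and $P_+$, being a Fourier multiplier, is an $L_1$-module morphism, so the composition inherits the centralizer estimate. The main obstacle is the norm estimate $\|\Omega(f)-\Omega_+(f)\|_p=\|P_-\Omega(f)\|_p\le M\|f\|_p$ on $H_p^0$: the quasilinear map $P_-\circ\Omega$ is not obviously bounded on $H_p^0$ because $\Omega$ itself is not bounded, yet this is exactly what has to be proved. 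I would attack it by exploiting the full force of the symmetry hypothesis --- the conjugation $z\mapsto\bar z$ is a measure-preserving bijection of $\T$ swapping $H_p$ and $H_p^-$, and together with an invariant-mean averaging in the style of Proposition~\ref{prop:trivial-bounded} one should reduce boundedness of $P_-\circ\Omega$ to the existence of a genuine morphism which, by Lemma~\ref{lem:morphisms}, is given by a multiplier and can be estimated directly. Uniqueness of $\Omega_+$ up to strong equivalence is automatic from the estimate; the triviality equivalence is clear in the direction ``$\Omega$ trivial $\Rightarrow\Omega_+$ trivial'' by composition, while the converse follows by using symmetry and the decomposition $L_p^0=H_p^0+(H_p^-)^0$ (modulo constants) to transfer triviality from $H_p^0$ to all of $L_p^0$.

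For part (c), the map $\mho^{pq}$ is precisely the global construction of Section~\ref{sec:global} restricted to the Hardy setting, namely the composition $H_p^0\hookrightarrow H_2^0\xrightarrow{\mathcal F}\ell_2^0(\Z_+)\xrightarrow{\Omega}\ell_2(\Z_+)\xrightarrow{\mathcal F^{-1}}H_2\hookrightarrow H_q$. Quasilinearity and the $L_1$-centralizer property transfer verbatim from Lemma~\ref{lem:*vs.}(a). As $\mho^{\infty 1}$ dominates all other cases through the usual factorization, nontriviality reduces to proving unboundedness of $\mho^{\infty 1}:H_\infty^0\to L_1$. The obstacle is that the Riesz product of case~$(\dagger)$ in Theorem~\ref{th:main} uses $\gamma_j+\gamma_j^{-1}$ and hence falls outside $H_p$. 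I would replace it by the one-sided Riesz product
\[
f=\prod_{1\le j\le N}\Bigl(1+\tfrac{i\gamma_j}{\alpha\sqrt N}\Bigr)\in H_p^0,
\]
where $(\gamma_j)=(z^{n_j})$ is a dissociate sequence in $\Z_+$ (say $n_j=3^j$). Since all products $\prod_{j\in S}\gamma_j$ are distinct characters sitting in $\Z_+$, we have $|\wh f(\gamma)|=(\alpha\sqrt N)^{-\ell(\gamma)}$ with the same combinatorics $\binom{N}{k}$ as in case~$(\ddagger)$, so Lemma~\ref{lem:riesz}(a$\ddagger$) applies word for word. The rest of the argument is a direct transcription of the $(\ddagger)$-case of Theorem~\ref{th:main}: Pisier plus Khintchine yield $\|f_1\|_1\ge 1/(C\alpha)$, and then \eqref{eq:esti} gives $\|\mho^{\infty 1}(f)\|_1\to\infty$ as $N\to\infty$, once $\alpha$ is chosen so that the bracket is positive.
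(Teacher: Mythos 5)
Your treatment of part (a) is essentially the paper's proof: both observe that for $\Sigma\subset\Z_+$ the summation operator lands in $H_q$ and rerun Proposition~\ref{prop:LpLq}.

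For part (b), however, there is a genuine gap. You correctly identify the candidate $\Omega_+=R\circ\Omega$ with $R$ the Riesz projection, and you correctly isolate the crux: proving that $P_-\circ\Omega$ is \emph{bounded} on $H_p^0$. But the symmetry-plus-invariant-mean argument you propose does not yield this. Averaging in the style of Proposition~\ref{prop:trivial-bounded} converts an \emph{already known} approximate linear map into a morphism; it does not produce boundedness from thin air. To use it you would first need to know that $\Omega|_{H_p^0}$ is trivial as a map into $L_p$, which is precisely what has to be shown. The paper closes this gap by quoting a nontrivial external input: Kalton's result (\cite[Theorem~7.3]{k-comm}) that a symmetric centralizer, restricted to $H_p^0$, is at bounded distance from an $H_p$-valued map. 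Without that ingredient (or a new proof of it) your sketch cannot be completed.

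For part (c), the proposed one-sided Riesz product $f=\prod_{j\le N}\bigl(1+i\gamma_j/(\alpha\sqrt N)\bigr)$ with $\gamma_j=z^{3^j}$ is \emph{not} bounded in $L_\infty$, and the argument collapses. The factor moduli are $\lvert 1+ic\,\gamma_j\rvert^2 = 1 - 2c\sin\theta_j + c^2$ (with $c=1/(\alpha\sqrt N)$), which is perturbed \emph{linearly} in $c$ rather than quadratically; the cancellation that keeps the two-sided product $(\dagger)$ bounded comes from $\lvert 1+ic\cos\theta\rvert^2 = 1+c^2\cos^2\theta$, and the order-$2$ case $(\ddagger)$ works only because $\gamma_j$ is then real. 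Concretely, since $\{3^j\}$ is a Sidon set, $\|f\|_\infty \ge S^{-1}\sum_j|\wh f(3^j)| = S^{-1}\sqrt N/\alpha\to\infty$, so the $L_\infty$-control that Lemma~\ref{lem:riesz} supplies is simply absent. The paper instead derives (c) from Theorem~\ref{th:main} via Proposition~\ref{prop:trivial-bounded-T}(a): since $\mho^{\infty1}$ satisfies $\mho^{\infty1}(\gamma_n f)=\gamma_n\mho^{\infty1}(f)$ and the $L_\infty$- and $L_1$-norms are invariant under multiplication by $\gamma_n$, any $f\in C^0(\T)$ may be pushed into $A^0$ by multiplying with a high power $\gamma_k$ without changing either norm, so unboundedness on $C^0(\T)$ transfers to $A^0$ for free. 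This transfer, together with Proposition~\ref{prop:trivial-bounded-T}(b), is the whole content of (c) and avoids constructing any new Riesz products.
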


\begin{proof}

Part (a) is exactly as the corresponding cases of Proposition~\ref{prop:LpLq}; note that we are now assuming $1<p<\infty$.

(b) The key point is a result of Kalton stating that the restriction of such an $\Omega$ to $H_p^0$ is ``almost'' $H_p$-valued in the sense that there is a constant $M$ so that for every $f\in H^0_p$ there is $g\in H_p$ such that $\|\Omega(f)-g\|_p\leq M\|f\|_p$; see \cite[Theorem 7.3]{k-comm}. 
Since the Riesz projection $R(f)=\sum_{n\geq 0}\wh f(n)\gamma_n$ is a bounded $L_1$-homomorphism on $L_p$ when $p\in(1,\infty)$, it follows that $\Omega_+(f)=R\Omega(f)$ is an $L_1$-centralizer from $H^0_p$ to $H_p$. Next, we show that if $\Omega_+$ is trivial, then so is $\Omega$. Clearly, $\Omega_+$ is trivial if and only if  $\Omega: H^0_p\To L_p$ is. 
 Put
$
\tilde{H}_p=\ker R=\{f\in L_p: \wh f(k)=0 \text{ for all }k\geq 0\}
$
and observe that the map $\sigma\in L(L_p)$ defined by $(\sigma f)(z)=\overline{z} f(\overline{z})$ is an isometry of $H_p$ onto $
\tilde{H}_p$ and that the hypotheses on $\Omega$ imply that $\sigma\circ\Omega=\Omega\circ\sigma$ and thus $\Omega|_{H^0_p}$ is trivial if and only if $\Omega|_{\tilde{H}^0_p}$
is trivial. Since $\Omega\approx \Omega\circ R+  \Omega\circ({\bf I}- R)$ the proof is done.

Part (c) is a direct consequence of Theorem~\ref{th:main} and the following.
\end{proof}


\begin{prop}\label{prop:trivial-bounded-T}$\;$
\begin{itemize}
\item[{\rm (a)}]
Let $\Phi:  C^0(\mathbb T)\To L_1$ be a quasilinear map such that $\Phi(\gamma_n f)=\gamma_n\Phi(f)$ for every $n\in\mathbb Z$ and every $f\in  C^0(\mathbb T)$. Then $\Phi$ is bounded if and only if its restriction to $A^0$ is bounded.

\item[{\rm (b)}] Let $\Phi:  A^0\To L_1$ be a quasilinear map such that $\Phi(\gamma_n f)=\gamma_n\Phi(f)$ for every $n\geq 0$ and every $f\in  A^0$. Then $\Phi$ is trivial if and only if it is bounded.
\end{itemize}
\end{prop}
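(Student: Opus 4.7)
The forward implication is trivial since $A^0 \subset C^0(\T)$. For the converse I exploit that each $f \in C^0(\T)$ with Fourier support in $[-N,N]$ is a \emph{twist} of an element of $A^0$: setting $g = \gamma_N f \in A^0$ and noting that $|\gamma_N| \equiv 1$ on $\T$ gives $\|g\|_\infty = \|f\|_\infty$. Applying the commutation hypothesis with the \emph{negative} index $-N$ (this is precisely where one uses "for every $n \in \Z$" rather than just $n \geq 0$) yields
\[
\Phi(f) = \Phi(\gamma_{-N} g) = \gamma_{-N} \Phi(g),
\]
whence $\|\Phi(f)\|_1 = \|\Phi(g)\|_1 \leq \|\Phi|_{A^0}\| \, \|f\|_\infty$, which is what we wanted.

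\textbf{Part (b).} The direction "bounded $\Rightarrow$ trivial" is immediate by choosing the linear map $\phi = 0$, so that $\|\Phi - \phi\|$ is just $\|\Phi\|$. For "trivial $\Rightarrow$ bounded" I plan to mimic the invariant-mean argument in the proof of Proposition~\ref{prop:trivial-bounded}, averaging now over the \emph{semigroup} $\Z_+$ instead of the group $\Z$. Starting from a linear $L: A^0 \to L_1$ with $\|\Phi(f) - L(f)\|_1 \leq K \|f\|_\infty$, I fix an invariant mean $m$ on the commutative (hence amenable) semigroup $\Z_+$ and define $\Lambda : A^0 \to M(\T)$ by
\[
\langle \Lambda(f), h \rangle = m_n\!\left( \int_{\T} \gamma_{-n}\, L(\gamma_n f)\, h\, dx \right), \qquad h \in C(\T).
\]
The hypothesis $\Phi(\gamma_n f) = \gamma_n \Phi(f)$ for $n \geq 0$ supplies the uniform bound $\|L(\gamma_n f)\|_1 \leq \|\Phi(f)\|_1 + K\|f\|_\infty$, so $\Lambda(f)$ is a bona fide finite measure, and the same estimate gives $\|\Lambda(f) - \Phi(f)\|_{M(\T)} \leq K\|f\|_\infty$. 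Forward shift-invariance of $m$, after the change of variable $n \leadsto n+k$ inside $m_n$, translates into $\Lambda(\gamma_k f) = \gamma_k \Lambda(f)$ for every $k \geq 0$. Since $A^0$ is the linear span of $\{\gamma_k : k \geq 0\}$ and $\Lambda$ is linear, this forces $\Lambda(f) = f \mu$ with $\mu := \Lambda(1) \in M(\T)$. Combining $\|\Phi(f) - f\mu\|_{M(\T)} \leq K\|f\|_\infty$ with $\|f\mu\|_{M(\T)} \leq \|f\|_\infty \|\mu\|_{M(\T)}$ finally yields $\|\Phi(f)\|_1 \leq (K + \|\mu\|_{M(\T)})\|f\|_\infty$.

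The main delicacy is that the averaging has to be performed over a one-sided object: the full invariance group $\Z$ exploited in Proposition~\ref{prop:trivial-bounded} is unavailable because multiplication by $\gamma_{-1}$ throws $A^0$ out of itself. One-sided shift-invariance of $m$, granted by the amenability of $\Z_+$, is exactly what compensates for this; correspondingly, the absence of negative shifts is what restricts the conclusion to $A^0$ and prevents a direct analogue statement on $C^0(\T)$. I expect the only nontrivial bookkeeping to be the verification that $\Lambda(f)$ really defines an element of $M(\T)$, which reduces to the uniform $L_1$-bound on $\{L(\gamma_n f)\}_{n\geq 0}$ coming from the commutation of $\Phi$ with the non-negative characters.
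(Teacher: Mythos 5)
Your proof is correct and follows the same route as the paper's: in part (a), multiply by a character to push the Fourier support into $\Z_+$ (your $g=\gamma_N f$ versus the paper's $\gamma_k f$ are the same move), and in part (b), average the trivializing linear map over a Banach limit on the semigroup $\Z_+$ and identify the resulting $L_1$-morphism as multiplication by $\mu=\Lambda(1)$. One small remark: in (a) you stress that the hypothesis ``for all $n\in\Z$'' is needed, but the identity $\Phi(\gamma_{-N}g)=\gamma_{-N}\Phi(g)$ is equivalent to $\Phi(\gamma_N f)=\gamma_N\Phi(f)$ with $N\geq 0$, so the nonnegative shifts already suffice there, as in the paper's display $\|\Phi(f)\|_1=\|\gamma_k\Phi(f)\|_1=\|\Phi(\gamma_k f)\|_1$.
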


\begin{proof}
(a) Actually $\|\Phi: C^0(\mathbb T)\To L_1\|= \|\Phi: A^0\To L_1\|$.
Indeed, for each $f\in C^0(\mathbb T)$ there is $k\in \mathbb N$ such that $\gamma_kf\in A^0$. Now, $\|\Phi(f)\|_1=\|\gamma_k\Phi(f)\|_1=\|\Phi(\gamma_kf)\|_1= $ and $\|f\|_\infty=\|\gamma_kf\|_\infty$.

(b) Assume $\Phi$ trivial so that $\|\Phi(f)-L(f)\|_1\leq K\|f\|_\infty$ for some linear map $L$, a constant $K$, and all $f\in A^0$. We consider $L_1$ inside $M(\T)=C(\T)^*$. Let $m\in \ell_\infty(\Z_+)^*$ be an invariant mean for the {\em semigroup} $\Z_+$ (AKA a Banach limit). Then, since $\|\Phi(f)- \gamma_n^{-1}L(\gamma_nf)\|_1\leq K\|f\|_\infty$
if we define $\Lambda:A^0\To M(\T)$ by letting
$
\langle \Lambda(f), g\rangle= m_n \big(\langle \gamma^{-1}_n L(\gamma_n f), g  \rangle \big)$ for $ g\in C(\T)$. It is easily seen that $\|\Phi-\Lambda\|\leq K$ and that $\gamma_n\Lambda(f)=\Lambda(\gamma_n f)$ for every $n\geq 0$ and all $f\in A^0$, from where it follows that $\Lambda$ has the form $\Lambda(f)=f\mu$, where $\mu=\Lambda(1_\T)$ and that $\Lambda$, and thus $\Phi$, is bounded.
\end{proof}

We remark that the only obstruction to establishing the case $p=\infty$ in the first part of the preceding theorem is that 
we cannot guarantee the surjectivity of the map $f\in A\longmapsto \wh f|_\Sigma\in \ell_2(\Sigma)$ since one has the following ``analytic'' version of \cite[Proof of Theorem~5.1]{ccky}:

\begin{prop}\label{prop:ccky}
Let $0\To H \To Z\To H'\To 0$ be a nontrivial exact sequence, where $H,H'$ are Hilbert spaces. If $J:H\To L_1$ is an embedding and $Q: A\To H'$ is a quotient map, then the extension obtained by taking first pushout with $J$ and then pullback with $Q$ does not split. 
\end{prop}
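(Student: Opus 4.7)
\emph{Sketch.} My plan is to show, by contradiction, that a splitting of the final extension would force the original sequence $0\to H\to Z\to H'\to 0$ to split. Denote by $\Omega:H'\to H$ the quasilinear map representing the latter. In the dictionary of Section~\ref{sec:quasi}, the extension obtained by pushout with $J$ and pullback with $Q$ is represented by $\Psi=J\Omega Q:A\to L_1$, and its splitting amounts to the existence of a bounded linear $m:A\to L_1$ with $\|\Psi(a)-m(a)\|_{L_1}\le K\|a\|_A$ for all $a\in A$.

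The central input is the result (due to Bourgain, building on work of Pelczynski and Kisliakov) that every bounded operator from the disc algebra $A$ into a Banach space of cotype $2$ is $2$-summing. This applies to $m:A\to L_1$, and likewise to $Q:A\to H'$ (regarded as an operator into $L_1$ via any isometric embedding $H'\hookrightarrow L_1$). Taking a common Pietsch measure $\mu$ on the unit ball of $A^*$ I would write simultaneously $m=u\circ J_\mu$ and $Q=q'\circ J_\mu$, where $J_\mu:A\to L_2(\mu)$ is the canonical map and $u,q'$ are bounded linear maps on $Y:=\overline{J_\mu(A)}$, a Hilbert space.

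Decompose $Y=W\oplus N$ orthogonally, with $N=\ker q'$. The restriction $q'|_W:W\to H'$ is a bounded linear bijection, hence an isomorphism; let $\iota:H'\to W$ denote its inverse. The contribution of the $N$-component $y_N$ of $J_\mu(a)$ to $u(J_\mu(a))$ is controlled by $\|u\|\cdot\|J_\mu\|\cdot\|a\|_A$ and can be absorbed into the error, leading to $\|(J\Omega-v\iota)(Q(a))\|_{L_1}\le K'\|a\|_A$ with $v=u|_W$. Using the open mapping theorem to select, for each $h'\in H'$, a preimage $a\in Q^{-1}(h')$ with $\|a\|_A\le C\|h'\|_{H'}$, one obtains $\|J\Omega(h')-\ell(h')\|_{L_1}\le CK'\|h'\|_{H'}$ for all $h'\in H'$, where $\ell=v\iota:H'\to L_1$ is bounded linear; that is, $J\Omega$ is a trivial quasilinear map into $L_1$.

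To close the argument I would invoke Maurey's theorem that every Hilbert subspace of $L_1$ is complemented, producing a bounded projection $P:L_1\to J(H)$. Composing with the bounded inverse $J^{-1}|_{J(H)}$ converts the previous estimate into $\|\Omega(h')-J^{-1}P\ell(h')\|_H\le C''\|h'\|_{H'}$ for all $h'$, so $\Omega$ is trivial, contradicting the non-triviality of the starting sequence. The step I expect to be most delicate is the bookkeeping that turns the Pietsch factorization of $Q$ into an honest isomorphism $q'|_W\cong H'$ while simultaneously preserving the estimate tying $m$ to $\Psi$; once this is in place the remainder is a standard application of the open mapping theorem together with Maurey complementation.
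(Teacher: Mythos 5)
There is a genuine gap at the very start of your argument. You write that the splitting of the final extension ``amounts to the existence of a \emph{bounded} linear $m:A\to L_1$'' approximating $\Psi=J\Omega Q$. This is false. The splitting criterion (Section~\ref{sec:quasi}) gives a linear, \emph{not necessarily continuous}, map $m$ with $\|\Psi(a)-m(a)\|\le K\|a\|$. From this estimate $\|m(a)\|\ge\|\Psi(a)\|-K\|a\|$, so $m$ can be taken bounded if and only if $\Psi$ itself is bounded. But $\Psi$ bounded forces $\Omega$ bounded (use a bounded homogeneous section of $Q$), and a bounded quasilinear map is automatically trivial, contradicting the hypothesis that the original sequence $0\To H\To Z\To H'\To 0$ is nontrivial. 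So under the standing assumptions $m$ is necessarily \emph{unbounded}, Bourgain's theorem cannot be applied to it, and the simultaneous Pietsch factorization of $m$ and $Q$ through $L_2(\mu)$ is not available. This collapses the central step of your sketch.

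The bounded object you do get from the splitting is the lifting $L:A\To \PO$ of $Q$ (a bounded homomorphism, by the pullback splitting criterion in Section~\ref{sec:POPB}), and this is exactly what the paper works with. The trouble is that $\PO$, which contains an isomorphic copy of $Z$, does \emph{not} have cotype~$2$ --- indeed, this failure is a pivot of the whole argument, as the paper first rules out cotype~$2$ for $Z$. So Bourgain's ``cotype~$2$ $\Rightarrow$ $2$-summing'' result is not applicable to $L$; one only has that $\PO$ has finite cotype, hence $L$ is $p$-summing for some $p>2$ and factors through $L_p(\mu)$ rather than $L_2(\mu)$. At that point the Hilbert-space orthogonal decomposition $Y=W\oplus N$ you rely on is no longer available, and the paper instead closes via Maurey's extension theorem applied to the map $\ker(\varpi\beta)\To L_1$ inside $L_p(\mu)$. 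In short: the natural repair of your argument does not stay on your route --- it becomes the paper's proof. (Your final step, upgrading triviality of $J\Omega$ to triviality of $\Omega$ via a bounded projection onto the Hilbertian subspace $J(H)\subset L_1$, would be fine if the earlier steps held, but it is never reached.)
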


\begin{proof}[Sketch of the proof] The relevant diagram is
$$\xymatrix{
0\ar[r] & H \ar[d]_J \ar[r] & Z  \ar[d] \ar[r] & H'\ar[r]\ar@{=}[d] &0\\
0\ar[r] & L_1 \ar[r]^\imath \ar@{=}[d] & \PO \ar[r]^-\pi & H'\ar[r] &0\\
0\ar[r] & L_1 \ar[r] & \PB \ar[u] \ar[r]^-\varpi & A \ar[u]_Q\ar[r] &0
}
$$
The relevant information is that
\begin{itemize}
\item $Z$ does not have cotype $2$ (otherwise, the first row splits by the argument in the last three lines of the proof).
\item $\PO$ has finite cotype (since both $L_1$ and $H'$ do).
\end{itemize}
The first point already implies that the second row does not split: the space $\PO$, which contains an isomorphic copy of $Z$, cannot be isomorphic to $L_1\oplus H'$, which has cotype 2.

Assume that the lower sequence splits. Then $Q$ has a lifting $L:A\To\PO$. Since $\PO$ has finite cotype $L$ is $p$-summing for some  $2<p<\infty$ (\cite[11.14 Theorem]{DJT} plus \cite[Corollary~13]{woj}) and by the Pietsch factorization theorem \cite[p. 45]{DJT}
$L$  factors through some $L_p(\mu)$ in the form
$$
\xymatrix{
L: A\ar[r]^\alpha & L_p(\mu)\ar[r]^\beta & \PO
}
$$
Now, $Q=\pi\beta\alpha$ and the map $\pi\beta$ is onto, so we have a commutative diagram
$$\xymatrix{
0\ar[r] & L_1 \ar[r]^\imath & \PO \ar[r]^-\pi & H'\ar[r] &0\\
0\ar[r] & \ker({\varpi\beta})\ar[r]\ar[u]^u & L_p(\mu) \ar[u]^\beta \ar[r] & H' \ar@{=}[u]\ar[r] &0
}
$$
Finally since $L_1$ has cotype 2 and $L_p(\mu)$ has type 2, Maurey's extension theorem \cite[p.~246]{DJT} implies that $u$ extends to an operator $U:L_p(\mu)\To L_1$, and the upper row of the preceding diagram splits, which is not the case.
\end{proof}

\section{A finer analysis based on Cantor's group}\label{sec:cantor}
In this section we explore the peculiarities of the construction of Section~\ref{sec:global} for Cantor's group. We will show that the $L_1$-centralizers $\mho$ have 
a full range of ``spatial'' symmetries.

Let $\Delta=\{-1,1\}^{\mathbb N}$ be Cantor's group and $D$ its dual group.
The $n$-th Rademacher function $r_n:\Delta\To \mathbb T$ is just evaluation at $n$: $r_n(t)=t(n)$. It is clear that the Rademachers are characters and actually every character is a finite product of Rademachers, i.e., a Walsh function. Let us denote by $\fin$ the set of all finite subsets of $\N$, including the empty set.
Each $a\in\fin$ has a character associated with it, namely the Walsh function
\begin{equation}\label{eq:wa}
w_a=\prod_{k\in a}r_k, \qquad w_a(x)=\prod_{k\in a}x(k)
\end{equation}
with $w_\varnothing=1_\Delta$. One then has $D=\{w_a: a\in\fin\}$, with the discrete topology, and $w_a w_b= w_c$, where $c=a\vartriangle b$ is the symmetric difference between $a$ and $b$. We shall identify $D$ and $\fin$ when it suits us without further mention.

From now on, $\Omega$ denotes a strictly symmetric quasilinear map on $\ell_2^0(D)$ and $\mho^{pq}:L_p^0(\Delta)\To L_q(\Delta)$ is as in Section~\ref{sec:global}, that is, $\mho^{pq}(f)=\mathscr F^{-1}\Omega(\wh f)$ where $1\leq q\leq 2\leq p\leq\infty$, with $\mho^{22}$ shortened to plain $\mho$. Recall that, as pure maps, all $\mho^{pq}$ agree. We know that these are nontrivial $L_1(\Delta)$-centralizers at least when $\Omega$ is the Kalton-Peck map associated to an unbounded essentially concave Lipschitz function.

Note that all characters of $\Delta$ have order 2, so that one is now working on the assumption $(\ddagger)$ in the proof of Theorem~\ref{th:main}. Thus, taking $\Sigma$ as the sequence of Rademachers $(r_n)_{n\geq 1}$ we can consider the
Riesz products
$$
f= \prod_{k=1}^{N}\left( 1+\frac{i\, r_k}{\alpha\sqrt{N}}	\right),
$$
see (\ref{eq:rieszII}). The characters that can be obtained from $\{r_1,\dots, r_N\}$ are exactly the Walsh functions $w_a$ with $a\in\fin\{1,\dots, N\}$ and the length of $w_a$ agrees with $|a|$, the cardinality of $a$.
In this case we can be pretty concrete in the estimation of $\|\mho(f)\|_{L_1}$ because in view of Khintchine inequality the value of $C$ in (\ref{eq:esti}) is $\sqrt{2}=1/A_1$ and so $(\star)>0.3$ for $\alpha= 2$ so that 
\begin{equation}\label{eq:gthanlogn}
\|\mho(f)\|_{L_1}\geq 0.14\,\varphi(\log2\sqrt{N})-0.03\geq 0.07\, \log N
\end{equation}
for all $N$ if $\varphi$ is concave with $L_\varphi\leq 1$, in which case $Q(\mho^{\infty 1})\leq Q(\Omega)\leq 8/e$.
Actually for large values of $N$ we can do it much better because the sum $X_N=\sum_{j\leq N} r_j$ can be seen as a simple one-dimensional random walk, so the expected value  $\mathbb E[|X_N|]=\|X_N\|_{L_1}$ is exactly $2^{-N}\sum_{0\leq k\leq N}|2k-N|\binom{N}{k}$
and behaves asymptotically like $\sqrt{2N/\pi}$ -- cf. \cite[Chapter 6, p. 70]{circus}.

\subsection{Localizing the spatial support by means of the Fourier transform}

In the following result we partially localize the support of a given function on $\Delta$ by means of its Fourier coefficients.
Take $a\in\fin$, then $\e:a\To\{\pm 1\}$ and put $\Delta(a,\e)=\{t\in\Delta: t(k)=\e(k) \text{ for every } k\in a\}$.

\begin{lem}\label{lem:Dae}
For $f\in L_1(\Delta)$ TFAE:
\begin{enumerate}
\item $\supp(f)\subset \Delta(a,\e)$.
\item For every $k\in a$ one has $r_kf=\e(k) f$.
\item For every $k\in a$ and every $d\in D$ one has $\wh f(d\!\vartriangle\!\{k\})=\e(k) \wh f(d)$.
\end{enumerate}
\end{lem}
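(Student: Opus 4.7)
The plan is to prove the equivalences by the cycle (1) $\iff$ (2) pointwise on $\Delta$ and (2) $\iff$ (3) by applying the Fourier transform and invoking its injectivity on $L_1(\Delta)$. The key algebraic observation is that $r_k = w_{\{k\}}$ is a character of order~2 that takes only the values $\pm 1$, so multiplication by $r_k$ on $L_1$ corresponds on the Fourier side to the ``shift'' $d \mapsto d \vartriangle \{k\}$ on $D \cong \fin$.

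For (1)~$\Leftrightarrow$~(2) one argues pointwise. If $\supp(f) \subset \Delta(a,\e)$ and $k \in a$, then at every $t$ in the support we have $r_k(t) = t(k) = \e(k)$, so $r_k f = \e(k) f$ a.e.; and off the support both sides vanish. Conversely, if $r_k f = \e(k) f$, then $(r_k - \e(k))f = 0$ a.e., so $f$ vanishes a.e.\ outside $\{t : t(k) = \e(k)\}$. Intersecting over $k \in a$ yields $\supp(f) \subset \Delta(a,\e)$.

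For (2)~$\Leftrightarrow$~(3) I compute the Fourier coefficients of $r_k f$. Since each character of $\Delta$ equals its own inverse and $w_d \cdot w_{\{k\}} = w_{d \vartriangle \{k\}}$,
\begin{equation*}
\widehat{r_k f}(d) = \int_\Delta \overline{w_d(x)}\, r_k(x)\, f(x)\, dx = \int_\Delta \overline{w_{d \vartriangle \{k\}}(x)}\, f(x)\, dx = \wh f(d \vartriangle \{k\}).
\end{equation*}
Therefore the identity $r_k f = \e(k) f$ is equivalent, via $\mathscr F$, to $\wh f(d \vartriangle \{k\}) = \e(k)\, \wh f(d)$ for every $d \in D$; the injectivity of the Fourier transform on $L_1$ (e.g.\ via its extension to $M(\Delta)$) closes the equivalence.

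There is no real obstacle here: the proof is a direct application of the character-of-order-2 structure of $D$. The only point requiring any care is that when turning (2) into (3) one must make sure that multiplication by $r_k$ is intertwined with a translation on $D$, which is precisely what the computation above displays.
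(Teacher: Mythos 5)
Your proof is correct and takes essentially the same approach the paper has in mind: the paper dismisses the lemma as ``Obvious, after realizing that it suffices to check the equivalences for $|a|=1$,'' and your argument is precisely that reduction carried out in detail, treating each $k\in a$ separately via the pointwise identity $r_k f=\e(k)f$ and the Fourier-side shift $\widehat{r_kf}(d)=\wh f(d\vartriangle\{k\})$.
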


\begin{proof}
Obvious, after realizing that it suffices to check the equivalences for $|a|=1$.
\end{proof}

\begin{cor}
Let $\Omega: \ell_2^0(D)\To \ell_2(D)$ be a strictly symmetric quasilinear map and let $a, \e$ be as before. For every polynomial $f$ such that $\supp(f)\subset  \Delta(a,\e)$ one has
$\supp(\mho f)\subset  \Delta(a,\e)$. 
\end{cor}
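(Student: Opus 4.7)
The plan is to translate the support condition via Lemma~\ref{lem:Dae} into an eigenrelation for the multiplication operators $g\mapsto r_k g$, transfer it to the Fourier side where it becomes a symmetry already enjoyed by $\Omega$, and then pull it back. First I note that $\mho f$ is again a polynomial: since $\Omega$ is strictly symmetric it has property~$(\sharp)$, so by Lemma~\ref{lem:abc}(b) it preserves supports, hence $\supp(\Omega\wh f)\subset\supp\wh f$ is finite and Lemma~\ref{lem:Dae} applies to $\mho f$.

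By the equivalence (1)$\Leftrightarrow$(2) in Lemma~\ref{lem:Dae}, it suffices to prove $r_k\mho f=\e(k)\mho f$ for every $k\in a$. The key observation is that multiplication by $r_k$ on $\Delta$ corresponds, via the Fourier transform, to composition with the bijection $\sigma_k:D\to D$ given by $\sigma_k(d)=d\vartriangle\{k\}$; indeed, using $r_k^2=1_\Delta$ one computes
\[
\wh{r_k g}(d)=\int_\Delta r_k(t)g(t)\overline{w_d(t)}\,dt=\int_\Delta g(t)\overline{w_{d\vartriangle\{k\}}(t)}\,dt=\wh g(\sigma_k(d)),
\]
so that $\wh{r_k g}=\wh g\circ\sigma_k$. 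In particular, the hypothesis $r_k f=\e(k)f$ translates into $\wh f\circ\sigma_k=\e(k)\wh f$.

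Now strict symmetry provides property~$(\flat)$, which is exactly the statement that $\Omega$ commutes with composition by arbitrary bijections of $D$; homogeneity of $\Omega$ supplies $\Omega(\e(k)c)=\e(k)\Omega(c)$. Combining,
\[
(\Omega\wh f)\circ\sigma_k=\Omega(\wh f\circ\sigma_k)=\Omega(\e(k)\wh f)=\e(k)\,\Omega(\wh f).
\]
Applying $\mathscr F^{-1}$ gives $r_k\mho f=\e(k)\mho f$, and Lemma~\ref{lem:Dae} then yields $\supp(\mho f)\subset\Delta(a,\e)$. I do not foresee any real obstacle: the only thing to keep in mind is that the strict symmetry of $\Omega$ is precisely what is needed to commute with every $\sigma_k$ (for finite~$a$ one could even get away with the finitely many bijections involved), and that the ``eigenvalues'' $\e(k)=\pm 1$ pass through $\Omega$ by plain homogeneity.
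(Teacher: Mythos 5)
Your proof is correct and follows the same route as the paper: translate the support condition into the Fourier-side relation of Lemma~\ref{lem:Dae} and exploit the symmetries of $\Omega$ to see that this relation is preserved. Two small remarks: your preliminary observation that $\mho f$ is a polynomial is not actually needed, since Lemma~\ref{lem:Dae} is stated for arbitrary $f\in L_1(\Delta)$ and $\mho f\in L_2(\Delta)\subset L_1(\Delta)$ automatically; and once you have $(\Omega\wh f)\circ\sigma_k=\e(k)\,\Omega\wh f$ you may invoke condition (3) of the lemma directly for $\mho f$, so the detour through condition (2) and the inverse Fourier transform is superfluous --- in effect you only need property $(\flat)$ together with homogeneity.
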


\begin{proof}
This follows from the preceding lemma, taking into account that if $\Omega$ is strictly symmetric and if $g(a)=\e g(b)$ for $a,b\in \fin$ and $\e=\pm1$, then $(\Omega g)(a)=\e (\Omega g)(b)$.
\end{proof}

Each $a\in\fin$ induces a partition of $\Delta$ into $2^{|a|}$ clopen sets of equal measure, namely
\begin{equation}\label{eq:Pa}
\Delta= \bigsqcup_{\e\in\{\pm 1\}^a} \Delta(a,\e)
\end{equation}
If $X$ or $X(\Delta)$ is a space of functions on $\Delta$ and
 $S\subset \Delta$ is a clopen set let us write $X(S)_0=\{f\in X: f=0 \text{ off } S\}=\{f\in X: \supp f\subset S\}$. It is clear that each finite partition $\mathscr P$ of $\Delta$ into clopen sets induces decompositions
$$
L_p=\bigoplus_{S\in\mathscr P} L_p(S)_0\qquad\text{and} \qquad L_p^0=\bigoplus_{S\in\mathscr P} L_p^0(S)_0
$$
in which both direct sums carry the corresponding $\ell_p$-norm.

Now assume $\Phi:E\To F$ is a quasilinear map and that we have a decomposition $E=\bigoplus_{i\leq k} E_i$ so that each $x\in E$ can be written in a unique way as $x=\sum_{i\leq k} x_i$, with $x_i\in E_i$ and the ``projections'' $x\longmapsto x_i$ are bounded. 
Then, if we denote by $\Phi_i:E_i\To F$ the restriction of $\Phi$ to $E_i$ we can define a {\em new} quasilinear map 
$
\bigoplus_{i\leq k} \Phi_i : E=\bigoplus_{i\leq k} E_i\To F 
$
exactly as the reader is figuring out: $\bigoplus_{i\leq k} \Phi_i\big(\sum_{i\leq k}x_i\big)=\sum_{i\leq k}\Phi_i(x_i)$, where $x_i\in E_i$ for $i\leq k$. Using quasilinearity inductively it is very easy to see that $\|\Phi-  \bigoplus_{i\leq k} \Phi_i\|\leq M$, where the constant $M$ depends on $Q(\Phi)$ and the decomposition:
one can take $M\leq Q(\Phi)\lceil \log_2 k\rceil R$, where $\lceil\cdot\rceil$ is the {\em ceiling} function and $R=\sup_{\|x\|\leq 1}\sum_{i\leq k}\|x_i\|$. Furthermore, if $E$ is a $\mathscr K\!$-space one can replace $\lceil \log_2 k\rceil$ by the $\mathscr K\!$-space constant of $E$.

Such decompositions are quite useful in the study of our maps $\mho$ because these have many symmetries that are compatible with the decompositions induced by the partitions of the form (\ref{eq:Pa}), as we now see.

Given $a\in D$, consider two choices of signs $\e,\eta\in\{\pm 1\}^a$. 
If $y\in\Delta$ is defined by
\begin{equation}\label{eq:def-y}
y(n)=\begin{cases} 1 &\text{if $n\notin a$};\\
\e(n)\eta(n) &\text{if $n\in a$}	\end{cases}
\end{equation}
it is obvious that the (involutive) map $x\longmapsto xy^{-1}$ maps $\Delta(a,\e)$
to $\Delta(a,\eta)$ and vice-versa. Therefore,  $\supp f\subset \Delta(a,\e) \iff \supp f_y\subset \Delta(a,\eta)$ and $f\longmapsto f_y$ sends each function supported on $\Delta(a,\e)$ to one 
supported on $\Delta(a,\eta)$. This implies that if $\Omega$ is strictly symmetric then the ``chunks'' of $\mho$ associated to a partition of the form (\ref{eq:Pa}) are all basically ``equal''. Precisely:

\begin{lem} Assume $\Omega:\ell_2^0(D)\To \ell_2(D)$ is a strictly symmetric quasilinear map. Let $a\in D$ and $\e,\eta\in\{\pm1\}^a$ be as before and let $\mho_{a,\e}$ and $\mho_{a,\eta}$  be the restriction of $\mho$ to the polynomials supported on $\Delta(a,\e)$
and $\Delta(a,\eta)$, respectively.
 Then, if $y$ is given by $(\ref{eq:def-y})$ one has 
$(\mho_{a,\e}(f_y))_y= \mho_{a,\eta}(f)$ for every polynomial supported on $\Delta(a,\eta)$.
\end{lem}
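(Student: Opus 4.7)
The plan is to reduce the identity to the fact that $\mho$ commutes with translations by every $y\in\Delta$, and then to observe that $y^{-1}=y$ in Cantor's group.

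First I would verify that $\mho$ commutes with translations by arbitrary $y\in\Delta$. The function $y:D\to\mathbb C$ defined by $y(w_a)=w_a(y)=\prod_{k\in a}y(k)$ takes only the values $\pm1$, so it is a unitary element of $\ell_\infty(D)$. Property $(\sharp)$ of a strictly symmetric quasilinear map then gives $\Omega(y\,c)=y\,\Omega(c)$ for every finitely supported $c\in\ell_2^0(D)$; that is, $\Omega$ commutes with characters of $\Delta$. By Lemma~\ref{lem:*vs.}(b) the composition $\mho=\F^{-1}\circ\Omega\circ\F$ commutes with translations: $\mho(g_y)=(\mho g)_y$ for every polynomial $g$ and every $y\in\Delta$.

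Next I would verify how translation by the particular $y$ defined in (\ref{eq:def-y}) interacts with the sets $\Delta(a,\e)$ and $\Delta(a,\eta)$. Since $y(n)^2=1$ for all $n$, we have $y^2=1_\Delta$ and $y^{-1}=y$. A direct check shows that $x\in \Delta(a,\eta)$ implies $xy^{-1}\in\Delta(a,\e)$ (for $n\in a$, $xy^{-1}(n)=\eta(n)\cdot\e(n)\eta(n)=\e(n)$), and the converse by symmetry; hence $f\mapsto f_y$ is an involution that swaps the subspaces of polynomials supported on $\Delta(a,\eta)$ and on $\Delta(a,\e)$.

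Now fix a polynomial $f$ supported on $\Delta(a,\eta)$. Then $f_y$ is a polynomial supported on $\Delta(a,\e)$, so by definition of the restrictions, $\mho(f_y)=\mho_{a,\e}(f_y)$ and $\mho(f)=\mho_{a,\eta}(f)$. Commutation with translations gives
\[
\mho_{a,\e}(f_y)=\mho(f_y)=(\mho f)_y=(\mho_{a,\eta}(f))_y.
\]
Translating once more by $y$ and using $(g_y)_y=g$ (because $y^{-1}=y$) yields $(\mho_{a,\e}(f_y))_y=\mho_{a,\eta}(f)$, as required.

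The argument has no real obstacle; the only mildly delicate point is recognizing that strict symmetry of $\Omega$ applies here because every $y\in\Delta$ gives a unitary function on $D$ (a fact that would fail for a generic compact abelian group, which is precisely why the nice symmetries materialize for Cantor's group). Everything else is bookkeeping with the involution $y=y^{-1}$.
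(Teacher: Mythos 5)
Your proof is correct and follows exactly the route the paper intends: strict symmetry of $\Omega$ means it commutes with the unitaries $y\in\Delta\subset\ell_\infty(D)$, so by Lemma~\ref{lem:*vs.}(b) $\mho$ commutes with translations, and the involution $y=y^{-1}$ swaps the two support sets, after which the identity is bookkeeping. This is precisely the argument the paper leaves to the reader (it states the lemma without proof as a consequence of the surrounding discussion), so there is nothing to add.
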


Actually all the maps $ \mho_{a,\e}$ are ``copies'' of the mother map $\mho$ in the sense indicated in Proposition~\ref{prop:copies} below.
Indeed, given $a,\e$,  let $s:\N\To \N\backslash a $ be a bijection and set $(\sigma x)(n)=x(s(n))$. Given a function $f$ defined on $\Delta$ we define another function $E(f)$ {\em supported on}  $\Delta(a,\e)$
 letting
$$
(Ef)(x)=\begin{cases}
f(\sigma(x)) & \text{if $x\in \Delta(a,\e)$};\\
0 & \text{otherwise}.
\end{cases}
$$
Observe that $E$ depends not only on $a$ and $\e$ but also on $s$. It is clear that $E$ is multiplicative and that it defines isomorphisms between $L_p(\Delta)$ and $L_p(\Delta(a,\e))_0$ which preserve polynomials for all $p$. Actually $\|Ef\|_p=2^{-|a|/p}\|f\|_p$ for finite $p$ and $E:C(\Delta)\To C(\Delta(a,\e))_0$ is a surjective isometry. Let us denote by $E_p$ the map $E$ acting on $L_p(\Delta)$.

Here is what we wanted to establish:

\begin{prop}\label{prop:copies} Let $1\leq q\leq 2\leq p\leq\infty$.
If $\Omega: \ell_2^0(D)\To \ell_2$ is strictly symmetric then, for every $a$ and $\e$, one has $E_q\circ\mho^{pq}\approx \mho^{pq}_{a,\e}\circ E_p$ and, therefore, $  \mho_{a,\e}^{pq}\approx E_q\circ\mho^{pq} \circ E^{-1}_p  $.
\end{prop}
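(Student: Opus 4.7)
The plan is to translate the claim to the Fourier domain and exploit the two symmetries built into $\Omega$. Setting $\mathcal E:=\mathscr F\circ E\circ \mathscr F^{-1}$ and using $E(w_d)=\chi_{\Delta(a,\e)}\,w_{s(d)}$ together with the spectral expansion $\chi_{\Delta(a,\e)}=2^{-|a|}\sum_{b\subset a}\e(b)\,w_b$, a direct expansion yields the explicit formula
\begin{equation*}
\mathcal E(\phi)(c)\;=\;\frac{1}{2^{|a|}}\,\e(c\cap a)\,\phi\bigl(s^{-1}(c\cap(\N\setminus a))\bigr).
\end{equation*}
After Fourier inversion the claim $\mho^{pq}_{a,\e}\circ E_p\approx E_q\circ\mho^{pq}$ becomes equivalent to showing $\Omega\circ\mathcal E\approx\mathcal E\circ\Omega$ on $\ell_2^0(D)$, with the error controlled by the $\ell_2$-norm of the input.

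The second step is to factor $\mathcal E$ into pieces that $\Omega$ can absorb. Let $u\in\ell_\infty(D)$ be the unitary $u(c):=\e(c\cap a)\in\{\pm1\}$, and for each $b\subset a$ let $\tau_b:D\to D$ be the injection $\tau_b(e):=b\cup s(e)$, whose range is exactly $\{c\in D:c\cap a=b\}$. Splitting the values of $c$ according to $c\cap a$ one checks that
\begin{equation*}
\mathcal E \;=\; \frac{1}{2^{|a|}}\,M_u\,\circ\,\sum_{b\subset a}R^{\tau_b},
\end{equation*}
where $M_u$ denotes pointwise multiplication by $u$. The two halves of strict symmetry now come into play: property $(\sharp)$ gives $\Omega\circ M_u=M_u\circ\Omega$, while property $(\flat)$ combined with Lemma~\ref{lem:abc}(b)-(c) gives $\Omega\circ R^{\tau_b}=R^{\tau_b}\circ\Omega$ for every $b\subset a$.

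To dispose of the inner sum I will use that $\ell_2$ is a Hilbert space, hence a $\mathscr K\!$-space, so iterated quasilinearity produces an absolute constant $K$ with
\begin{equation*}
\Bigl\|\Omega\!\Bigl(\sum_{b\subset a}R^{\tau_b}\phi\Bigr)-\sum_{b\subset a}R^{\tau_b}\Omega(\phi)\Bigr\|_{\ell_2}\;\le\; K\,Q(\Omega)\cdot 2^{|a|}\,\|\phi\|_{\ell_2}
\end{equation*}
for every $\phi\in\ell_2^0(D)$. Pulling $M_u$ and the scalar $2^{-|a|}$ out of $\Omega$ by homogeneity and $(\sharp)$ then produces the decisive cancellation $2^{-|a|}\cdot 2^{|a|}=1$, leaving $\|\Omega(\mathcal E\phi)-\mathcal E(\Omega\phi)\|_{\ell_2}\le K\,Q(\Omega)\|\phi\|_{\ell_2}$.

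Finally I transport the estimate back to $\Delta$: by Plancherel and the chain of inclusions $L_p\subset L_2\subset L_q$ valid for $1\le q\le 2\le p\le\infty$ on the probability space $\Delta$,
\begin{equation*}
\|\mho^{pq}_{a,\e}(E_p f)-E_q(\mho^{pq} f)\|_{L_q}\;\le\;\|\Omega\mathcal E\hat f-\mathcal E\Omega\hat f\|_{\ell_2}\;\le\;K\,Q(\Omega)\,\|f\|_{L_p}.
\end{equation*}
A bounded quasilinear map is trivial, so the two compositions are equivalent, which gives the first equivalence; pre-composing with $E_p^{-1}$ on the range of $E_p$ gives the second. The main technical obstacle is the careful bookkeeping that ensures the $2^{|a|}$ factor introduced by the sum over $b\subset a$ cancels exactly against the normalising $2^{-|a|}$ coming from the spectral expansion of $\chi_{\Delta(a,\e)}$.
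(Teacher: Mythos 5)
Your proof is correct and takes essentially the same route as the paper's. The paper decomposes $E=\sum_{b\subset a}E^{b}$ on the function side with the signs and the $2^{-|a|}$ factor absorbed into the scalars $c_b$ and invokes homogeneity of $\Omega$; you work on the Fourier side, factoring $\mathcal{E}=2^{-|a|}M_u\circ\sum_{b\subset a}R^{\tau_b}$ and invoking property $(\sharp)$ for the unitary $M_u$. After unwinding, these are the same $2^{|a|}$-term decomposition, the same appeal to Lemma~\ref{lem:abc}(c), and the same $\mathscr K\!$-space estimate in which the factor $2^{|a|}$ from the sum cancels the normalising $2^{-|a|}$. The only cosmetic difference is that the paper first reduces to $p=q=2$ via the inclusions $L_r\subset L_s$, whereas you carry the inclusions through directly at the end; both are fine.
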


\begin{proof}[Sketch of the Proof]
Since $E$ commutes with the inclusions $L_r(\Delta)\To L_s(\Delta)$ for $s\leq r$ it suffices to do the proof for $p=q=2$. We need to describe the action of $E$ on Walsh functions. Since these are products of Rademachers and $E$ is multiplicative we just have to keep track of the Rademachers.
Given $x\in \Delta(a,\e)$ and $n\in\N$ one has
$$
(Er_n)(x) = r_n(\sigma(x))= (\sigma x)(n)=x(s(n)) \implies E(r_n)= 1_{\Delta(a,\e)}r_{s(n)}  \implies E(w_d)= 1_{\Delta(a,\e)} w_{s(d)},
$$
if we agree that $s(d)=\{s(n_1),\dots, s(n_k)\}$  for $d=\{n_1,\dots, n_k\}$.
The Fourier transform of $1_{\Delta(a,\e)}$ is supported on $\{b\in D; b\subset a\}$ and one actually has
$$
\wh 1_{\Delta(a,\e)}(b)=\begin{cases}
2^{-|a|}\prod_{n\in b}\e(n) & \text{if $b\subset a$},\\
0 & \text{otherwise},
\end{cases}
$$
although this sophistication is not necessary, so write just $1_{\Delta(a,\e)}=\sum_{b\subset a}c_b w_b$ and then
$$
E(w_d)= \left(\sum_{b\subset a}c_b w_b\right)w_{s(d)}= \sum_{b\subset a}c_b w_{b\vartriangle s(d)}= \sum_{b\subset a}E^b(w_d),
$$
where $E^b$ is the operator sending $w_d$ to $c_bw_{b\vartriangle s(d)}$.

Let us complete the proof assuming that $\mho$ commutes with every $E^b$; we will prove this right away -- see the next lemma.
Writing $E\mho(f)=\sum_{b\subset a} E^b\mho(f)=  \sum_{b\subset a} \mho(E^b f)$ we obtain, using that Hilbert spaces are $\mathscr K\!$-spaces,
\begin{align*}
\|\mho(Ef)-E\mho(f)\|_2&=
\left\|\mho\left(\sum\nolimits_{b} E^b f\right)- \sum\nolimits_{b} \mho(E^b f)\right\|_2\leq 37 Q(\mho) \sum\nolimits_{b}\| E^b f\|_2\\
&=37 Q(\Omega) \sum\nolimits_{b}2^{-|a|}\| f\|_2 = 37 Q(\Omega) \| f\|_2
\end{align*}
since the power set of $a$ has cardinality $2^{|a|}$.
\end{proof}

\begin{lem}
With the same notations as before, $\mho$ commutes with every $E^b$.
\end{lem}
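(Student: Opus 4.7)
The plan is to transfer the statement to the Fourier side, where $\mho = \mathscr F^{-1}\circ\Omega\circ\mathscr F$ reduces to $\Omega$ itself, and then invoke the rearrangement-invariance of $\Omega$ coming from strict symmetry. Concretely, $\mho\circ E^b = E^b\circ\mho$ is equivalent to $\Omega\circ \tilde E^b = \tilde E^b\circ\Omega$, where $\tilde E^b = \mathscr F\circ E^b\circ \mathscr F^{-1}$ acts on $\ell_2^0(D)$.

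The next step is to read off $\tilde E^b$ on the Walsh basis. By the definition of $E^b$ one has $E^b(w_d)=c_b w_{b\vartriangle s(d)}$, so on Fourier coefficients
\[
\tilde E^b(e_d)= c_b\,e_{\tau(d)},\qquad \tau(d)=b\vartriangle s(d),
\]
where $e_d$ denotes the unit vector of $\ell_2(D)$ at $d\in D$. Since $s:\mathbb N\to \mathbb N\setminus a$ is a bijection, the induced map on finite subsets is an injection, and composing with $b\vartriangle(\cdot)$ (which is a bijection of $D$) yields that $\tau:D\to D$ is an injection. Hence $\tilde E^b = c_b\, R^\tau$ in the notation of Section~\ref{sec:KPmaps}.

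The conclusion is now immediate. Multiplication by the scalar $c_b$ commutes with $\Omega$ by homogeneity, while $\Omega$ commutes with $R^\tau$ by Lemma~\ref{lem:abc}(c), since strict symmetry of $\Omega$ includes property $(\flat)$. Therefore
\[
\Omega\circ\tilde E^b = c_b\,\Omega\circ R^\tau = c_b\,R^\tau\circ\Omega = \tilde E^b\circ\Omega,
\]
and transferring back through $\mathscr F$ gives $\mho\circ E^b = E^b\circ\mho$ on polynomials, as required. There is no real obstacle here beyond the bookkeeping of writing $E^b$ on the Walsh basis and checking that $\tau$ is an injection; once those are in place, the symmetry hypotheses on $\Omega$ do all the work.
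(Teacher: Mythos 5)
Your proof is correct and follows essentially the same route as the paper's: transfer to the Fourier side, recognize $\tilde E^b$ as a scalar multiple of a rearrangement operator $R^\tau$ coming from an injection of $D$, and invoke Lemma~\ref{lem:abc}(c) together with homogeneity of $\Omega$. You are in fact a bit more careful than the published proof, which writes $\mathscr F E^b = R^\beta \mathscr F$ and silently drops the scalar $c_b$ --- harmless, of course, precisely because $\Omega$ is homogeneous, as you point out.
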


\begin{proof}
It is clear that for each fixed $b\in\fin$ the map $\beta:\fin\To \fin$ defined by $\beta(d)=b\vartriangle s(d)$ is injective -- it is the composition of two injective maps. If $R^\beta$ is the corresponding rearrangement operator on $\ell_2(D)$ it is obvious that
$\mathscr F E^b= R^\beta \mathscr F$ and so $ E^b \mathscr F^{-1}= \mathscr F^{-1} R^\beta$. Hence, by Lemma~\ref{lem:abc},
$$
\mho(E^b f)= \mathscr F^{-1}\Omega(\wh{E^b f})=
\mathscr F^{-1}\Omega( R^\beta \wh f)= \mathscr F^{-1}  R^\beta \Omega( \wh f)= E^b \mathscr F^{-1} \Omega( \wh f) =E^b\mho (f).\qedhere
$$
\end{proof}

\subsection{An extension of $c_0$ by $\ell_1$ (explicit content)}

We now use our knowledge of the maps $\mho$ to construct a nontrivial quasilinear map $c_0\To \ell_1$. The existence of such objects is proved in \cite[Section 4.3]{2c-houston} (or \cite[Proposition~5.2.21]{2c}) by means of local arguments that, in the end, do not provide any explicit example and revisited in \cite[\S~3]{ccs} where it is shown that each nontrivial extension $0\To L_1(\mu)\To Z\To C(K)\To 0$ leads (by making pullback and pushout) to one of the form 
$0\To \ell_1\To Z'\To c_0\To 0$ which remains nontrivial. We first obtain the basic blocks of the construction. Given $n\in \mathbb N$ consider the group $\Delta_n=\{\pm1\}^n$ with Haar measure normalized to $1$. The dual group $D_n$ can be identified with the power set of $\{1,\dots,n\}$ with Haar measure normalized so that each point of $D_n$ has mass $1$.

Now let $\varphi:[0,\infty)\To [0,\infty)$ be an unbounded concave Lipschitz map vanishing at $0$, $\Omega_n: \ell_2(D_n) \To \ell_2(D_n)$ the corresponding Kalton-Peck map and $\mho_n^{\infty 1}: L_\infty(\Delta_n) \To L_1(\Delta_n)$ the map given by $\mho_n^{\infty 1}(f)= \mathscr F^{-1}\Omega_n(\wh f)$.
It is clear that the quasilinearity constants $Q(\mho_n^{\infty 1})$ are uniformly bounded (with a bound depending only on $L_\varphi$) and that there is a constant $c>0$ so that $\delta(\mho_n^{\infty 1})\geq c\varphi(\log n)$, in particular $\lim_n \delta(\mho_n^{\infty 1})=\infty$. Indeed, define a Riesz product with the $n$ Rademachers in $D_n$, use the estimate (\ref{eq:gthanlogn}) and then Corollary~\ref{cor:distvsbound}. Let $(c_k)$ be a summable sequence and, for each $k$, take $n(k)$ so large that $c_k\varphi(\log n(k))\To\infty$. The following result has an obvious proof.

\begin{lem}
The map $\Phi: c_0^0(\N, L_{\infty}(\Delta_{n(k)}))\To \ell_1(\N, L_1(\Delta_{n(k)}))$ defined by $\Phi((x_k)_{k\geq 1})=(c_k \mho_{n(k)}^{\infty 1}(x_k))_{k\geq 1}$ is quasilinear and nontrivial.
\end{lem}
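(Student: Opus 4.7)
The plan splits cleanly into quasilinearity and nontriviality, with no real obstacle in either direction; the construction of the sequence $n(k)$ is exactly what makes the argument work.

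For \textbf{quasilinearity}, I would simply add up the coordinates. Since the quasilinearity constants $Q(\mho_{n(k)}^{\infty 1})$ are uniformly bounded by some $Q$ (this was observed immediately before the lemma, and follows because they depend only on $L_\varphi$), one gets for $\xi=(x_k), \eta=(y_k)\in c_0^0(\N, L_\infty(\Delta_{n(k)}))$
$$
\|\Phi(\xi+\eta)-\Phi(\xi)-\Phi(\eta)\|_{\ell_1}
=\sum_k c_k\,\|\mho_{n(k)}^{\infty 1}(x_k+y_k)-\mho_{n(k)}^{\infty 1}(x_k)-\mho_{n(k)}^{\infty 1}(y_k)\|_{L_1}
\le Q\sum_k c_k(\|x_k\|_\infty+\|y_k\|_\infty),
$$
which is dominated by $Q\|(c_k)\|_1(\|\xi\|_{c_0}+\|\eta\|_{c_0})$. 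Thus $Q(\Phi)\le Q\|(c_k)\|_1<\infty$.

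For \textbf{nontriviality}, the plan is to argue by contradiction using the quantity $\delta(\cdot)$ introduced before Corollary~\ref{cor:distvsbound}. Suppose there exist a linear map $L\colon c_0^0(\N, L_\infty(\Delta_{n(k)}))\to \ell_1(\N, L_1(\Delta_{n(k)}))$ and a constant $K$ with $\|\Phi(\xi)-L(\xi)\|_{\ell_1}\le K\|\xi\|_{c_0}$. For each $k$ denote by $J_k\colon L_\infty(\Delta_{n(k)})\hookrightarrow c_0^0(\N, L_\infty(\Delta_{n(k)}))$ the $k$-th coordinate inclusion and by $P_k\colon\ell_1(\N, L_1(\Delta_{n(k)}))\to L_1(\Delta_{n(k)})$ the $k$-th projection; both have norm one. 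The composition $L_k:=P_k\circ L\circ J_k$ is linear and, for every $x\in L_\infty(\Delta_{n(k)})$,
$$
\|c_k\mho_{n(k)}^{\infty 1}(x)-L_k(x)\|_{L_1}
\le \|\Phi(J_k x)-L(J_k x)\|_{\ell_1}\le K\|x\|_\infty.
$$
Hence $\delta(\mho_{n(k)}^{\infty 1})\le K/c_k$ for every $k$.

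The contradiction is then immediate: by the lower bound $\delta(\mho_{n(k)}^{\infty 1})\ge c\,\varphi(\log n(k))$ recorded in the discussion preceding the lemma, one gets $c\,c_k\,\varphi(\log n(k))\le K$ uniformly in $k$, which clashes with the choice of the integers $n(k)$ so that $c_k\,\varphi(\log n(k))\to\infty$. The only mildly delicate point in the whole argument is recognizing that the ambient linear section $L$ automatically localizes to a linear section of each block via $P_k$ and $J_k$; once that is granted, everything reduces to the competing asymptotics $c_k\,\varphi(\log n(k))\to\infty$ versus the uniform bound $K$.
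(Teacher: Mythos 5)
Your proof is correct and is essentially the argument the paper has in mind when it states that the lemma ``has an obvious proof'': both halves follow directly from the two facts recorded just before the lemma (uniform bound on $Q(\mho_{n}^{\infty 1})$ depending only on $L_\varphi$, and the lower estimate $\delta(\mho_{n}^{\infty 1})\ge c\,\varphi(\log n)$). Your localization step, using $J_k$ and $P_k$ to conclude $\delta(\mho_{n(k)}^{\infty 1})\le K/c_k$ from a hypothetical global linear approximant, is exactly the intended reduction, and it clashes with the choice of $n(k)$ making $c_k\varphi(\log n(k))\to\infty$.
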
  

But $L_{\infty}(\Delta_{n(k)})= \ell_{\infty}(\Delta_{n(k)})$ with the same norm, while 
$L_{1}(\Delta_{n(k)})$ is isometric to $\ell_{1}(\Delta_{n(k)})$ so that $\Phi$ can be seen as a quasilinear map  $c_0^0\To \ell_1$ which, to be true, seems to be highly nonsymmetric and a bit nasty. Given the approach of this paper the following question is unavoidable, and rather intching:

\begin{prob}
{\rm Find a nontrivial quasilinear map $c_0^0(\Z)\To \ell_1(\Z)$ that commutes with translations or prove that there is none.}
\end{prob}

In a different direction we may ask if it is possible to construct a quasilinear map $\Phi:C(K)\To L_1(\mu)$ which is nontrivial when restricted to every subspace spanned by a disjoint sequence. Here $f$ and $g$ are {\em disjoint} if $fg=0$, equivalently, if $\supp f\cap \supp g=\varnothing$. This would lead to a strictly singular quasilinear map  $c_0\To \ell_1$ whose existence, asked repeatedly by J.M.F. Castillo, we do not known either. See \cite{marilda} for the available information on twisted sums of $\ell_1$ and $c_0$ and observe that our results do not solve any of the problems stated or suggested at the end of that paper.

\section{Miscellaneous remarks}

\subsection{Is there a nonlinear Grothendieck theorem?}~\label{sec:CT}
The version of Grothendieck theorem that best suits our purposes is \cite[Theorem 2.1]{pisier}: Let $S,T$ be compact spaces and $	\mho: C(S)\To M(T)$ an operator. Then there are probabilities $\lambda, \mu$ respectively on $S$ and $T$ and an operator $\Omega: L_2(\lambda)\To  L_2(\mu)$ such that $\mho=J\Omega I$, where $I:C(S)\To L^2(\lambda) $ and $J :L^2(\mu) \To L^1(\mu)\To M(T) $ are the formal inclusions. We have slightly edited Pisier statement  to emphasize that all known quasilinear maps $	\mho: C(S)\To M(T)$ have a similar factorization where $\Omega: L_2(\lambda)\To  L_2(\mu)$ is now quasilinear. We do not know if this is due to a general principle or rather to our lack of ability to find other examples.

\subsection{The failure of Kalton-Roberts theorem for bimeasures}
Kalton-Roberts' proof that Banach spaces of type $\mathscr L_\infty$ are $\mathscr K\!$-spaces basically consists in showing that there is a constant $K$ so that whenever $m: 2^S \To \R$ satisfies $|m(A\cup B)-m(A)-m(B)|\leq 1$ for all disjoint subsets $A,B$ of a finite set $S$  there is a true measure $\mu: 2^S \To \R$ such that $|m(A)-\mu(A)|\leq K$ for all $A\subset S$.
One may wonder if bimeasures have a similar stability property, namely if given a function $b: 2^S\times 2^T \To \mathbb R$ that satisfies the estimate $|b(A\cup B, C)-b(A, C)-b(B, C)|\leq 1$ for all disjoint $A,B\subset S$ and all $C\subset T$ and the same with the variables reversed, there is {\em bimeasure}  $\beta: 2^S\times 2^T \To \mathbb R$ such that $|b(A,B)-\beta(A,B)|\leq K$ for some {\em universal} constant $K$.
The existence of quasilinear maps $\Phi_n:\ell_\infty^n\To \ell_1^n$ with $Q(\Phi_n)$ uniformly bounded and $\delta(\Phi_n)\To\infty$ implies that there is no Kalton-Roberts theorem for bimeasures.

Our results can only give $\delta(\Phi_n)/Q(\Phi_n)\geq c\log\log(n)$ for some $c>0$. Brudnyi and Kalton have already obtained $\delta(\Phi_n)/Q(\Phi_n)\geq c\log(n)$ in \cite{b-k}.

\subsection{A more professional approach}
Knowledgeable people on centralizers use a more flexible definition that requires to have a good ambient space. Precisely, let $X, Y$ be Banach $A$-modules and let $W$ be another $A$-module containing $Y$ in the purely algebraic sense. A homogeneous mapping $\Phi: X\To W$ (not $Y$) is said to be a centralizer from $X$ to $Y$ if for every $x,y\in X$ and every $a\in A$ the differences $\Phi(x+y)- \Phi(x)-\Phi(y)$ and $\Phi(ax)-a\Phi(x)$ fall in $Y$ and obey the estimates $\|  \Phi(x+y)- \Phi(x)-\Phi(y)\|\leq Q(\|x\|+\|y\|)$ and $\|\Phi(ax)- a\Phi(x)\|\leq C\|a\|\|x\|$ for some constants $Q$ and $C$.

The set $Y\oplus_\Phi X=\{(w,x)\in W\times X: w-\Phi(x)\in Y\}$ is a submodule of $W\times X$ (with the coordinatewise product) which becomes a quasinormed module with the quasinorm $\|(w,x)\|_\Phi=\|w-\Phi(x)\|+\|x\|$. The sequence 
\begin{equation}\label{eq:mumu}
\xymatrix{
0\ar[r] & Y \ar[r]^-\imath & Y\oplus_\Phi X \ar[r]^-\pi & X\ar[r] &0
}
\end{equation}
in which $\imath(y)=(y,0), \pi(w,x)=x$ is exact. Besides, $\imath$ preserves the quasinorm and $\pi$ maps the unit ball of $Y\oplus_\Phi X$ onto that of $X$. This implies that $Y\oplus_\Phi X$ is complete (hence a quasi Banach $A$-module) if $X$ and $Y$ are and thus no further action is required. The splitting criterion for these sequences is simple: (\ref{eq:mumu}) splits, as an extension of quasi Banach modules, if and only if there is a morphism $\phi:X\To W$ such that $\Phi-\phi$ is bounded from $X$ to $Y$. 

There is a classical object that could have been used as an ambient space for $L_1$-centralizers: the space of {\em pseudomeasures} on $G$.

Consider the Fourier algebra $A(G)=\{f\in C(G): \sum_{\gamma\in\Gamma}|\wh f(\gamma)|<\infty\}$ and norm it by $\|f\|_{A(G)}= \sum_{\gamma\in\Gamma}|\wh f(\gamma)|$ so that $A(G)$ is isometric to $\ell_1(\Gamma)$.  Clearly, $A(G)$ is an $M(G)$-module under convolution (the Fourier coefficients of any finite measure are bounded). Let $A^*(G)$ be the dual space of $A(G)$, which is an $M(G)$-module under the ``dual action'':
$$
\langle \mu\circledast\phi, f\rangle= \langle \phi, f*\mu \rangle\qquad(\mu\in M(G),\phi\in A^*(G), f\in A(G)).
$$
Note that, as $A(G)$ is (uniformly) dense in $C(G)$, every measure can be  treated as an element of $A^*(G)$ and the ``inclusion'' is continuous. Besides, $\mu\circledast\phi= \mu*\phi$ if $\phi\in M(G)$.  Of course there is nothing really {\em new} in the space $A^*(G)$ which is just $\ell_\infty(\Gamma)$ in disguise.
What is interesting for us is that we can interpret $\mathscr F:A^*(G)\To \ell_\infty(\Gamma)$ as an isomorphism in the obvious way and then consider $A^*(G)$ as a ``convolution'' algebra defining $\phi\circledast \psi= \mathscr{F}^{-1}\big(\mathscr F(\phi) \mathscr F(\psi))$. 

We have the following companion of Lemma~\ref{lem:all}:

\begin{prop} Let $X, Y$ be Banach $L_1$-modules.
If $Y$ admits a continuous inclusion into  $A^*(G)$ then every extension  of $X$ by $Y$ (in the category of Banach $L_1$-modules) is equivalent to the extension induced by an $L_1$-centralizer from $X$ to $Y$, with values in  $A^*(G)$.
\end{prop}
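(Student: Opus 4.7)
My plan is to reduce the problem to a splitting question to which Johnson's theorem on the amenability of $L_1(G)$ applies. Given an extension $0\To Y\To Z\To X\To 0$ of Banach $L_1$-modules, with projection $\varpi:Z\To X$, I would first form the pushout along the continuous inclusion $j:Y\To A^*(G)$ to obtain
$$
0\To A^*(G)\To Z'\To X\To 0.
$$
Because $A^*(G)$ is isometric to $\ell_\infty(\Gamma)$, and hence $1$-injective as a Banach space, this new sequence splits in the category of Banach spaces. Since $A^*(G)$ is a dual $L_1$-module (the dual of $A(G)\cong\ell_1(\Gamma)$ under the action $\langle\mu\circledast\phi,f\rangle=\langle\phi,f*\mu\rangle$) and $L_1(G)$ is amenable, Johnson's theorem \cite[Thm.~2.3.21]{runde} upgrades the linear splitting to a module splitting: there is an $L_1$-homomorphism $\sigma:Z'\To A^*(G)$ retracting onto the copy $I(A^*(G))\subset Z'$. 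This is the only nonformal step in the plan.

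Next, I would compose $\sigma$ with the pushout map $V:Z\To Z'$ to produce an $L_1$-morphism $\rho=\sigma V:Z\To A^*(G)$. A one-line pushout computation gives $V(y)=I(j(y))$ for $y\in Y$, so $\rho$ extends the inclusion $j$. Choosing any bounded homogeneous section $B:X\To Z$ of $\varpi$ (provided by the open mapping theorem) I would set
$$
\Phi=\rho\circ B:X\To A^*(G)
$$
and claim this is the sought centralizer. Indeed, the defects $B(x+x')-B(x)-B(x')$ and $B(a*x)-a*B(x)$ lie in $\ker\varpi=Y$ with norms bounded by $\|B\|(\|x\|+\|x'\|)$ and $C\|a\|\|x\|$ respectively; applying the $L_1$-morphism $\rho$ and using $\rho|_Y=j$, the same bounds transfer to $\Phi(x+x')-\Phi(x)-\Phi(x')$ and $\Phi(a*x)-a\circledast\Phi(x)$, which consequently lie in $j(Y)\subset A^*(G)$, so $\Phi$ is a centralizer from $X$ to $Y$ with values in $A^*(G)$.

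Finally, to show that the extension generated by $\Phi$ is equivalent to the original, I would define
$$
u:Y\oplus_\Phi X\To Z,\qquad u(\phi,x)=B(x)+j^{-1}(\phi-\Phi(x)),
$$
where $j^{-1}(\phi-\Phi(x))\in Y$ is then viewed as an element of $Z$; this is well defined since $\phi-\Phi(x)\in j(Y)$ by the very definition of the twisted sum. The key identity $\Phi(x+x')-\Phi(x)-\Phi(x')=j\bigl(B(x+x')-B(x)-B(x')\bigr)$, together with its analogue for the $L_1$-action, produces the precise cancellation that makes $u$ linear and $L_1$-equivariant; boundedness is immediate. As $u$ makes the evident diagram with the two extensions commute, the five-lemma closes the argument. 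The main obstacle is really just the first paragraph: lacking a projective dense submodule inside an arbitrary Banach $L_1$-module $X$ (which was the engine behind Lemma~\ref{lem:all}), we are forced to enlarge the target from $Y$ to the injective ambient $A^*(G)$ and pay for it with Johnson's theorem.
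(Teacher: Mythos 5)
Your proposal is correct and follows essentially the same route as the paper's sketch: push out along the inclusion $j:Y\to A^*(G)$, use the $1$-injectivity of $A^*(G)\cong\ell_\infty(\Gamma)$ to split the pushout sequence linearly, upgrade to a module splitting via Johnson's amenability criterion, and compose the resulting $L_1$-homomorphism $\rho=\sigma V:Z\to A^*(G)$ (which is exactly the extension $J$ of $j$ mentioned in the paper) with a bounded homogeneous section $B$ of $\varpi$ to obtain $\Phi=\rho\circ B$. You simply make explicit two steps the paper leaves to the reader — the identification of the retraction with the extension of $j$, and the equivalence map $u(\phi,x)=B(x)+j^{-1}(\phi-\Phi(x))$ — and both are carried out correctly.
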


\begin{proof}[Sketch of the Proof]
Let $\xymatrix{
0\ar[r] & Y \ar[r]^-\jmath & Z \ar[r]^-\varpi & X\ar[r] &0
}
$
be an extension and consider the pushout diagram (see Section~\ref{sec:POPB})
 $$\xymatrix{
0\ar[r] & Y \ar[d]^\jmath \ar[r]^-\imath & Z  \ar[d] \ar[r]^-\pi & X\ar[r]\ar@{=}[d] &0\\
0\ar[r] & A^*(G) \ar[r] & \PO \ar[r]^-\varpi & X\ar[r] &0
}
$$
in which $\jmath$ is the hypothesized inclusion -- an injective homomorphism. Since $A(G)^*\approx \ell_\infty(\Gamma)$ is an injective Banach space the lower row splits in the linear category and the amenability of $L_1(G)$-implies that the same happens in the category of modules. Thus, there is a homomorphism $J:Z\To  A^*(G)$ extending the inclusion of $Y$ into  $A^*(G)$ (that is, $\imath J=\jmath$). Let $B:X\To Z$ be a bounded homogeneous section for the quotient map $\pi$ (that is, $\pi B={\bf I}_X$). Then the composition
$$
\xymatrixcolsep{4.5pc}
\xymatrix{
\Phi: X\ar[r]^-B_-{\text{nonlinear}} &Z \ar[r]^-J_-{\text{homomorphism}}  & A^*(G)
}
$$
is an $L_1$-centralizer from $X$ to $Y$ (check it!) and induces an extension equivalent to the starting one.
\end{proof}

\subsection{Group actions and $G$-modules}
As we already mentioned, module structures on convolution algebras are often related to actions of the underlying group. Given a topological group $G$, a (quasi) Banach $G$-module is a (quasi) Banach space $X$ together with a continuous {\em action} $G\times X\To X$, that is, a continuous representation of $G$ into the algebra of operators on $X$. When $G$ is a compact abelian group the notions of $G$-module and that of $L_1(G)$-module are more or less interchangeable for Banach spaces of functions on  $G$. Things are different, however, for nonlocally convex quasi Banach spaces and indeed the space  $L_p(G)$ for $0<p<1$ is the 
perfect example of a $G$-module whose action cannot be ``extended'' to a module structure over the convolution algebra $L_1(G)$. Our methods, based on the Fourier transform, do not apply in general to nonlocally convex spaces, the only exception being Proposition~\ref{prop:LooL1} that would require some additional work. We refer the reader to \cite{c-f,kuch} for some results on exact sequences of $G$-modules.

\subsection*{Acknowledgement} We thank \'Oscar Blasco for his explanations and patience.

\end{document}